\crefname{equation}{}{} 
\crefname{enumi}{}{} 
		\newcommand{\set}[1]{\left\lbrace #1 \right\rbrace}
		\newcommand{\parths}[1]{\left( #1 \right)}
		\newcommand{\lrangle}[1]{\left< #1 \right>}
		\newcommand{\lrbracket}[1]{\left[ #1 \right]}
		\newcommand{\scalp}[2]{(#1,#2)}
		\newcommand{\scald}[2]{#1 \cdot #2}
		\newcommand{\seq}[2]{\left(#1\right)_{#2}}
		\newcommand{\floor}[1]{\left\lfloor #1 \right\rfloor}
		\newcommand{\abs}[1]{\ensuremath{\left| #1 \right|}}
		\newcommand{\norm}[1]{\ensuremath{\left\| #1 \right\|}}
		\newcommand{\proba}[1]{\ensuremath{\mathbb P\left(#1\right)}}
		\newcommand{\probal}[2]{\ensuremath{\mathbb P_{#1}\!\left(#2\right)}}
		\newcommand{\esp}[1]{\ensuremath{\mathbb E\left[#1\right]}}
		\newcommand{\espl}[2]{\ensuremath{\mathbb E_{#1}\!\!\left[#2\right]}}
			\newcommand{\indset}[1]{\ensuremath{\mathbbm{1}_{{#1}}}}
			\newcommand{\ind}[1]{\indset{\set{#1}}}
		\newcommand{\law}[1]{\ensuremath{\mathbb P_{#1}}}
	\renewcommand{\epsilon}{\varepsilon}
	\renewcommand{\phi}{\varphi}
	\renewcommand{\tilde}{\widetilde}
	\renewcommand{\bar}{\overline}
	\DeclareMathOperator{\id}{id}
	\DeclareMathOperator{\Tr}{Tr}
		\let\div\undefined
		\DeclareMathOperator{\div}{div}
	\DeclareMathOperator{\Lip}{Lip}
	\theoremstyle{plain} 
		\newtheorem{theorem}{Theorem}[section]
		\newtheorem{proposition}[theorem]{Proposition}
		\newtheorem{lemma}[theorem]{Lemma}
	\theoremstyle{definition} 
		\newtheorem{definition}{Definition}
		\newtheorem{hypothesis}{Assumption}
			\crefname{hypothesis}{Assumption}{Assumptions}
	\theoremstyle{remark} 
		\newtheorem*{remark}{Remark}
\newcommand{\taue}{{\ensuremath{\tau^\epsilon}}}
\newcommand{\tauLe}{{\ensuremath{\tau_\Lambda^\epsilon}}}
\newcommand{\tauLze}{{\ensuremath{\tau_\Lambda(\zeta^\epsilon)}}}
\newcommand{\stopped}{\ensuremath{^{\epsilon,\taue}}}
\newcommand{\stoppedL}{\ensuremath{^{\epsilon,\tauLe}}}
\newcommand{\stoppedLz}{\ensuremath{^{\epsilon,\tauLze}}}
\newcommand{\tauei}{{\ensuremath{\tau^{\epsilon_i}}}}
\newcommand{\tauLei}{{\ensuremath{\tau_\Lambda^{\epsilon_i}}}}
\newcommand{\tauLzei}{{\ensuremath{\tau_\Lambda(\zeta^{\epsilon_i})}}}
\newcommand{\stoppedi}{\ensuremath{^{\epsilon_i,\tauei}}}
\newcommand{\stoppedLi}{\ensuremath{^{\epsilon_i,\tauLei}}}
\newcommand{\stoppedLzi}{\ensuremath{^{\epsilon_i,\tauLzei}}}
\newcommand{\tSet}{{\ensuremath{\mathbb R^+}}}
\newcommand{\xSet}{{\ensuremath{\mathbb T^d}}}
\newcommand{\vSet}{{\ensuremath{V}}}
\newcommand{\fSet}{{L^2(\mathcal M^{-1})}}
\newcommand{\scalf}[2]{\scalp{#1}{#2}_\fSet}
\newcommand{\scalrho}[2]{\scalp{#1}{#2}_{L^2_x}}
\newcommand{\ul}[1]{\underline{#1}}
\DeclareMathOperator{\psq}{quad}
\title{Diffusion limit for a stochastic kinetic problem with unbounded driving process}
\author{Shmuel Rakotonirina{-}-Ricquebourg\thanks{Univ Lyon, Université Claude Bernard Lyon 1, CNRS UMR 5208, Institut Camille Jordan, F-69622 Villeurbanne, France}}
\begin{document}
\maketitle

\paragraph{Abstract}

This paper studies the limit of a kinetic evolution equation involving a small parameter and driven by a random process which also scales with the
small parameter. In order to prove the convergence in distribution to the solution of a stochastic diffusion equation while removing a boundedness assumption on the driving random process, we adapt the method of perturbed test functions to work with stopped martingales problems.

\tableofcontents

\section{Introduction} \label{sec:introduction}

Our aim in this work is to study the scaling limit of a stochastic kinetic equation in the diffusion approximation regime, both in Partial Differential Equation (PDE) and probabilistic senses. For deterministic problems, this is a thoroughly studied field in the literature, starting historically with \cite{larsen1974asymptotic,bensoussan1979boundary}. Kinetic models with small parameters appear in various situations, for example when studying semi-conductors \cite{golse1992limite} and discrete velocity models \cite{lions1997diffusive} or as a limit of a particle system, either with a single particle \cite{goudon2009stochastic} or multiple ones \cite{poupaud2003classical}. It is important to understand the limiting equations, which are in general much easier to simulate numerically. For instance, in the asymptotic regime we study, the velocity variable disappears at the limit.

When a random term with the correct scaling (here $t/\epsilon^2$) is added to a differential equation, it is classical that, when $\epsilon \to 0$, the solution may converge in distribution to a diffusion process, which solves a Stochastic Differential Equation (SDE) driven by a white noise in time. This is a diffusive limit in the probabilistic sense. Such convergence has been proved initially by Has'minskii \cite{hasminskii1966limit,hasminskii1966stochastic} and then, using the martingale approach and perturbed test functions, in the classical article \cite{papanicolaou1977martingale} (see also \cite{kushner1984approximation,ethier1986markov,fouque2007wave,pavliotis2008multiscale,de2012diffusion}). The use of perturbed test functions in the context of PDEs with diffusive limits also concerns various situations, for instance in the context of viscosity solutions \cite{evans1989perturbed}, nonlinear Schrödinger equations \cite{marty2006splitting,de2010nonlinear,debussche2010quintic,de2012diffusion}, a parabolic PDE \cite{pardoux2003homogenization} or,, as in this paper, kinetic SPDEs \cite{debussche2011diffusion,debussche2017diffusion,debussche2020diffusion}.

In this article, we consider the following equation
\begin{gather}
	\partial_t f^\epsilon+ \frac{1}{\epsilon} \scald{a(v)}{\nabla_x f^\epsilon} = \frac{1}{\epsilon^2} Lf^\epsilon + \frac 1 \epsilon f^\epsilon \bar m^\epsilon, \label{eq:SPDE}\\
	f^\epsilon(0) = f^\epsilon_0. \label{eq:initial_condition}
\end{gather}
where $f^\epsilon$ is defined on $\tSet \times \xSet \times \vSet$, $L$ is a linear operator (see \cref{eq:def_interactions} below) and the source term $\bar m^\epsilon$ is a random process defined on $\tSet \times \xSet$ (satisfying assumptions given in \cref{subsec:driving_random_term}). The goal of this article is to study the limit $\epsilon \to 0$ of its solution $f^\epsilon$, and to generalize previous results of \cite{debussche2011diffusion}.

The solution $f^\epsilon(t,x,v)$ is interpreted as a probability distribution function of particles, having position $x$ and velocity $a(v)$ at time $t$. The variable $v$ belongs to a measure space $(\vSet, \mu)$, where $\mu$ is a probability measure. The function $a$ models the velocity.

The Bhatnagar-Gross-Krook operator $L$ expresses the particle interactions, defined on $L^1(\vSet,\mu)$ by
\begin{equation} \label{eq:def_interactions}
	Lf = \rho \mathcal M - f,
\end{equation}
where $\rho \doteq \int_\vSet f d\mu$ and $\mathcal M \in L^1(\vSet)$.

The source term $\bar m^\epsilon$ is defined as
\begin{equation} \label{eq:def_m_epsilon}
	\bar m^\epsilon(t,x) = \bar m(t/\epsilon^2,x),
\end{equation}
where $\bar m$ is a random process, not depending on $\epsilon$.

In the deterministic case $\bar m^\epsilon = 0$, such a problem occurs in various physical situations \cite{degond2000diffusion}. The density $\rho^\epsilon \doteq \int_\vSet f^\epsilon d\mu$ converges to the solution of the linear parabolic equation
\begin{equation} \label{eq:deterministic_limit}
	\partial_t \rho - \div(K \nabla \rho) = 0,
\end{equation}
on $\tSet \times \xSet$. This is a diffusive limit in the PDE sense, since the limit equation is a diffusion equation.

In this article, the diffusion limit of \cref{eq:SPDE} is considered simultaneously in the PDE and in the probabilistic sense. The main result, \cref{thm:main_result}, establishes that, under appropriate assumptions, the density $\rho^\epsilon = \int_\vSet f^\epsilon d\mu$ converges in distribution in $C([0,T],H^{-\sigma}(\xSet))$ for any $\sigma > 0$ and in $L^2([0,T],L^2(\xSet))$ to the solution of the stochastic linear diffusion equation
\begin{equation*}
	d\rho = \div (K\nabla \rho)dt + \rho \circ Q^{1/2} dW(t),
\end{equation*}
with $K$ as in \cref{eq:deterministic_limit}. The equation is written in Stratonovitch form and is driven by a cylindrical Wiener process $W$, the covariance operator $Q$ being trace-class. As usual in the context of diffusion limit, the stochastic equation involves a Stratonovitch product. The diffusive limit in the stochastic case has been first proved in \cite{debussche2011diffusion}, under a restrictive condition on the driving random term: $\bar m$ is bounded almost surely. The boundedness of $\bar m$ is a strong assumption, which is not satisfied by an Ornstein-Ulhenbeck process for instance. The contribution of this article is to relax this assumption: we prove the convergence under a moment bound assumption for the driving process.

The main tools of \cite{debussche2011diffusion} are the perturbed test function method and the concept of solution in the martingale sense. Our general strategy for the proof is similar, therefore those tools are also used here. The main novelty is the introduction of stopping times to obtain the estimates required to establish tightness and convergence. Indeed, relaxing the conditions on $\bar m$ implies that moments of the solutions are not controlled (exponential moments for $\bar m$ would be necessary). The strategy from \cite{debussche2011diffusion} needs to be substantially modified: the martingale problem approach is combined with the use of stopping times. At the limit, the stopping times persist, thus the limit processes solves the limit martingale problem only up to a stopping time. We manage to identify them nonetheless as a stopped version of the global solution.

This article is organized as follows: in \cref{sec:main_result}, we set some notation, the assumptions on the driving random term and the main result, \cref{thm:main_result}. \Cref{sec:preliminary_results} states some auxiliary results that are used in the later sections. In \cref{sec:martingale_problems}, we introduce the notion of martingale problem and the perturbed test function method that are used to prove the convergence. In \cref{sec:tightness}, we prove the tightness of the family of processes $\seq{(\rho\stoppedL,\zeta\stoppedL)}{\epsilon}$ stopped at the random time $\tauLe$. \Cref{sec:convergence} takes the limit when $\epsilon \to 0$ in the martingale problems and establishes the convergence of $\rho^\epsilon$ in $C([0,T],H^{-\sigma}(\xSet))$ . In \cref{sec:strong_tightness}, we prove the convergence in a stronger sense, namely in $L^2([0,T],L^2(\xSet))$), using an additional assumption and an averaging lemma.

\section{Assumptions and main result} \label{sec:main_result}

\begin{hypothesis} \label{hyp:L_BGK}
	The operator $L$ is defined on $L^1(V,\mu)$ by \cref{eq:def_interactions}, with $\mathcal M \in L^1(V,\mu)$ such that $\inf_V \mathcal M > 0$ and $\int_V \mathcal Md\mu = 1$.
\end{hypothesis}

Let us define the spaces $\fSet$ and $L^2_x$ and the associated inner products:
\begin{gather*}
	\fSet \doteq L^2(\xSet \times \vSet,dx \mathcal M^{-1}(v) d\mu(v)), \scalf{f}{g} \doteq \int_\xSet \int_\vSet \frac{f(x,v)g(x,v)}{\mathcal M(v)} d\mu(v) dx,\\
	{L^2_x} \doteq L^2(\xSet,dx), \scalrho{f}{g} \doteq \int_\xSet f(x)g(x)dx.
\end{gather*}
We also define the norms $\norm{\cdot}_\fSet$ and $\norm{\cdot}_{L^2_x}$ associated with these inner products.

Note that $L$ is an orthogonal projection in $\fSet$, hence
\begin{equation*}
	\forall f \in \fSet, \norm{Lf}_\fSet \leq \norm{f}_\fSet.
\end{equation*}

\begin{hypothesis} \label{hyp:a1}
	The function $a$ is bounded ($a  \in L^\infty(\vSet,\mu;\mathbb R^d)$), centered for $\mathcal M d\mu$, namely
	\begin{equation} \label{eq:aM_centered}
		\int_\vSet a(v) \mathcal M(v) d\mu(v) = 0,
	\end{equation}
	and the following matrix is symmetric and positive definite
   \begin{equation} \label{eq:def_K}
	   K \doteq \int_\vSet a(v) \otimes a(v) \mathcal M(v) d\mu(v) > 0.
   \end{equation}
\end{hypothesis}

The following assumption is not required to get the convergence in $C([0,T],H^{-\sigma}_x)$ but is used in \cref{sec:strong_tightness} to retrieve a stronger convergence (in $L^2([0,T],L^2_x$). It is exactly the assumption of Theorem 2.3 in \cite{bouchut1999averaging}.
\begin{hypothesis} \label{hyp:a2}
	We have $(V,d\mu) = (\mathbb R^n,\psi(v)dv)$ for some function $\psi \in H^1(\mathbb R^n)$, $a \in \Lip_{loc}(\mathbb R^n;\mathbb R^d)$ and there exists $C \geq 0$ and $\sigma^* \in (0,1]$ such that
	\begin{equation*}
		\forall u \in S^{d-1}, \forall \lambda \in \mathbb R, \forall \delta > 0, \int_{\lambda < \scald{a(v)}{u} < \lambda + \delta} (\abs{\psi(v)}^2 + \abs{\nabla \psi(v)}^2) dv \leq C \delta^{\sigma^*}.
	\end{equation*}
	If $\psi$ is not compactly supported, assume moreover that $\nabla a$ is globally bounded.
\end{hypothesis}

\begin{hypothesis} \label{hyp:f0}
	We have
	\begin{equation}
		\sup_{\epsilon \in (0,1)} \esp{\norm{f^\epsilon_0}_\fSet^{24}} < \infty.
	\end{equation}
	and $\rho^\epsilon_0$ converges in distribution in $L^2_x$ to $\rho_0$.
\end{hypothesis}

\begin{remark}
	The moments of order 24 in \cref{hyp:f0} are useful in \cref{subsec:perturbed_test_functions,subsec:generator_martingale}.
\end{remark}

\subsection{Driving random term} \label{subsec:driving_random_term}

Consider the normed space
\begin{equation*}
	E \doteq C^{2\floor{d/2}+4}(\xSet),
\end{equation*}
where the norm is given by
\begin{equation*}
	\norm{\cdot}_E = \sum_{\abs{\beta} \leq 2\floor{d/2}+4} \sup_{x \in \xSet} \abs{\frac{\partial^{\abs{\beta}} \cdot}{\partial x^\beta}},
\end{equation*}
where $\beta \in \mathbb N^d$, $\abs{\beta} = \sum_{i=1}^d \beta_i$ and
\begin{equation*}
	\frac{\partial^{\abs{\beta}}}{\partial x^\beta} = \frac{\partial^{\abs{\beta}}}{\partial x_1^{\beta_1} ... \partial x_d^{\beta_d}}.
\end{equation*}

\begin{hypothesis} \label{hyp:stationarity} \label{hyp:m_centered}
	The family of process $\seq{m(\cdot,n)}{n \in E}$ is a $E$-valued, càdlàg, stochastically continuous and homogeneous Markov process with initial condition $m(0,n) = n$. It admits a unique centered stationary distribution $\nu$
	\begin{equation*}
		\int_E \norm{n}_E d\nu(n) < \infty \mbox{ and } \int_E n d\nu(n) = 0.
	\end{equation*}
	The driving process $\bar m$ is the stationnary Markov process associated with $\seq{m(\cdot,n)}{n \in E}$, meaning that for all $t \in \tSet$, the distribution of $\bar m(t)$ is $\nu$. It is adapted to a filtration $\seq{\mathcal F_t}{t \in \tSet}$ satisfying the usual conditions (complete and right-continuous).
\end{hypothesis}

For $\theta \in L^1(E) \doteq L^1(E,\nu)$, set
\begin{equation*}
	\lrangle{\theta} \doteq \int_E \theta d\nu.
\end{equation*}

Note that most of the arguments below only require $\bar m(t) \in C^1(\xSet)$. However, in \cref{sec:tightness}, we use the compact embedding $H^{\floor{d/2}+2}(\xSet) \subset C^1(\xSet)$ and in \cref{subsec:covariance_operator}, we need $\bar m(t) \in C^{2s}(\xSet)$ with $H^s(\xSet) \subset C^1(\xSet)$, hence $s = \floor{d/2}+2$.

\begin{definition} \label{def:m_epsilon}
For $\epsilon > 0$, the random process $\bar m^\epsilon$ is defined by \cref{eq:def_m_epsilon}
where $\bar m$ is defined by \cref{hyp:stationarity}. Let $\mathcal F^\epsilon_t = \mathcal F_{t/\epsilon^2}$ so that $\bar m^\epsilon$ is adapted to the filtration $\seq{\mathcal F^\epsilon_t}{t \in \tSet}$.
\end{definition}

\subsubsection{Assumption on moments}

From now on, we depart from the setting of \cite{debussche2011diffusion}. In the previous works \cite{debussche2011diffusion,debussche2017diffusion}, it is assumed that there exists $C_* \in \mathbb R^+$ such that, almost surely,
\begin{equation*}
	\forall t \in \tSet, \norm{\bar m(t)}_E \leq C_*.
\end{equation*}
The main novelty of this article is that we relax this assumption into \cref{hyp:m_L2_all_starting_point,hyp:m_moments} concerning moments.
\begin{hypothesis} \label{hyp:m_moments}
	There exists $\gamma \in (4,\infty)$ such that
	\begin{equation*} \label{eq:hyp:m_moments}
		\esp{\sup_{t \in [0,1]} \norm{\bar m(t)}_E^\gamma} < \infty.
	\end{equation*}
\end{hypothesis}

The condition $\gamma > 4$ is required below in \cref{hyp:m_L2_all_starting_point}, where we also assume that the moments on $m(t,n)$ depend polynomially on $n$.
\begin{hypothesis} \label{hyp:m_L2_all_starting_point}
	There exists $b \in [0,\frac \gamma 2 - 2)$ such that
	\begin{equation*}
		\sup_{n \in E} \sup_{t \in \tSet} \frac{\esp{\norm{m(t,n)}_E^2}^{\frac 1 2}}{1 +  \norm{n}_E^b} < \infty,
	\end{equation*}
	and such that $\nu$ has a finite $8(b+2)$-order moment, namely
	\begin{equation*}
		\int_E \norm{n}_E^{8(b+2)} d\nu(n) < \infty.
	\end{equation*}
\end{hypothesis}

For instance, if $m$ is an Ornstein-Uhlenbeck process
\begin{equation*}
dm(t) = -\theta m(t) dt + \sigma dW(t),
\end{equation*}
with $W$ a $E$-valued Wiener process, then $m$ satisfies \cref{hyp:m_moments,hyp:m_L2_all_starting_point}.

Moreover, any process satisfying the boundedness assumption in \cite{debussche2011diffusion} also satisfies \cref{hyp:m_L2_all_starting_point,hyp:m_moments}.

\subsubsection{Mixing property}

\begin{hypothesis}[Mixing property] \label{hyp:mixing}
	There exists a nonnegative integrable function $\gamma_{mix} \in L^1(\tSet)$ such that, for all $n_1, n_2 \in E$, there exists a coupling $(m^*(\cdot,n_1),m^*(\cdot,n_2))$ of $(m(\cdot,n_1),m(\cdot,n_2))$ such that
	\begin{equation*}
		\forall t \in \tSet, \esp{\norm{m^*(t,n_1) - m^*(t,n_2)}_E^2}^{1/2} \leq \gamma_{mix}(t) \norm{n_1-n_2}_E.
	\end{equation*}
\end{hypothesis}
Typically, $\gamma_{mix}$ is expected to be of the form $\gamma_{mix}(t) = C_{mix} e^{-\beta_{mix} t}$ for some $\beta_{mix} >0$. In the example where $m$ is an Ornstein-Uhlenbeck process, consider $m^*(\cdot,n_1)$ and $m^*(\cdot,n_2)$ driven by the same Wiener process $W$. Owing to Gronwall's Lemma, it is straightforward to prove that this coupling satisfies \cref{hyp:mixing} and that $\gamma_{mix}$ decays exponentially fast.

We also need \cref{hyp:B_continuity,hyp:B_subpolynomial} concerning the transition semi-group associated to the homogeneous Markov process $\seq{m(\cdot,n)}{n \in E}$. Since those assumptions are quite technical, we postpone their statement in \cref{subsubsec:resolvent}.

\subsection{Main result}

For $x,y \in \xSet$, define the kernel
\begin{equation} \label{eq:def_k}
	k(x,y) = \esp{\int_{\mathbb R} \bar m(0)(x) \bar m(t)(y) dt},
\end{equation}
and for $f \in L^2_x$ and $x \in \xSet$, let us recall from \cite{debussche2011diffusion}
\begin{equation} \label{eq:def_Q}
	Qf(x) = \int_\xSet k(x,y)f(y)dy.
\end{equation}

\begin{theorem} \label{thm:main_result}
	Let \cref{hyp:L_BGK,hyp:a1,hyp:f0,hyp:stationarity,hyp:m_centered,hyp:m_moments,hyp:mixing,hyp:m_L2_all_starting_point,hyp:B_continuity,hyp:B_subpolynomial} be satisfied. 
	Let $W$ be a cylindrical Wiener process on $L^2_x$, $\rho_0$ be a random variable in $L^2_x$ and $\rho$ be the weak solution of the linear stochastic diffusion equation
	\begin{equation} \label{eq:limit_equation_strato}
		d\rho = \div (K\nabla \rho)dt + \rho Q^{1/2} \circ dW(t),
	\end{equation}
	with initial condition $\rho(0) = \rho_0$, in the sense of \cref{def:limit_equation}. Also assume that $\rho^\epsilon(0)$ converges in distribution to $\rho_0$ in $L^2_x$. Then, for all $\sigma > 0$ and $T > 0$, the density $\rho^\epsilon$ converges in distribution in $C([0,T],H^{-\sigma}_x)$ to $\rho$.

	Let \cref{hyp:L_BGK,hyp:a1,hyp:a2,hyp:f0,hyp:stationarity,hyp:m_centered,hyp:m_moments,hyp:mixing,hyp:m_L2_all_starting_point,hyp:B_continuity,hyp:B_subpolynomial} be satisfied. Then $\rho^\epsilon$ also converges in distribution in $L^2([0,T],L^2_x)$ to $\rho$.
\end{theorem}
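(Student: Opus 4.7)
The plan is to follow the martingale-problem and perturbed test function framework, with the crucial twist that all estimates are carried out on processes stopped at $\tau^\epsilon_\Lambda$, a random time controlling suitable norms of $(f^\epsilon,\bar m^\epsilon)$ up to a cutoff $\Lambda$. First I would rewrite the pair $(\rho^\epsilon,\zeta^\epsilon)$, where $\zeta^\epsilon$ encodes the driving process $\bar m^\epsilon$, as an $\mathcal F^\epsilon_t$-adapted Markov process and identify its infinitesimal generator $\mathcal L^\epsilon$, which contains singular contributions of orders $\epsilon^{-2}$ and $\epsilon^{-1}$. For smooth test functions $\phi$ on $H^{-\sigma}_x$ one then constructs a perturbation $\phi^\epsilon = \phi + \epsilon \phi_1 + \epsilon^2 \phi_2$ designed so that the singular parts of $\mathcal L^\epsilon \phi^\epsilon$ cancel and $\mathcal L^\epsilon \phi^\epsilon$ converges to $\mathcal L \phi$, where $\mathcal L$ is the generator associated to $d\rho = \div(K\nabla \rho)dt + \rho \circ Q^{1/2} dW$. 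The stopping times turn every moment manipulation needed to define $\phi_1,\phi_2$ and to control $\mathcal L^\epsilon \phi^\epsilon - \mathcal L\phi$ into bona-fide bounded operations, which is essential since under \cref{hyp:m_moments,hyp:m_L2_all_starting_point} only polynomial moments of $\bar m$ are available.

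Next, for each fixed $\Lambda$, I would establish tightness of the stopped family $\seq{(\rho\stoppedL,\zeta\stoppedL)}{\epsilon}$ in $C([0,T],H^{-\sigma}_x) \times D([0,T],E)$ via an Aldous-type criterion, the uniform-in-$\epsilon$ bounds coming from \cref{hyp:f0} together with the moment and mixing assumptions on $\bar m$ and the technical resolvent-type assumptions on the transition semigroup. Along any convergent subsequence, the stopped perturbed martingale problem passes to the limit and identifies the limit law, up to a limiting stopping time $\tau_\Lambda$, as a weak solution of \cref{eq:limit_equation_strato} stopped at $\tau_\Lambda$. Weak uniqueness for the limit SPDE then upgrades subsequential convergence to convergence of the entire family up to time $\tau_\Lambda$.

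The final and most delicate step is removing the stopping time. I would prove that
\begin{equation*}
\lim_{\Lambda \to \infty} \sup_{\epsilon \in (0,1)} \proba{\tau^\epsilon_\Lambda \leq T} = 0,
\end{equation*}
via Markov inequalities applied to the polynomial moment bounds on $\rho^\epsilon$ and $\bar m^\epsilon$. This lets one transfer the convergence of the stopped processes to convergence of the unstopped $\rho^\epsilon$ in $C([0,T],H^{-\sigma}_x)$ and, using the compatibility of the stopped limits with the global weak solution of \cref{eq:limit_equation_strato}, to identify the limit as $\rho$. This is the point where the strategy of \cite{debussche2011diffusion} is substantially modified and where the main technical obstacle lies, because one must simultaneously verify that the limit stopping times $\tau_\Lambda$ diverge fast enough to cover $[0,T]$ and that the candidate limit coincides with the unique weak solution of the global SPDE.

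For the stronger convergence under the additional \cref{hyp:a2}, the idea is to apply the averaging lemma of Bouchut-Desvillettes to gain fractional Sobolev regularity in $x$ for $\rho^\epsilon$ from the oscillations of $f^\epsilon$ in $v$ in the kinetic equation \cref{eq:SPDE}. Combined with the uniform $\fSet$-estimates obtained along the tightness argument and the weak convergence in $C([0,T],H^{-\sigma}_x)$ already established, a relative compactness and interpolation argument upgrades the convergence of $\rho^\epsilon$ to $\rho$ to hold in $L^2([0,T],L^2_x)$, finishing the proof.
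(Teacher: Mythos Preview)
Your overall architecture is right, but two of the load-bearing steps are mis-specified in ways that would make the argument fail.

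\textbf{The stopping time.} You describe $\tau^\epsilon_\Lambda$ as ``controlling suitable norms of $(f^\epsilon,\bar m^\epsilon)$''. The paper's stopping time is \emph{not} a hitting time of $f^\epsilon$: it is $\tau^\epsilon_\Lambda=\tau^\epsilon\wedge\tau_\Lambda(\zeta^\epsilon)$, where $\tau^\epsilon=\inf\{t:\|\bar m^\epsilon(t)\|_E>\epsilon^{-\alpha}\}$ and $\zeta^\epsilon(t)=\epsilon^{-1}\int_0^t\bar m^\epsilon(s)\,ds$. The point is that $\zeta^\epsilon$ depends only on the driving noise, so the a priori $L^2$ estimate on $f^\epsilon$ can be closed against a quantity that is controlled \emph{independently} of $f^\epsilon$. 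Concretely, the energy estimate is obtained by differentiating $\|f^\epsilon\|^2$ in a time-dependent weighted space $L^2(\mathcal M^\epsilon(t)^{-1})$ with $\mathcal M^\epsilon(t,x,v)=e^{2\zeta^\epsilon(t,x)}\mathcal M(v)$; this weight is engineered so that the dangerous $\frac{1}{\epsilon}f^\epsilon\bar m^\epsilon$ term cancels exactly, and what remains is bounded by $\|\zeta^\epsilon\|_{C^1_x}$, hence by $\Lambda$ up to $\tau_\Lambda(\zeta^\epsilon)$. If you instead stop when $\|f^\epsilon\|$ gets large, the estimate becomes circular and you have no independent control on $\proba{\tau^\epsilon_\Lambda\le T}$.

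\textbf{Removal of the stopping time.} Your proposed route, proving $\lim_{\Lambda\to\infty}\sup_{\epsilon\in(0,1)}\proba{\tau^\epsilon_\Lambda\le T}=0$ via Markov on polynomial moments of $\rho^\epsilon$, is exactly what is \emph{unavailable} here: the whole difficulty of relaxing boundedness of $\bar m$ is that unstopped moments of $\rho^\epsilon$ are not controlled (exponential moments of $\bar m$ would be needed). The paper never proves such a uniform-in-$\epsilon$ statement. Instead it argues in two stages: first, $\zeta^\epsilon$ converges in distribution in $C_TC^1_x$ to a $Q$-Wiener process $\zeta$ (this uses only test functions depending on $z$, so only the mild stopping $\tau^\epsilon\to\infty$ is needed); second, for $\Lambda$ outside a countable set one has $\tau_\Lambda(\zeta^\epsilon)\Rightarrow\tau_\Lambda(\zeta)$, and then
\[
\limsup_{\epsilon\to0}\bigl|\esp{\Phi(\rho^\epsilon,\zeta^\epsilon)}-\esp{\Phi(\rho,\zeta)}\bigr|\ \lesssim_\Phi\ \proba{\tau_\Lambda(\zeta)\le T}\xrightarrow[\Lambda\to\infty]{}0.
\]
So the removal of the cutoff happens \emph{at the level of the limit}, not uniformly in $\epsilon$. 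This also explains why one must track the pair $(\rho^\epsilon,\zeta^\epsilon)$: $\tau_\Lambda(\zeta)$ has to be a stopping time for the limiting filtration, which it is only because $\zeta$ is part of the limit process. Finally, identifying the stopped limit with $\rho^{\tau_\Lambda(\zeta)}$ for the global solution $\rho$ requires an explicit extension-after-a-stopping-time construction (\`a la Stroock--Varadhan), not merely weak uniqueness of the SPDE; your sketch skips this step.

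Your plan for the $L^2_TL^2_x$ upgrade via the averaging lemma and the $C_TH^{-\sigma}_x$ modulus-of-continuity bound is essentially what the paper does.
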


The noise in \cref{eq:limit_equation_strato} involves a Stratonovitch product, which is usual in the context of diffusion limit. Written with a Itô product, the limit becomes
\begin{equation} \label{eq:limit_equation_rho}
	d\rho = \div (K\nabla \rho)dt + \frac 1 2 F \rho dt + \rho Q^{1/2} dW(t),
\end{equation}
where $F$ is the trace of $Q$, namely
\begin{equation} \label{eq:def_F}
	F(x) = k(x,x).
\end{equation}
This equation is well-posed, as discussed after \cref{def:limit_equation}.

This is the same limit than in \cite{debussche2011diffusion}. Compared with \cite{debussche2011diffusion}, we obtain a stronger convergence result, namely a convergence in $L^2([0,T],L^2_x)$ under additional assumptions.

\subsection{Strategy of the proof of \cref{thm:main_result}}

A standard strategy to prove the convergence of $\rho^\epsilon$ when $\epsilon \to 0$ (see \cite{debussche2011diffusion,debussche2017diffusion,debussche2020diffusion}) is first to establish the tightness of the family $\seq{\rho^\epsilon}{\epsilon>0}$, and then the uniqueness of the limit point of this family and solves \cref{eq:limit_equation}. The tightness usually comes from estimates on moments of trajectories. It is the case in \cite{debussche2011diffusion}, where the boundedness of $\bar m$ is used to get an estimate on $\esp{\sup_{t \in [0,T]} \norm{f^\epsilon(t)}_{L^2}^p}$ for all $T > 0$ and $p \geq 1$. However, without an almost sure bound on $\bar m$, we do not manage to get this estimate. Instead, we introduce a stopping time $\tauLe$ depending on a parameter $\Lambda$ such that the estimate holds for $f\stoppedL \doteq f^\epsilon(t \wedge \tauLe)$.

More precisely, define a first stopping time
\begin{equation} \label{eq:def_taue}
	\taue \doteq \inf \set{t \in \tSet \mid \norm{\bar m^\epsilon(t)}_E > \epsilon^{-\alpha}},
\end{equation}
for some parameter $\alpha$. Let $C^1_x \doteq C^1(\xSet)$ and define the hitting time of a threshold $\Lambda$ by $z \in C([0,T],C^1_x)$
\begin{equation} \label{eq:def_tauLze}
	\tau_\Lambda(z) \doteq \inf\set{t \in \tSet \mid \norm{z(t)}_{C^1_x} \geq \Lambda}.
\end{equation}
Then, define the auxiliary process
\begin{equation} \label{eq:def_zeta_epsilon}
	\zeta^\epsilon(t) = \frac {1} \epsilon \int_0^t \bar m^\epsilon(s) ds = \epsilon \int_0^{t/\epsilon^2} \bar m(s) ds \in E \subset C^1_x.
\end{equation}
Observe that $\frac 1 \epsilon \bar m^\epsilon = \partial_t \zeta^\epsilon$.

We can now define
\begin{equation} \label{eq:def_tauLe}
	\tauLe \doteq \taue \wedge \tauLze.
\end{equation}
The times $\taue$ and $\tauLze$ have different asymptotic behaviors. On the one hand, \cref{lem:taue_to_infty} states that $\taue \to \infty$ in probability. On the other hand, \cref{subsec:convergence_of_the_auxiliary_process} establishes that $\zeta^\epsilon$ converges in distribution, when $\epsilon \to 0$, to a Wiener process $\zeta$. Thus, we prove that, for all $\Lambda$ outside of a countable set, $\tauLze$ converges in distribution to $\tau_\Lambda(\zeta)$. Hence, $\tauLe$ converges in distribution to $\tau_\Lambda(\zeta)$.

In \cref{subsec:bound_L2}, we prove an estimate on $f\stoppedL$ depending only on $T$, $\Lambda$ and $f^\epsilon_0$. This estimate leads to prove the tightness of the family of stopped processes $\seq{\rho\stoppedL}{\epsilon>0}$. Then, we identify the limit points of this family using the notions of martingale problems and perturbed test functions, and we deduce the convergence of the stopped process to a limit $\rho_\Lambda$.

Since $\taue \to \tau_\Lambda(\zeta)$ and we expect $\rho^\epsilon \to \rho$, it is convenient to study the process $(\rho^\epsilon,\zeta^\epsilon)$ to be able to write the limit of $\rho\stoppedL$ as $\rho^{\tau_\Lambda(\zeta)}$. Moreover, to prove that $\rho^\epsilon$ indeed converges to $\rho$ and that $\rho$ satisfies \cref{eq:limit_equation_strato}, we need $\tau_\Lambda(\zeta)$ to be a stopping time for the limit process. Thus, we need to consider the convergence in distribution of the couple $(\rho^\epsilon,\zeta^\epsilon)$ in $C([0,T],H^{-\sigma}_x) \times C([0,T],C^1_x)$ to the solution $(\rho,\zeta)$ of
\begin{equation} \label{eq:limit_equation}
	\left\{
	\begin{split}
	d\rho &= \div (K\nabla \rho)dt + \frac 1 2 F \rho dt + \rho Q^{1/2} dW(t)\\
	d\zeta &= Q^{1/2} dW(t),
	\end{split}
	\right.
\end{equation}
with initial condition $\rho(0) = \rho_0$ and $\zeta(0) = 0$. In this framework, $\tauLze$ is a stopping time for $(\rho^\epsilon,\zeta^\epsilon)$ and $\tau_\Lambda(\zeta)$ is a stopping time for the limit $(\rho,\zeta)$.

We first state in \cref{sec:preliminary_results} some consequences of our assumptions in \cref{subsec:driving_random_term} and introduce the stopping times. In \cref{sec:martingale_problems}, we define the martingale problem solved by the process $(\rho^\epsilon,\zeta^\epsilon)$ and set up the perturbed test functions strategy.

In \cref{sec:tightness}, we prove the tightness of the stopped process in $C([0,T],H^{-\sigma}_x) \times C([0,T],C^1_x)$, using the perturbed test functions of \cref{sec:martingale_problems}. Then, in \cref{sec:convergence}, we establish the convergence of the martingale problems when $\epsilon \to 0$ to identify the limit as a solution of a stopped martingale problem, and deduce the convergence of the original process $(\rho^\epsilon,\zeta^\epsilon)$ in $C([0,T],H^{-\sigma}_x) \times C([0,T],C^1_x)$.

In \cref{sec:strong_tightness}, we prove the tightness of the stopped process in $L^2([0,T],L^2_x)$ under the assumptions of \cref{thm:main_result}, using an averaging lemma. Combined with the previous results, we deduce the convergence of the original process $\rho^\epsilon$ in $L^2([0,T],L^2_x)$.

\section{Preliminary results} \label{sec:preliminary_results}

\subsection{Resolvent operator} \label{subsec:pseudolin_pseudoquad}

\subsubsection{Additional assumptions} \label{subsubsec:resolvent}

Denote by $\seq{P_t}{t \in \tSet}$ the transition semi-group on $E$ associated to the homogeneous Markov process $\bar m$ and let $B$ denote its infinitesimal generator
\begin{equation*}
	\forall n \in E, B\theta(n) = \lim_{t \to 0} \frac{P_t \theta(n) - \theta(n)}{t}.
\end{equation*}

The usual framework for Markov processes and their transition semi-groups is to consider continuous bounded test functions $\theta \in C_b(E)$, so that $P_t \theta$ is a contraction semi-group (see \cite{ethier1986markov}). Here, we need unbounded test functions (see \cref{subsec:perturbed_test_functions}), thus consider the action of the semi-group on $C(E) \cap L^1(E)$.
We also consider the domain of $B$
\begin{equation*}
	D(B) \doteq \set{\theta \in C(E) \cap L^1(E) \mid \forall n \in E, B\theta(n) \mbox{ exists and } B\theta \in C(E) \cap L^1(E)}.
\end{equation*}

We need a continuity property for the semi-group $\seq{P_t}{t \in \tSet}$. Define first the resolvent operator.
\begin{definition} \label{def:resolvent}
	For $\lambda \in [0,\infty)$ and $\theta \in C(E) \cap L^1(E)$ such that $\int_0^\infty \abs{P_t \theta(n)} dt < \infty$ for all $n \in E$, define the resolvent: for all $n \in E$
	\begin{equation*}
		R_\lambda \theta(n) \doteq \int_0^\infty e^{-\lambda t} P_t \theta (n) dt.
	\end{equation*}
\end{definition}

\begin{hypothesis} \label{hyp:B_continuity}
	The family $\seq{P_t}{t \in \tSet}$ is a semi-group on $C(E) \cap L^1(E)$. Moreover, for all $\seq{\lambda_i}{1 \leq i \leq 4} \in [0,\infty)^4$ and $\seq{\theta_i}{1 \leq i \leq 4} \in C(E) \cap L^1(E)^4$ such that $R_{\lambda_i} \theta_i$ are well-defined by \cref{def:resolvent}, we have
	\begin{equation*}
		\forall j \in \llbracket 1 , 4 \rrbracket, \Pi_{i=1}^{j} R_{\lambda_i} \theta_i \in D(B).
	\end{equation*}
	In addition, we assume that for $\lambda \in [0,\infty)$ and $\theta$ such that $R_\lambda \theta \in D(B)$, the commutation formula holds
	\begin{equation*}
		B \int_0^\infty e^{-\lambda t} P_t \theta(\cdot) dt = \int_0^\infty e^{-\lambda t} B P_t \theta(\cdot) dt.
	\end{equation*}
\end{hypothesis}

The second part of \cref{hyp:B_continuity} is satisfied under a continuity property for the semi-group $\seq{P_t}{t \in \tSet}$. Indeed, consider the following computations
\begin{multline*}
	\lim_{s \to 0} \frac{P_s - \id}{s} \int_0^\infty e^{-\lambda t} P_t \theta(\cdot) dt = \lim_{s \to 0} \int_0^\infty e^{-\lambda t} \frac{P_{s+t} - P_t}{s} \theta(\cdot) dt\\
	= \int_0^\infty e^{-\lambda t} \lim_{s \to 0} \frac{P_{s+t} - P_t}{s} \theta(\cdot) dt
\end{multline*}
To justify the first equality, it is sufficient to assume point-wise continuity of $P_t$ for all $t$ on the space $C(E) \cap L^1(E)$. The second equality is a consequence of the bounded convergence theorem.

Note that by means of \cref{hyp:B_continuity}, $-R_0$ is the inverse of $B$. Indeed, for $\theta$ such that $R_0 \theta \in D(B)$, we have
\begin{equation*}
	B \int_0^\infty P_t \theta(\cdot)dt = \int_0^\infty B P_t \theta(\cdot)dt = \int_0^\infty \partial_t P_t \theta(\cdot) dt = - \theta.
\end{equation*}

We sometimes use functions having at most polynomial growth. Our last assumption is that $B$ preserves this property.
\begin{hypothesis} \label{hyp:B_subpolynomial}
	If $\theta \in D(B)$ has at most polynomial growth, then $B\theta$ has at most polynomial growth with the same degree. Namely, there exists $C_B \in (0,\infty)$ such that, for any $\theta \in D(B)$ and $k \in \mathbb N$,
	\begin{equation*}
		\sup_{n \in E} \frac{\abs{B\theta(n)}}{1 + \norm{n}_E^k} \leq C_B \sup_{n \in E} \frac{\abs{\theta(n)}}{1 + \norm{n}_E^k}
	\end{equation*}
\end{hypothesis}

\subsubsection{Results on the resolvent operator} \label{subsubsec:pseudolin_pseudoquad}

We introduce a class of pseudo-linear (respectively pseudo-quadratic) functions, which behave like linear (respectively quadratic) functions for our purposes.
\begin{definition} \label{def:pseudo-linear_pseudo-quadratic}
	A function $\theta \in \Lip(E)$ such that $\lrangle{\theta} = 0$, is called pseudo-linear. Denote by $\lrbracket{\theta}_{\Lip}$ its Lipschitz constant.

	A function $\theta : E \to \mathbb R$ is called pseudo-quadratic if there exists a function $b_\theta : E^2 \to \mathbb R$ satisfying
	\begin{itemize}
		\item for all $n \in E$, $\theta(n) = b_\theta(n,n)$,
		\item for all $n \in E$, $b_\theta(n,\cdot)$ and $b_\theta(\cdot,n)$ are Lipschitz continuous,
		\item the mappings $n \mapsto \lrbracket{b_\theta(n,\cdot)}_{\Lip}$ and $n \mapsto \lrbracket{b_\theta(\cdot,n)}_{\Lip}$ have at most linear growth.
	\end{itemize}

	If $\theta$ is a pseudo-quadratic function, then let
	\begin{equation*}
		\lrbracket{\theta}_{\psq} \doteq \sup_{n_1 \neq n_2 \in E} \frac{\abs{\theta(n_2) - \theta(n_1)}}{(1 + \norm{n_1}_E + \norm{n_2}_E) \norm{n_2-n_1}_E} < \infty.
	\end{equation*}
\end{definition}

Let $E^*$ denote the dual space of $E$. Any element $\theta \in E^*$ is pseudo-linear.

A consequence of the mixing property (\cref{hyp:mixing}) is that the pseudo-linear and the pseudo-quadratic functions introduced in \cref{def:pseudo-linear_pseudo-quadratic} satisfy the conditions of \cref{def:resolvent}.
\begin{lemma} \label{lem:Pt_lip_Pt_quad}
	Let $\theta$ be a pseudo-linear function. Then, for all $\lambda \geq 0$, $R_\lambda \theta$ is well-defined and is pseudo-linear. Moreover, let
	\begin{equation*}
	    C_\lambda = \int_0^\infty e^{-\lambda t} \gamma_{mix}(t) dt \mbox{ and } C'_\lambda = \parths{1 \vee \int \norm{n_2}_E d\nu(n_2)} C_\lambda.
	\end{equation*}
	Then, we have
	\begin{equation*}
		\lrbracket{R_\lambda \theta}_{\Lip} \leq \lrbracket{\theta}_{\Lip} C_\lambda,
	\end{equation*}
	and for $n \in E$,
	\begin{equation} \label{eq:lem:Pt_lip_Pt_quad}
		\abs{R_\lambda \theta(n)} \leq C'_\lambda \lrbracket{\theta}_{\Lip} (1 + \norm{n}_E).
	\end{equation}

	Let $\theta$ be a pseudo-quadratic function. Then, for $\lambda \geq 0$, $R_\lambda\lrbracket{\theta - \lrangle{\theta}}$ is well-defined. Moreover, there exists $C''_\lambda \in (0,\infty)$ depending only on $C_\lambda$ and $b$ such that, for $n \in E$,
	\begin{equation*}
		\abs{R_\lambda\lrbracket{\theta - \lrangle{\theta}}(n)} \leq C''_\lambda \lrbracket{\theta}_{\psq} (1 + \norm{n}_E^{b+1})
	\end{equation*}
	where $b$ is defined in \cref{hyp:m_L2_all_starting_point}.
\end{lemma}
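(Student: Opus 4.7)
My plan is to derive both estimates from the mixing coupling of \cref{hyp:mixing}, after rewriting $P_t\theta(n)$ as an average of differences using the stationarity of $\nu$.

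For the pseudo-linear case, the key observation is that since $\nu$ is stationary, $\langle P_t \theta \rangle = \langle \theta \rangle = 0$, which lets me write
\begin{equation*}
P_t \theta(n) = P_t\theta(n) - \langle P_t\theta\rangle = \int_E \bigl( P_t\theta(n) - P_t\theta(n')\bigr) d\nu(n').
\end{equation*}
Using the coupling $(m^*(t,n), m^*(t,n'))$ and the Lipschitz bound on $\theta$, I get
\begin{equation*}
|P_t\theta(n)-P_t\theta(n')| \leq [\theta]_{\Lip}\,\esp{\norm{m^*(t,n)-m^*(t,n')}_E} \leq [\theta]_{\Lip}\,\gamma_{mix}(t)\,\norm{n-n'}_E,
\end{equation*}
which yields both the Lipschitz bound $[R_\lambda\theta]_{\Lip}\le[\theta]_{\Lip}C_\lambda$ after integrating in $t$ against $e^{-\lambda t}$, and the pointwise bound by estimating $\int\norm{n-n'}_E d\nu(n') \le (1\vee\int\norm{n'}_Ed\nu(n'))(1+\norm{n}_E)$. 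Stationarity also gives $\langle R_\lambda\theta\rangle=0$, so $R_\lambda\theta$ is itself pseudo-linear. Well-definedness follows because $\gamma_{mix}\in L^1(\tSet)$.

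For the pseudo-quadratic case, I first use the telescoping identity
\begin{equation*}
b_\theta(m_1,m_1)-b_\theta(m_2,m_2) = \bigl(b_\theta(m_1,m_1)-b_\theta(m_2,m_1)\bigr)+\bigl(b_\theta(m_2,m_1)-b_\theta(m_2,m_2)\bigr),
\end{equation*}
together with the at most linear growth of $n\mapsto[b_\theta(n,\cdot)]_{\Lip}$ and $n\mapsto[b_\theta(\cdot,n)]_{\Lip}$, to check that the quantity $[\theta]_{\psq}$ is indeed finite and that $|\theta(m_1)-\theta(m_2)|\leq [\theta]_{\psq}(1+\norm{m_1}_E+\norm{m_2}_E)\norm{m_1-m_2}_E$. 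By stationarity, $\int P_t\theta\,d\nu = \langle\theta\rangle$, so I again write
\begin{equation*}
P_t[\theta-\langle\theta\rangle](n) = \int_E \esp{\theta(m^*(t,n))-\theta(m^*(t,n'))}\,d\nu(n').
\end{equation*}
Applying Cauchy--Schwarz to the inner expectation and invoking the coupling bound on $\esp{\norm{m^*(t,n)-m^*(t,n')}_E^2}^{1/2}\leq\gamma_{mix}(t)\norm{n-n'}_E$ together with the $L^2$-moment control from \cref{hyp:m_L2_all_starting_point}, namely $\esp{\norm{m(t,n)}_E^2}^{1/2}\lesssim 1+\norm{n}_E^b$, I obtain
\begin{equation*}
|P_t[\theta-\langle\theta\rangle](n)| \lesssim [\theta]_{\psq}\,\gamma_{mix}(t)\int_E (1+\norm{n}_E^b+\norm{n'}_E^b)\,\norm{n-n'}_E\,d\nu(n').
\end{equation*}
Bounding $\norm{n-n'}_E\leq\norm{n}_E+\norm{n'}_E$ and using the finiteness of $\int\norm{n'}_E^{b+1}d\nu(n')$ (which is ensured by the $8(b+2)$-moment assumption in \cref{hyp:m_L2_all_starting_point}), this integrates to $O\bigl((1+\norm{n}_E^{b+1})\bigr)$. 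Integrating once more in $t$ against $e^{-\lambda t}$ gives the claimed bound with $C''_\lambda$ depending only on $C_\lambda$ and moments of $\nu$ controlled by $b$.

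The main technical obstacle is the bookkeeping in the pseudo-quadratic case: one must verify that the Cauchy--Schwarz step combined with the polynomial $L^2$ moment bound produces exactly the growth exponent $b+1$, and that the $\nu$-integrals involving $\norm{n'}_E^b\norm{n-n'}_E$ are controlled by the moment assumption on $\nu$. Everything else is a routine application of the coupling and the stationarity of $\nu$.
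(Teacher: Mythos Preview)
Your proposal is correct and follows essentially the same approach as the paper: both arguments use the stationarity of $\nu$ to rewrite $P_t\theta(n)-\lrangle{\theta}$ as an integral of differences $P_t\theta(n)-P_t\theta(n')$ against $d\nu(n')$, bound these differences via the coupling of \cref{hyp:mixing} (together with Cauchy--Schwarz and the $L^2$ moment bound of \cref{hyp:m_L2_all_starting_point} in the pseudo-quadratic case), and then integrate in $t$ against $e^{-\lambda t}$. The only cosmetic differences are that the paper verifies $\lrangle{R_\lambda\theta}=0$ by an explicit Fubini computation and skips your preliminary telescoping check that $\lrbracket{\theta}_{\psq}<\infty$.
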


\begin{proof}
	Let $n_1, n_2 \in E$ and denote by $(m^*(\cdot,n_1),m^*(\cdot,n_2))$ the coupling introduced in \cref{hyp:mixing}. If $\theta$ is Lipschitz continuous, then for all $t \in \tSet$, \cref{hyp:mixing} leads to
	\begin{equation} \label{eq1:lem:Pt_lip_Pt_quad}
		\abs{P_t\theta(n_1) - P_t\theta(n_2)} \leq \lrbracket{\theta}_{\Lip} \gamma_{mix}(t) \norm{n_1 - n_2}_E.
	\end{equation}
	Recall that $P_t$ is $\nu$-invariant, i.e. $\nu P_t = \nu$, hence we have
	\begin{align} 
		\abs{P_t\theta(n_1) - \lrangle \theta}
			&= \abs{\int_E \parths{P_t\theta(n_1) - P_t\theta(n_2)} d\nu(n_2)} \nonumber\\
			&\leq \gamma_{mix}(t) \lrbracket{\theta}_{\Lip} \int_E \norm{n_1 - n_2}_E d\nu(n_2) \nonumber\\
			&\leq \parths{1 \vee \int \norm{n_2}_E d\nu(n_2)} \gamma_{mix}(t) \lrbracket{\theta}_{\Lip} (1 + \norm{n_1}_E). \label{eq2:lem:Pt_lip_Pt_quad}
	\end{align}

	Assume that $\theta$ is pseudo-linear. Since $\lrangle{\theta} = 0$, \cref{eq2:lem:Pt_lip_Pt_quad} implies that $R_\lambda \theta$ is well-defined for all $\lambda \in [0,\infty)$, and \cref{eq1:lem:Pt_lip_Pt_quad} implies that $R_\lambda \theta$ is $\parths{\lrbracket{\theta}_{\Lip} C_\lambda}$-Lipschitz continuous. Moreover, by means of Fubini's Theorem,
	\begin{align*}
		\int_E R_\lambda \theta(n) d\nu(n)
			&= \int_E \int_0^\infty e^{-\lambda t} P_t \theta(n) dt d\nu(n)\\
			&= \int_0^\infty e^{-\lambda t} \int_E P_t \theta(n) d\nu(n) dt\\
			&= \int_0^\infty e^{-\lambda t} \int_E \theta(n) d\nu(n) dt\\
			&= 0.
	\end{align*}
	This concludes the proof that $R_\lambda \theta$ is pseudo-linear. Finally, \cref{eq:lem:Pt_lip_Pt_quad} is a straightforward consequence of \cref{eq2:lem:Pt_lip_Pt_quad}.

	Now assume that $\theta$ is a pseudo-quadratic function: using \cref{hyp:m_L2_all_starting_point,hyp:mixing} and Cauchy-Schwarz inequality, we have for all $n_1, n_2 \in E$
	\begin{equation*}
		\abs{P_t\theta(n_1) - P_t\theta(n_2)} \leq C \lrbracket \theta_{\psq} \parths{1 + \norm{n_1}_E^b + \norm{n_2}_E^b} \gamma_{mix}(t) \norm{n_1 - n_2}_E,
	\end{equation*}
	for some constant $C$ depending on $b$. Since $P_t$ is $\nu$-invariant and $\nu$ has a finite moment of order $b+1$ by \cref{hyp:m_L2_all_starting_point}, we get
	\begin{equation*}
		\abs{P_t\theta(n) - \lrangle \theta} \leq C' \lrbracket \theta_{\psq} \gamma_{mix}(t) (1 + \norm n_E^{b+1}),
	\end{equation*}
	for some constant $C'$ depending on $b$. Integrating with respect to $t$ gives the announced result.
\end{proof}

Let us recall notation from \cite{debussche2011diffusion}. For $\rho,\rho' \in {L^2_x}$, denote by $\psi_{\rho,\rho'} \in E^*$ the continuous linear form
\begin{equation*}
	\forall n \in E, \psi_{\rho,\rho'}(n) \doteq \scalrho{\rho n}{\rho'}
\end{equation*}
The linear form $\psi_{\rho,\rho'}$ is pseudo-linear and
\begin{equation*}
	\lrbracket{\psi_{\rho,\rho'}}_{\Lip} = \norm{\psi_{\rho,\rho'}}_{E^*} \leq \norm{\rho}_{L^2_x} \norm{\rho'}_{L^2_x}.
\end{equation*}
Hence, by \cref{lem:Pt_lip_Pt_quad}, we have for $\rho, \rho' \in {L^2_x}$, $\lambda \geq 0$ and $n \in E$
\begin{equation*}
	\abs{R_\lambda \psi_{\rho,\rho'}(n)} \leq C'_\lambda \norm{\rho}_{L^2_x} \norm{\rho'}_{L^2_x} (1 + \norm{n}_E).
\end{equation*}

Thus, for all $n \in E$, $(\rho,\rho') \mapsto R_\lambda \psi_{\rho,\rho'}(n)$ is a continuous bilinear form on $L^2_x$. By means of Riesz Representation Theorem, there exists a continuous linear map $\tilde R_\lambda(n) : {L^2_x} \to {L^2_x}$ such that
\begin{equation*}
	\forall \rho,\rho' \in {L^2_x}, \forall n \in E, R_\lambda \psi_{\rho,\rho'}(n) = \scalrho{\tilde R_\lambda(n)\rho}{\rho'}.
\end{equation*}
By a slight abuse of notation, denote $R_\lambda(n) = \tilde R_\lambda(n)$. For $\phi \in C^1(L^2_x)$, the linear mapping $D\phi(\rho)$ can also be identified as an element of $L^2_x$:
\begin{equation*}
	\forall \rho, h \in L^2_x, D\phi(\rho)(h) = \scalrho{h}{D\phi(\rho)},
\end{equation*}
so that we can define $D\phi(\rho)(R_\lambda(n)h)$ for $\rho,h \in L^2_x$.

Now consider $\rho$ and $\rho'$ in dual Sobolev spaces $H^k_x$ and $H^{-k}_x$ (for $k \in \mathbb N$ such that $E \subset C^{k}_x$, namely $k \leq 2 \floor{d/2} +2$). We also may define $R_\lambda(n) : H^k_x \to H^k_x$ in a compatible way.


\subsection{Properties of the covariance operator} \label{subsec:covariance_operator}

Recall that $k$, $F$ and $Q$ are defined by equations \cref{eq:def_k,eq:def_F,eq:def_Q}.

\begin{lemma} \label{lem:kFQ}
	The kernel $k$ is symmetric and in $L^\infty(\xSet \times \xSet)$. Moreover, $Q$ is a bounded, self-adjoint, compact and non-negative operator on $L^2_x$.
\end{lemma}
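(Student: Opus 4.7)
The plan is to handle the claims about $k$ first and then deduce the properties of $Q$.

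Symmetry of $k$ follows from stationarity of $\bar m$: one has $\esp{\bar m(0)(y)\bar m(t)(x)} = \esp{\bar m(-t)(y)\bar m(0)(x)}$, and the change of variables $t \mapsto -t$ in the integral defining $k(y,x)$ yields $k(y,x) = k(x,y)$. For the $L^\infty$ bound, the key observation is that for each $y \in \xSet$ the evaluation functional $\mathrm{ev}_y \colon n \mapsto n(y)$ belongs to $E^*$ with $\|\mathrm{ev}_y\|_{E^*} \leq 1$ (via the embedding $E \hookrightarrow C^0(\xSet)$), and $\langle \mathrm{ev}_y \rangle = 0$ by \cref{hyp:m_centered}, so $\mathrm{ev}_y$ is pseudo-linear uniformly in $y$. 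For $t \geq 0$, the Markov property gives
\[
\esp{\bar m(0)(x)\bar m(t)(y)} = \esp{\bar m(0)(x)\,P_t\mathrm{ev}_y(\bar m(0))},
\]
and the intermediate bound established in the proof of \cref{lem:Pt_lip_Pt_quad} yields $|P_t\mathrm{ev}_y(n)| \leq C\gamma_{mix}(t)(1+\|n\|_E)$ with $C$ independent of $y$. Combined with $|\bar m(0)(x)| \leq \|\bar m(0)\|_E$ and the second moment bound on $\nu$ coming from \cref{hyp:m_L2_all_starting_point}, this produces $|\esp{\bar m(0)(x)\bar m(t)(y)}| \leq C' \gamma_{mix}(t)$. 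Stationarity reduces $t<0$ to the same bound, and $\gamma_{mix} \in L^1$ gives $\|k\|_{L^\infty(\xSet\times\xSet)} < \infty$.

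Since $\xSet$ is compact, $k \in L^\infty \subset L^2(\xSet \times \xSet)$, so $Q$ is a Hilbert–Schmidt integral operator on $L^2_x$, hence bounded and compact; the symmetry of $k$ gives $Q^* = Q$. For non-negativity, I would fix $f \in L^2_x$ and set $X_t = \scalrho{\bar m(t)}{f}$, $c(t) = \esp{X_0 X_t}$. Fubini (justified by $k \in L^\infty$ and compactness of $\xSet$) gives $\scalrho{Qf}{f} = \int_{\mathbb R} c(t)\,dt$. Repeating the previous argument with the pseudo-linear form $\psi_f \colon n \mapsto \scalrho{n}{f}$ in place of $\mathrm{ev}_y$ yields $|c(t)| \leq C \|f\|_{L^2}^2 \gamma_{mix}(|t|)$, so $c \in L^1(\mathbb R)$. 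Using stationarity,
\[
\esp{\Bigl|\int_0^T X_s\,ds\Bigr|^2} = \int_{-T}^{T} (T-|u|)\,c(u)\,du,
\]
and dividing by $T$ and passing to the limit via dominated convergence delivers
\[
\scalrho{Qf}{f} = \int_{\mathbb R} c(u)\,du = \lim_{T\to\infty} \tfrac{1}{T}\esp{\Bigl|\int_0^T X_s\,ds\Bigr|^2} \geq 0.
\]

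The main obstacle is the non-negativity step: one must express $\scalrho{Qf}{f}$ as the limit of manifestly non-negative variances, and to do so one first needs to prove $c \in L^1(\mathbb R)$ from the mixing assumption before dominated convergence can be applied. The rest (symmetry, $L^\infty$ bound, Hilbert–Schmidt property) is a direct application of \cref{lem:Pt_lip_Pt_quad} to the appropriate pseudo-linear functionals.
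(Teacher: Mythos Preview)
Your proof is correct and follows essentially the same route as the paper: the paper packages the $t\geq 0$ and $t<0$ halves of the integral into the resolvent identity $k(x,y)=\int_E \psi_x R_0\psi_y\,d\nu+\int_E R_0\psi_x\,\psi_y\,d\nu$ and then invokes \cref{lem:Pt_lip_Pt_quad}, which is exactly your $P_t$-bound written in integrated form. The one genuine addition on your side is that you supply the standard variance-limit argument for non-negativity, whereas the paper simply cites \cite{debussche2011diffusion} for that step.
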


\begin{proof}
	Since $\bar m$ is stationary, we have the identity
	\begin{equation} \label{eq:expr_k}
		k(x,y) = \int_E \psi_x(n) R_0 \psi_y(n) d\nu(n) + \int_E R_0 \psi_x(n) \psi_y(n) d\nu(n),
	\end{equation}
	with $\psi_x(n) = n(x)$ for all $n \in E$ and $x \in \xSet$. The functions $\psi_x$ and $\psi_y$ are continuous linear forms, thus we have
	\begin{equation*}
		\sup_{x,y \in \xSet, n \in E} \frac{\abs{\psi_x R_0 \psi_y(n)}}{1 + \norm{n}_E^2} < \infty.
	\end{equation*}
	Owing to \cref{hyp:m_moments}, $\int_E \norm{n}_E^2 d\nu(n) < \infty$, thus $k$ is well-defined, bounded and symmetric. It implies that $Q$ is a bounded operator on $L^2_x$ and is self-adjoint and compact (see for instance \cite{dunford1964linear} XI.6).

	The proof of non-negativity of $Q$ is given in \cite{debussche2011diffusion}.
\end{proof}

By means of \cref{lem:kFQ}, the operator $Q^{1/2}$ can be defined ($L^2_x \to L^2_x$). Note that $Q$ is trace-class, that $Q^{1/2}$ is Hilbert-Schmidt on $L^2_x$ and that
\begin{equation*}
	\norm{Q^{1/2}}_{\mathcal L_2}^2 = \Tr Q = \int_{\mathbb T^d} F(x) dx.
\end{equation*}

Let $\seq{F_i}{i}$ be a orthonormal and complete system of eigenvectors of $Q^{1/2}$, and $\seq{\sqrt{q_i}}{i}$ their associated eigenvalues.
\begin{lemma} \label{lem:qiFi_summable}
	For all $i$, $F_i \in C^1_x$ and
	\begin{equation*}
		\sum_i q_i \norm{F_i}_{C^1_x}^2 < \infty.
	\end{equation*}
\end{lemma}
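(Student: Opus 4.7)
The plan is to exploit the smoothness of the kernel $k$, inherited from $\bar m(t) \in E = C^{2\floor{d/2}+4}(\xSet)$, by combining a Sobolev embedding with a trace identity for $Q$ sandwiched between derivative operators. Set $s := \floor{d/2}+2$, so that $2s \leq 2\floor{d/2}+4$ and the Sobolev embedding $H^s(\xSet) \hookrightarrow C^1(\xSet)$ holds (since $s > d/2 + 1$).

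I would first verify that the kernel $k(x,y)$ defined in \cref{eq:def_k} is of class $C^{2s}$ in each of its two variables. This follows from $\bar m(t) \in C^{2s}(\xSet)$ almost surely together with the moment estimate of \cref{hyp:m_moments} (which justifies differentiating under the expectation and the time integral). Consequently, $Q$ maps $L^2_x$ boundedly into $H^{2s}_x$: differentiating $QF(x) = \int_\xSet k(x,y) F(y)\,dy$ under the integral in $x$ and applying Cauchy--Schwarz in $y$ gives $\norm{\partial^\beta QF}_{L^2_x} \leq C \norm{F}_{L^2_x}$ for $|\beta| \leq 2s$. Hence for every $i$ with $q_i > 0$, the identity $F_i = q_i^{-1} QF_i$ forces $F_i \in H^{2s}_x \subset C^1_x$, giving the first claim (any eigenvectors with $q_i = 0$ can be chosen within $C^1_x$ since they do not contribute to the sum).

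For the summability, by Sobolev and the equivalence $\norm{f}_{H^s_x}^2 \asymp \sum_{|\beta| \leq s} \norm{\partial^\beta f}_{L^2_x}^2$, it suffices to show that for each $|\beta| \leq s$,
\begin{equation*}
\sum_i q_i \norm{\partial^\beta F_i}_{L^2_x}^2 < \infty.
\end{equation*}
To this end, introduce the integral operator $A_\beta$ on $L^2_x$ with kernel $\partial_x^\beta \partial_y^\beta k(x,y)$. Using periodicity and $(\partial^\beta)^* = (-1)^{|\beta|} \partial^\beta$, an integration by parts on smooth functions identifies $A_\beta$ with $(\partial^\beta)^* Q \partial^\beta$, so $A_\beta$ is symmetric and non-negative. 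Its kernel is continuous and bounded (since $2|\beta| \leq 2s$), so by Mercer's theorem on the compact torus,
\begin{equation*}
\Tr A_\beta = \int_{\xSet} \partial_x^\beta \partial_y^\beta k(x,y)\big|_{y=x}\,dx < \infty.
\end{equation*}
On the other hand, computing the trace in the eigenbasis $\{F_i\}$ of $Q$, expanding $\partial^\beta F_i$ via Parseval in the same basis, and interchanging the two sums yields
\begin{equation*}
\Tr A_\beta = \sum_i \scalrho{Q \partial^\beta F_i}{\partial^\beta F_i} = \sum_l q_l \sum_i \abs{\scalrho{F_l}{\partial^\beta F_i}}^2 = \sum_l q_l \norm{\partial^\beta F_l}_{L^2_x}^2,
\end{equation*}
where the last equality uses that $F_l \in H^{|\beta|}_x$ whenever $q_l > 0$. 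Summing over $|\beta| \leq s$ and applying Sobolev finishes the proof.

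The main technical hurdle is the double computation of $\Tr A_\beta$: one must justify Mercer's diagonal formula (standard for continuous non-negative kernels on a compact space) and the Parseval manipulation above. Both are cleanest to carry out by restricting attention to the indices with $q_i > 0$, since terms in $\Ker Q$ contribute zero on both sides and therefore do not require any additional regularity.
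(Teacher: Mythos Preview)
Your approach is correct in spirit and genuinely different from the paper's. The paper differentiates $k$ only in the $x$ variable (working with $\partial_x^{2\beta}k$), then plugs in the explicit representation \cref{eq:expr_k} of $k$ in terms of the resolvent $R_0$ and the stationary measure $\nu$, sums the resulting products $(\,\cdot\,,F_i)(\,\cdot\,,F_i)$ via Parseval to collapse to an $L^2_x$ inner product, and bounds this by Cauchy--Schwarz and \cref{lem:Pt_lip_Pt_quad}. You instead differentiate symmetrically in both variables, recognise $A_\beta$ as a non-negative operator with continuous kernel, and read off $\Tr A_\beta<\infty$ from Mercer's diagonal formula. Your route is cleaner and uses only the smoothness of $k$, not its specific probabilistic form; the paper's route yields an explicit bound in terms of $\int_E(1+\norm{n}_E^2)\,d\nu(n)$.

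There is, however, a small gap in your trace computation. In the chain
\[
\Tr A_\beta = \sum_i \scalrho{Q\,\partial^\beta F_i}{\partial^\beta F_i} = \sum_l q_l \sum_i \abs{\scalrho{F_l}{\partial^\beta F_i}}^2 = \sum_l q_l \norm{\partial^\beta F_l}_{L^2_x}^2,
\]
the first equality requires $(A_\beta F_i,F_i)=(Q\partial^\beta F_i,\partial^\beta F_i)$, i.e.\ an integration by parts that needs $F_i\in H^{|\beta|}_x$; this is established only for $q_i>0$. Your remark that ``terms in $\Ker Q$ contribute zero on both sides'' is not correct at this intermediate stage: for $F_i\in\Ker Q$ one has $(A_\beta F_i,F_i)=(Q\partial^\beta F_i,\partial^\beta F_i)\ge 0$, and there is no reason for $\partial^\beta F_i$ to lie in $\Ker Q$. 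The fix is immediate: compute the trace in the Fourier basis $\{e_j\}$ instead of $\{F_i\}$. Each $e_j$ is smooth, so $(A_\beta e_j,e_j)=(Q\partial^\beta e_j,\partial^\beta e_j)$ holds by honest integration by parts, and the remaining two equalities go through unchanged (the last one only needs $F_l\in H^{|\beta|}_x$ for $q_l>0$, which you have).
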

\begin{proof}
	Let $s = \floor{\frac d 2} + 2$, so that we have the continuous embeddings $H^s_x \subset C^1_x$. It is thus sufficient to prove that $\sum_i q_i \norm{F_i}_{H^s_x}^2 < \infty$.

	Since $\bar m(t) \in E = C^{2s}_x$ and is mixing, is it straightforward to prove that $k \in H^{2s}_x$ using a differentiation under the integral sign in \cref{eq:def_k}.

	For $\abs \beta \leq s$, we multiply the identity $q_i F_i = Q F_i$ by $\frac{\partial^{2\abs{\beta}} F_i}{\partial x^{2\beta}}$ and integrate by parts both sides of the equality to get
	\begin{equation*}
		(-1)^{\abs \beta} q_i \norm{\frac{\partial^{\abs{\beta}} F_i}{\partial x^\beta}}_{L^2_x}^2 = \int\int \frac{\partial^{2\abs{\beta}} k(x,y)}{\partial x^{2\beta}} F_i(x) F_i(y) dx dy.
	\end{equation*}
	Using \cref{eq:expr_k}, we have
	\begin{multline*}
		\int_\xSet \int_\xSet \frac{\partial^{2\abs{\beta}} k(x,y)}{\partial x^{2\beta}} F_i(x) F_i(y) dx dy =\\
		\begin{split}
			&\int_E \int_\xSet \frac{\partial^{2\abs{\beta}} n}{\partial x^{2\beta}}(x) F_i(x) dx \int_\xSet R_0 \psi_y(n) F_i(y) dy d\nu(n)\\
			&+ \int_E \int_\xSet \frac{\partial^{2\abs{\beta}} (R_0 \psi_x(n))}{\partial x^{2\beta}} F_i(x) dx \int_\xSet n(y) F_i(y) dy d\nu(n).
		\end{split}
	\end{multline*}
	We sum with respect to $i$, use the Parseval identity and the Cauchy-Schwarz inequality to get
	\begin{align*}
		\sum_i q_i \norm{\frac{\partial^{\abs{\beta}} F_i}{\partial x^\beta}}_{L^2_x}^2
			&\leq 2 \int_E \norm{R_0 \psi_x(n)}_{H^{2 \abs \beta}_x} \norm{n}_{H^{2 \abs \beta}_x} d\nu(n)\\
			&\leq C \int_E (1+\norm{n}_E^2) d\nu(n),
	\end{align*}
	for some constant $C$, using \cref{lem:Pt_lip_Pt_quad}. This upper bound is finite by \cref{hyp:m_L2_all_starting_point}. Summing with respect to $\beta$ concludes the proof.
\end{proof}

\subsection{Behavior of the stopping time for the driving process}

Recall that $\taue$ is defined by \cref{eq:def_taue}: $\taue \doteq \inf \set{t \in \tSet \mid \norm{\bar m^\epsilon(t)}_E > \epsilon^{-\alpha}}$.

In this section, we establish \cref{lem:taue_to_infty} below. Its objectives are twofold. On the one hand, it shows that $\bar m^\epsilon$ is almost surely bounded on any interval $[0,T]$, which is useful to justify the well-posedness of \cref{eq:SPDE}. On the other hand, it gives us an estimate for $\epsilon^\alpha \norm{\bar m\stopped(t)}_E$ uniform in $t$ and $\epsilon$ for some small $\alpha$. This estimate will prove useful for \cref{subsec:generator_martingale,sec:tightness,sec:convergence}. Therefore, it is a key result of this paper.

\begin{lemma} \label{lem:taue_to_infty}
	Let \cref{hyp:m_moments,hyp:stationarity} be satisfied and let $T>0$. Then almost surely
	\begin{equation*}
		\sup_{t \in [0,T]} \norm{\bar m^\epsilon(t)}_E < \infty.
	\end{equation*}
	Moreover, let $\alpha > \frac 2 \gamma$ and define the $\seq{\mathcal F^\epsilon_t}t$-stopping time $\taue$ by \cref{eq:def_taue}.	Then, we have
	\begin{equation*}
		\forall T>0, \proba{\taue < T} \xrightarrow[\epsilon \to 0]{} 0.
	\end{equation*}
\end{lemma}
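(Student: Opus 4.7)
}

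The key observation is that $\bar m^\epsilon(t) = \bar m(t/\epsilon^2)$, so the supremum over $t \in [0,T]$ of $\norm{\bar m^\epsilon(t)}_E$ equals the supremum over $s \in [0, T/\epsilon^2]$ of $\norm{\bar m(s)}_E$. Since $\bar m$ is the stationary Markov process (started from $\nu$), the process itself is stationary, so $\sup_{s \in [k,k+1]} \norm{\bar m(s)}_E$ has the same law as $\sup_{s \in [0,1]} \norm{\bar m(s)}_E$ for every integer $k \geq 0$.

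For the first assertion, I would write, for fixed $T > 0$ and small $\epsilon$, $N_\epsilon \doteq \lceil T/\epsilon^2 \rceil$ and bound
\begin{equation*}
	\sup_{t \in [0,T]} \norm{\bar m^\epsilon(t)}_E \leq \max_{0 \leq k \leq N_\epsilon - 1} \sup_{s \in [k,k+1]} \norm{\bar m(s)}_E.
\end{equation*}
By \cref{hyp:m_moments} and stationarity, each of the finitely many random variables $\sup_{s \in [k,k+1]} \norm{\bar m(s)}_E$ has finite $\gamma$-th moment, hence is almost surely finite; their maximum is therefore almost surely finite.

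For the second assertion, a union bound combined with stationarity gives
\begin{equation*}
	\proba{\taue < T} = \proba{\sup_{s \in [0,T/\epsilon^2]} \norm{\bar m(s)}_E > \epsilon^{-\alpha}} \leq N_\epsilon \, \proba{\sup_{s \in [0,1]} \norm{\bar m(s)}_E > \epsilon^{-\alpha}},
\end{equation*}
and Markov's inequality at order $\gamma$ yields
\begin{equation*}
	\proba{\sup_{s \in [0,1]} \norm{\bar m(s)}_E > \epsilon^{-\alpha}} \leq \epsilon^{\alpha \gamma} \, \esp{\sup_{s \in [0,1]} \norm{\bar m(s)}_E^\gamma}.
\end{equation*}
Using $N_\epsilon \leq T/\epsilon^2 + 1$ and the finite moment from \cref{hyp:m_moments}, one gets $\proba{\taue < T} \leq C \epsilon^{\alpha \gamma - 2}$, which tends to $0$ since $\alpha \gamma > 2$ by the choice $\alpha > 2/\gamma$.

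There is no real obstacle here; the only subtle point is to make sure one actually has stationarity of the process (rather than stationarity of the one-dimensional marginals alone) to replace the supremum on $[k, k+1]$ by a supremum on $[0,1]$ in law, which is exactly what \cref{hyp:stationarity} provides since $\bar m$ is the Markov process started from its invariant distribution $\nu$.
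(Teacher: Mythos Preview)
Your proof is correct. The first assertion is handled exactly as in the paper. For the second assertion, however, you take a genuinely different and more direct route: a union bound over the $N_\epsilon \sim T/\epsilon^2$ unit intervals, stationarity to reduce to a single interval, and then Markov's inequality at order $\gamma$, yielding the explicit rate $\proba{\taue < T} \lesssim \epsilon^{\alpha\gamma - 2}$. The paper instead goes through a Borel--Cantelli argument: it picks $\delta \in (1/\gamma, \alpha/2)$, shows that $\sum_k \proba{S_k \geq k^\delta} < \infty$ where $S_k = \sup_{[k,k+1]} \norm{\bar m}_E$, and deduces an almost sure pathwise growth bound $\norm{\bar m(t)}_E \leq Z + t^\delta$ for some a.s.\ finite random variable $Z$; the conclusion then follows from $\proba{Z + (T\epsilon^{-2})^\delta > \epsilon^{-\alpha}} \to 0$. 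Your argument is shorter and gives a quantitative rate; the paper's argument produces the stronger intermediate statement of an almost sure polynomial envelope for $\bar m$, which is not used elsewhere in the paper but could be of independent interest.
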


\begin{remark}
	Equation \cref{eq:def_taue} implies that for $t \in \tSet$,
	\begin{equation} \label{eq:taue_bounds_m}
		\norm{\bar m\stopped(t)}_E = \norm{\bar m^\epsilon(t \wedge \taue)}_E \leq \epsilon^{-\alpha} \vee \norm{\bar m(0)}_E.
	\end{equation}
	In particular, on the event $\set{\norm{\bar m(0)}_E > \epsilon^{-\alpha}}$, we have $\taue = 0$ and $\bar m^{\epsilon,\taue}(t) = \bar m(0)$. Thus, one does not necessarily have the estimate $\norm{\bar m^{\epsilon,\taue}(t)}_E \leq \epsilon^{-\alpha}$.
\end{remark}

\begin{proof}
	Let $\seq{S_k}{k}$ be the identically distributed random variables defined by
	\begin{equation*}
		\forall k \in \mathbb N_0, S_k \doteq \sup_{t \in [k,k+1]} \norm{\bar m(t)}_E.
	\end{equation*}
	By means of \cref{hyp:stationarity,hyp:m_centered,hyp:m_L2_all_starting_point,hyp:m_moments}, for all $k \in \mathbb N$, $\esp{S_k^\gamma} = \esp{S_0^\gamma} < \infty$. Thus, almost surely, $S_k < \infty$ for all $k \in \mathbb N_0$. This yields the first result:
	\begin{equation*}
		\mathbb P\mbox{-a.s.}, \forall T > 0, \sup_{t \in [0,T]} \norm{\bar m^\epsilon(t)}_E \leq \sup_{k \leq T\epsilon^{-2}+1} S_k < \infty.
	\end{equation*}

	Since $\alpha > \frac 2 \gamma$, there exists $\delta$ such that $ \frac \alpha 2 > \delta > \frac 1 \gamma$. Then, the Markov inequality yields
	\begin{equation*}
		\sum_{k \in \mathbb N} \proba{S_k \geq k^\delta} \leq \sum_{k \in \mathbb N} \frac{\esp{S_k^\gamma}}{k^{\delta \gamma}} = \esp{S_0^\gamma} \sum_{k \in \mathbb N} \frac{1}{k^{\delta \gamma}} < \infty.
	\end{equation*}
	By means of the Borel-Cantelli Lemma, almost surely, there exists a random integer $k_0 \in \mathbb N$ such that
	\begin{equation*}
		\mathbb P\mbox{-a.s.}, \forall k > k_0, S_k < k^\delta.
	\end{equation*}
	Define the random variable $Z \doteq \sup_{k \leq k_0} S_k$. Then $Z$ is almost surely finite and
	\begin{equation*}
		\mathbb P \mbox{-a.s.}, \forall t \in \tSet, \norm{\bar m(t)}_E \leq S_{\floor t} \leq Z + \floor{t}^\delta \leq Z + t^\delta.
	\end{equation*}
	Finally, for $T > 0$, using that $\alpha > 2 \delta$, we get
	\begin{equation*}
		\proba{\taue < T} = \proba{\sup_{t \in [0,T]} \norm{\bar m^\epsilon(t)}_E > \epsilon^{-\alpha}} \leq \proba{Z + (T \epsilon^{-2})^\delta > \epsilon^{-\alpha}} \xrightarrow[\epsilon \to 0]{} 0.
	\end{equation*}
\end{proof}

In the sequel, $\alpha$ will be required to satisfy the constraint
\begin{equation} \label{eq:hyp_alpha}
	\alpha < \frac{1}{b+2}.
\end{equation}
Combined with the condition $\alpha > \frac 2 \gamma$ appearing in \cref{lem:taue_to_infty}, this motivates the condition $\gamma > 2(b+2)$ in \cref{hyp:m_L2_all_starting_point}.

\subsection{Pathwise solutions}

By means of \cref{lem:taue_to_infty}, we are in position to prove the existence and uniqueness of pathwise solutions of \cref{eq:SPDE,eq:initial_condition} (namely solutions when $\omega$ is fixed).

\begin{proposition} \label{prop:pathwise_solution}
	Let \cref{hyp:m_moments,hyp:stationarity} be satisfied. Let $T > 0$ and $\epsilon > 0$. Then for any $f^\epsilon_0 \in \fSet$, there exists, almost surely, a unique solution $f^\epsilon$ of \cref{eq:SPDE} in $C([0,T];\fSet)$, in the sense that
	\begin{equation*}
		\mathbb P\mbox{-a.s.}, \forall t \in [0,T], f^\epsilon(t) = e^{- \frac t \epsilon A} f^\epsilon_0 + \int_0^t e^{- \frac{t-s}{\epsilon} A} \parths{\frac{1}{\epsilon^2} Lf^\epsilon(s) + \frac 1 \epsilon f^\epsilon(s) \bar m^\epsilon(s)}ds
	\end{equation*}
	where $A$ is the operator defined by
	\begin{gather*}
		D(A) \doteq \set{f \in \fSet \mid (x,v) \mapsto \scald{a(v)}{\nabla_x f(x,v)} \in \fSet}\\
		Af(x,v) \doteq \scald{a(v)}{\nabla_x f(x,v)}.
	\end{gather*}
\end{proposition}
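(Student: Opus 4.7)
The plan is to treat the problem pathwise as a semilinear evolution equation in $\fSet$ and apply a Banach fixed-point argument on short time intervals.

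First, I would identify the semigroup $e^{-tA/\epsilon}$. The operator $A$ is skew-symmetric on $\fSet$: for $f,g \in D(A)$, integration by parts in $x$ on $\xSet = \mathbb T^d$ gives $\scalf{Af}{g} = -\scalf{f}{Ag}$, because the weight $\mathcal M^{-1}(v)$ depends only on $v$. By Stone's theorem, $-A$ generates a $C_0$-group of isometries $(e^{-tA})_{t \in \mathbb R}$ on $\fSet$; explicitly $(e^{-tA}f)(x,v) = f(x - ta(v),v)$. In particular, $\norm{e^{-tA/\epsilon}}_{\mathcal L(\fSet)} = 1$ for all $t$.

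Second, fix $\omega$ in the almost sure event provided by \cref{lem:taue_to_infty}, so that $M_T \doteq \sup_{t \in [0,T]} \norm{\bar m^\epsilon(t)}_E < \infty$. Set $G(s,f) \doteq \frac{1}{\epsilon^2}Lf + \frac 1 \epsilon f \bar m^\epsilon(s)$. The operator $L$ satisfies $\norm{Lf}_\fSet \leq \norm{f}_\fSet$, recalled right after \cref{hyp:L_BGK}; and multiplication by $\bar m^\epsilon(s)$ is bounded on $\fSet$ with norm dominated by $\norm{\bar m^\epsilon(s)}_{L^\infty(\xSet)} \leq C \norm{\bar m^\epsilon(s)}_E$ (continuous embedding $E \hookrightarrow L^\infty$). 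Hence, uniformly on $[0,T]$,
\begin{equation*}
    \norm{G(s,f_1) - G(s,f_2)}_\fSet \leq \kappa \, \norm{f_1-f_2}_\fSet, \qquad \kappa \doteq \tfrac{1}{\epsilon^2} + \tfrac{C M_T}{\epsilon}.
\end{equation*}

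Third, I would run a standard Picard/Banach argument. On the Banach space $\mathcal X_{T_0} \doteq C([0,T_0];\fSet)$, define
\begin{equation*}
    \Phi(f)(t) = e^{-tA/\epsilon} f^\epsilon_0 + \int_0^t e^{-(t-s)A/\epsilon} G(s,f(s))\,ds.
\end{equation*}
Using the isometry of the group and the Lipschitz bound for $G$,
\begin{equation*}
    \sup_{t \in [0,T_0]} \norm{\Phi(f_1)(t) - \Phi(f_2)(t)}_\fSet \leq \kappa T_0 \sup_{t \in [0,T_0]} \norm{f_1(t)-f_2(t)}_\fSet.
\end{equation*}
Choosing $T_0 = 1/(2\kappa)$ makes $\Phi$ a strict contraction, yielding a unique fixed point on $[0,T_0]$. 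Because $\kappa$ is independent of the initial data, one iterates the construction on $[T_0,2T_0],[2T_0,3T_0],\ldots$ with the previous endpoint as initial data, and concatenates after finitely many steps to cover $[0,T]$. Uniqueness on the full interval follows from the local uniqueness (or from a direct Gronwall argument on the difference of two mild solutions).

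Honestly there is no serious obstacle: the argument is the textbook mild-solution theory for semilinear evolution equations with a bounded nonlinearity. The only point requiring care is the pathwise nature of the bound: the Lipschitz constant $\kappa$ depends on $\omega$ through $M_T(\omega)$, but \cref{lem:taue_to_infty} guarantees $M_T(\omega) < \infty$ almost surely, which is exactly what is needed to run the contraction in a deterministic way for each such $\omega$. Measurability of $\omega \mapsto f^\epsilon$ is inherited from the Picard iterates, each of which is clearly adapted since $\bar m^\epsilon$ is.
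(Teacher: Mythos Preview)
Your proposal is correct and follows essentially the same approach as the paper: fix $\omega$, use \cref{lem:taue_to_infty} to get a pathwise $L^\infty$ bound on $\bar m^\epsilon$, exploit that $e^{-tA/\epsilon}$ is an isometry on $\fSet$, and run a Banach fixed-point argument on the mild formulation. The only cosmetic difference is that the paper obtains the contraction in one shot on $[0,T]$ by using the exponentially weighted norm $\norm{f}_r = \sup_{t\in[0,T]} e^{-rt}\norm{f(t)}_\fSet$ with $r$ large, whereas you use the equivalent short-time contraction plus iteration; both are standard variants of the same argument.
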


Note that here $\epsilon$ is fixed. Thus, the proof is standard, based on a fixed-point theorem.

\begin{proof}
	Let $\omega \in \Omega$ and $\epsilon > 0$. For $f \in C([0,T],\fSet)$, let
	\begin{equation*}
		\Phi(f) = e^{- \frac t \epsilon A} f^\epsilon_0 + \int_0^t e^{- \frac{t-s}{\epsilon} A} \parths{\frac{1}{\epsilon^2} Lf(s) + \frac 1 \epsilon f(s) \bar m^\epsilon(s)}ds.
	\end{equation*}
	Owing to the Banach fixed-point theorem, it is sufficient to prove that $\Phi$ is a contraction for some Banach norm on $C([0,T],\fSet)$. For $r \in [0,\infty)$, we consider the following Banach norm
	\begin{equation*}
		\forall f \in C([0,T],\fSet), \norm{f}_r = \sup_{t \in [0,T]} e^{-r t} \norm{f(t)}_\fSet.
	\end{equation*}

	Since the semi-group associated to $A$ is given by
	\begin{equation*}
		\forall f \in \fSet, \forall x \in \xSet, \forall v \in \vSet, e^{tA}f(x,v) = f(x + t a(v),v),
	\end{equation*}
	for $f \in \fSet$, we have for all $t \in \mathbb R$ and $f \in \fSet$,
	\begin{equation*}
		\norm{e^{tA}f}_\fSet = \norm{f}_\fSet.
	\end{equation*}
	Thus, for $t \in [0,T]$, and $f, g \in C([0,T],\fSet)$, we get
	\begin{equation*}
		\begin{split}
		\norm{\Phi(f)(t) - \Phi(g)(t)}_\fSet
			&\leq \frac{1}{\epsilon^2} \int_0^t \norm{L(f-g)(s)}_\fSet ds\\
			&\quad + \frac 1 \epsilon \int_0^t \norm{(f-g)(s)\bar m^\epsilon(s)}_\fSet ds.
		\end{split}
	\end{equation*}

	By \cref{lem:taue_to_infty}, since $\omega$ is fixed, $\bar m^\epsilon$ is bounded in E on $[0,T]$. Since $\norm{Lh}_\fSet \leq \norm{h}_\fSet$ for $h \in \fSet$, we get
	\begin{equation*}
		\norm{\Phi(f)(t) - \Phi(g)(t)}_\fSet \leq \parths{\frac{1}{\epsilon^2} + \frac{1}{\epsilon} \sup_{t \in [0,T]} \norm{\bar m^\epsilon(t)}_E} \int_0^t e^{r s} \norm{f-g}_r ds.
	\end{equation*}
	Hence, we have
	\begin{equation*}
		e^{-r t} \norm{\Phi(f)(t) - \Phi(g)(t)}_\fSet \leq \parths{\frac{1}{\epsilon^2} + \frac{1}{\epsilon} \sup_{t \in [0,T]} \norm{\bar m^\epsilon(t)}_E} \frac{1 - e^{-r t}}{r} \norm{f-g}_r,
	\end{equation*}
	and
	\begin{equation*}
		\norm{\Phi(f)-\Phi(g)}_r \leq \frac{1}{r} \parths{\frac{1}{\epsilon^2} + \frac{1}{\epsilon} \sup_{t \in [0,T]} \norm{\bar m^\epsilon(t)}_E} \norm{f-g}_r.
	\end{equation*}

	By taking $r$ large enough, we get that $\Phi$ is contracting, which concludes the proof.
\end{proof}


\subsection{Estimate in $\fSet$} \label{subsec:bound_L2}

In this section, we obtain an upper bound on $\norm{f\stoppedL}_\fSet$. Note that in the case where the driving process $\bar m$ is bounded, \cite{debussche2011diffusion} establishes a similar upper bound without introducing a stopping time. Here, the unboundedness of the stopped process $\bar m\stopped$ requires more intricate arguments and an additional stopping time $\tauLze$. One of these arguments is the introduction of a weight $\mathcal M^\epsilon$ that depends on $\epsilon$.

\begin{proposition} \label{prop:L2_bound}
	Assume that $f^\epsilon_0 \in \fSet$. For $\Lambda > 0$ and $\epsilon > 0$, define $\zeta^\epsilon$ by \cref{eq:def_zeta_epsilon} and $\tauLze$ by \cref{eq:def_tauLze}.

	Then almost surely, for all $t \in [0,T]$ and $\epsilon \in (0,\parths{4 \norm{a}_{L^\infty} \Lambda}^{-1}]$,
	\begin{equation} \label{eq:L2_bound}
		\norm{f\stoppedLz(t)}_\fSet^2 + \frac{1}{\epsilon^2} \int_0^{t \wedge \tauLze} \norm{L f\stoppedLz(s)}_{\fSet}^2 ds \leq C_\Lambda(T) \norm{f^\epsilon_0}_\fSet^2,
	\end{equation}
	for some $C_\Lambda(T) > 0$ depending only on $\Lambda$, $\norm{a}_{L^\infty}$ and $T$.
\end{proposition}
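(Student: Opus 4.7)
The key observation is that the source term $\tfrac{1}{\epsilon} f^\epsilon \bar m^\epsilon$ is, up to $\epsilon^{-1}$, a time derivative of $\zeta^\epsilon$, so one can absorb it using an $\epsilon$-dependent weight. Concretely, I would work with $\mathcal M^\epsilon(t,x,v):=\mathcal M(v)\,e^{2\zeta^\epsilon(t,x)}$, or equivalently introduce $g^\epsilon:=e^{-\zeta^\epsilon}f^\epsilon$. Since $\zeta^\epsilon$ does not depend on $v$, multiplication by $e^{\pm \zeta^\epsilon}$ commutes with $L$: $L(g^\epsilon e^{\zeta^\epsilon})=e^{\zeta^\epsilon}L g^\epsilon$. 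Using $\partial_t \zeta^\epsilon=\bar m^\epsilon/\epsilon$, a direct computation then shows that $g^\epsilon$ solves the source-free equation
\begin{equation*}
\partial_t g^\epsilon+\frac{1}{\epsilon}a(v)\cdot\nabla_x g^\epsilon+\frac{1}{\epsilon}(a(v)\cdot\nabla_x\zeta^\epsilon)\,g^\epsilon=\frac{1}{\epsilon^2}L g^\epsilon.
\end{equation*}

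Step 2 is the $L^2(\mathcal M^{-1})$ energy estimate for $g^\epsilon$. The pure transport term vanishes after integration by parts on $\mathbb T^d$, and $L$ being an orthogonal projection yields $\tfrac{2}{\epsilon^2}\scalf{L g^\epsilon}{g^\epsilon}=-\tfrac{2}{\epsilon^2}\|L g^\epsilon\|_\fSet^2$. One is left with
\begin{equation*}
\frac{d}{dt}\|g^\epsilon\|_\fSet^2+\frac{2}{\epsilon^2}\|L g^\epsilon\|_\fSet^2=-\frac{2}{\epsilon}\int_\xSet\int_\vSet\frac{(g^\epsilon)^2\,a(v)\cdot\nabla_x\zeta^\epsilon}{\mathcal M(v)}\,d\mu(v)\,dx.
\end{equation*}
The crucial cancellation for handling the $1/\epsilon$ factor is the orthogonal decomposition $g^\epsilon=\tilde\rho^\epsilon\mathcal M-L g^\epsilon$ with $\tilde\rho^\epsilon=\int g^\epsilon d\mu$: expanding $(g^\epsilon)^2$ and integrating $v$-wise, the \emph{macroscopic} contribution $\int_\vSet (\tilde\rho^\epsilon\mathcal M)^2 a/\mathcal M\,d\mu=(\tilde\rho^\epsilon)^2\int_\vSet a\mathcal M\,d\mu$ vanishes by the centering assumption \cref{eq:aM_centered}, and the remaining cross and microscopic pieces are each controlled by $\|L g^\epsilon\|_\fSet$, using $\|\nabla_x\zeta^\epsilon\|_\infty\le\Lambda$ on the stopped trajectory.

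Step 3 is to absorb the surviving $1/\epsilon$ terms into the dissipation. Young's inequality handles the cross term $\tfrac{1}{\epsilon}\|\tilde\rho^\epsilon\|_{L^2_x}\|L g^\epsilon\|_\fSet$ by paying $\tfrac{1}{\epsilon^2}\|L g^\epsilon\|^2$ plus a term in $\|\tilde\rho^\epsilon\|_{L^2_x}^2\le\|g^\epsilon\|_\fSet^2$; the remaining microscopic piece is bounded by $\tfrac{2\|a\|_\infty\Lambda}{\epsilon}\|L g^\epsilon\|^2$, which is dominated by $\tfrac{1}{2\epsilon^2}\|L g^\epsilon\|^2$ precisely under the assumption $\epsilon\le(4\|a\|_\infty\Lambda)^{-1}$. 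This is the reason for the smallness condition in the statement. One obtains
\begin{equation*}
\frac{d}{dt}\|g^\epsilon\|_\fSet^2+\frac{1}{2\epsilon^2}\|L g^\epsilon\|_\fSet^2\le C(\|a\|_\infty,\Lambda)\,\|g^\epsilon\|_\fSet^2,
\end{equation*}
and Gronwall gives $\|g^\epsilon(t\wedge\tau_\Lambda(\zeta^\epsilon))\|_\fSet^2\lesssim e^{C(\|a\|_\infty,\Lambda)T}\|f_0^\epsilon\|_\fSet^2$ (using $\zeta^\epsilon(0)=0$), along with the integrated dissipation. Finally one translates back via $f^\epsilon=e^{\zeta^\epsilon}g^\epsilon$ and $Lf^\epsilon=e^{\zeta^\epsilon}Lg^\epsilon$: on $\{t\le\tau_\Lambda(\zeta^\epsilon)\}$ we have $|\zeta^\epsilon|\le\Lambda$, producing \cref{eq:L2_bound} with $C_\Lambda(T)$ of the form $\exp(2\Lambda+C(\|a\|_\infty,\Lambda)T)$.

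The main technical obstacle I anticipate is rigor: since $f^\epsilon$ is only a mild solution in $C([0,T];\fSet)$ (\cref{prop:pathwise_solution}), the pointwise time differentiation above is formal. The clean way is to regularize (smooth initial data, or mollify the generator), derive the identity for the regularized problem where everything is classical, and pass to the limit using the estimate itself as the uniform control. All other steps are structural and rely only on (i) $L$ being an orthogonal projection in $\fSet$, (ii) the centering $\int a\mathcal M d\mu=0$, and (iii) the $C^1_x$-control of $\zeta^\epsilon$ on the stopped interval.
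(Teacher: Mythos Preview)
Your proposal is correct and follows essentially the same route as the paper. Working with $g^\epsilon=e^{-\zeta^\epsilon}f^\epsilon$ in the unweighted norm is exactly the paper's computation of $\|f^\epsilon\|_{L^2((\mathcal M^\epsilon)^{-1})}$ with $\mathcal M^\epsilon=e^{2\zeta^\epsilon}\mathcal M$, and the three key ingredients---the centering $\int a\mathcal M\,d\mu=0$ killing the macroscopic piece, Young's inequality on the cross term, and the smallness condition $\epsilon\le(4\|a\|_{L^\infty}\Lambda)^{-1}$ absorbing the microscopic piece into the dissipation---appear identically in both arguments.
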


Note that $\tauLze > 0$ almost surely since $\zeta^\epsilon (0) = 0$.

\begin{remark}
	The condition $\epsilon \in (0,\parths{4 \norm{a}_{L^\infty} \Lambda}^{-1}]$ only reads: we fix $\Lambda$, then take $\epsilon$ small enough ($\epsilon \to 0$) depending on the fixed $\Lambda$. From now on, we always assume $\epsilon \leq \parths{4 \norm{a}_{L^\infty} \Lambda}^{-1} < 1$. In particular, we denote by $\sup_\epsilon$ the supremum with respect to $\epsilon \in (0,\parths{4 \norm{a}_{L^\infty} \Lambda}^{-1}] \subset (0,1)$.
\end{remark}

In most of the paper, we neglect the integral term of the left-hand side of \cref{eq:L2_bound} and we only use
\begin{equation*}
	\norm{f\stoppedLz(t)}_\fSet^2 \leq C_\Lambda(T) \norm{f^\epsilon_0}_\fSet^2.
\end{equation*}
Equation \cref{eq:L2_bound} will prove useful in \cref{sec:strong_tightness}.

Let us introduce some notation. For any variable $u$, $x \lesssim_u y$ means that there exists $C$ such that $x \leq C y$ where $C$ depends only on $u$, $a$, $\mathcal M$, $B$, $\nu$, $\gamma$, $\alpha$, $\gamma_{mix}$, $b$ and $\law{f^\epsilon_0}$ the distribution of $f^\epsilon_0$. With this notation, \cref{eq:L2_bound} yields
\begin{equation*}
	\norm{f\stoppedLz(t)}_\fSet^2 \lesssim_\Lambda \norm{f^\epsilon_0}_\fSet^2.
\end{equation*}
and
\begin{equation*}
	\frac{1}{\epsilon^2} \int_0^{t \wedge \tauLze} \norm{L f\stoppedLz(s)}_{\fSet}^2 ds \lesssim_\Lambda \norm{f^\epsilon_0}_\fSet^2.
\end{equation*}

\begin{proof}
	Define the time-dependent weight
	\begin{equation*}
		\mathcal M^\epsilon(t,x,v) = e^{2 \zeta^\epsilon(t,x)} \mathcal M(v),
	\end{equation*}
	and the associate weighted norm
	\begin{equation*}
		\norm{f}_{L^2(\mathcal M^\epsilon(t)^{-1})} \doteq \parths{\int\int \frac{\abs{f^\epsilon(x,v)}^2}{\mathcal M^\epsilon(t,x,v)} dx d\mu(v)}^{\frac 1 2}.
	\end{equation*}
	We have, for $t \in \tSet$,
	\begin{align*}
		\begin{split}
		\frac 1 2 \partial_t \norm{f^\epsilon(t)}_{L^2(\mathcal M^\epsilon(t)^{-1})}^2
			&= \int\int \parths{\frac{f^\epsilon(t,x,v)}{\mathcal M^\epsilon(t,x,v)} \parths{- \frac{1}{\epsilon} Af^\epsilon+ \frac{1}{\epsilon^2} L f^\epsilon + \frac {1} \epsilon f^\epsilon \bar m^\epsilon}(t,x,v) \right.\\
			&\quad \left. - \frac{\abs{f^\epsilon(t,x,v)}^2}{2\abs{\mathcal M^\epsilon(t,x,v)}^2} \partial_t \mathcal M^\epsilon(t,x,v)} dx d\mu(v)
		\end{split}\\
			&= \mathcal A_\epsilon + \mathcal B_\epsilon
	\end{align*}
	with
	\begin{gather*}
		\mathcal A_\epsilon = \frac{1}{\epsilon^2} \int\int \frac{f^\epsilon(t,x,v)}{\mathcal M^\epsilon(t,x,v)} \parths{Lf^\epsilon + \epsilon f^\epsilon \bar m^\epsilon - \frac{\epsilon^2}{2} \frac{f^\epsilon}{\mathcal M^\epsilon} \partial_t \mathcal M^\epsilon}(t,x,v) dx d\mu(v)\\
		\mathcal B_\epsilon = - \frac 1 \epsilon \int\int \frac{f^\epsilon(t,x,v)}{\mathcal M^\epsilon(t,x,v)} Af^\epsilon(t,x,v) dx d\mu(v).
	\end{gather*}

	On the one hand, the weight $\mathcal M^\epsilon$ has been chosen in order to satisfy $\epsilon \bar m^\epsilon - \frac{\epsilon^2}{2} \frac{\partial_t \mathcal M^\epsilon}{\mathcal M^\epsilon} = 0$. Moreover, since $f^\epsilon = \rho^\epsilon \mathcal M - Lf^\epsilon$ and $\int_\vSet Lf^\epsilon d\mu = 0$, we get
	\begin{align*}
		\mathcal A_\epsilon
			&= \frac{1}{\epsilon^2} \int\int \frac{f^\epsilon(t,x,v)}{\mathcal M^\epsilon(t,x,v)} Lf^\epsilon(t,x,v) dx d\mu(v)\\
			&= \frac{1}{\epsilon^2} \int_\xSet e^{-2\zeta^\epsilon(t,x)} \rho^\epsilon(t,x) \int_\vSet Lf^\epsilon(t,x,v) d\mu(v) dx\\
			&\quad - \frac{1}{\epsilon^2} \int\int \frac{\abs{Lf^\epsilon(t,x,v)}^2}{\mathcal M^\epsilon(t,x,v)} d\mu(v) dx\\
			&= - \frac{1}{\epsilon^2} \int\int \frac{\abs{Lf^\epsilon(t,x,v)}^2}{\mathcal M^\epsilon(t,x,v)} d\mu(v) dx = - \frac{1}{\epsilon^2} \norm{L f^\epsilon(t)}_{L^2(\mathcal M^\epsilon(t)^{-1})}^2.
	\end{align*}

	On the other hand, by means of an integration by parts (we take a primitive of $f^\epsilon \partial_{x_i}f^\epsilon$ and a derivative of $\frac{1}{\mathcal M^\epsilon}$), we write
	\begin{align*}
		\mathcal B_\epsilon
			&= - \frac 1 \epsilon \int\int \scald{a(v)}{\frac{f^\epsilon(t,x,v) \nabla_x f^\epsilon(t,x,v)}{\mathcal M^\epsilon(t,x,v)}} dx d\mu(v)\\
			&= - \frac 1 \epsilon \int\int \scald{a(v)}{\frac{\frac 1 2 \abs{f^\epsilon(t,x,v)}^2  \nabla_x \mathcal M^\epsilon(t,x,v)}{\abs{\mathcal M^\epsilon(t,x,v)}^2}} dx d\mu(v)\\
			&= - \frac{1}{2\epsilon} \int\int \abs{\frac{f^\epsilon(t,x,v)}{\mathcal M^\epsilon(t,x,v)}}^2 A\mathcal M^\epsilon(t,x,v) dx d\mu(v).
	\end{align*}
	Then, by definition of $\mathcal M^\epsilon$ and $A$, we have
	\begin{equation*}
		\mathcal B_\epsilon = - \frac{1}{\epsilon} \int_\xSet \scald{\nabla_x \zeta^\epsilon(t,x)}{\int_\vSet \frac{\abs{f^\epsilon(t,x,v)}^2}{\mathcal M^\epsilon(t,x,v)} a(v) d\mu(v)} dx.
	\end{equation*}
	Using once again the identity $f^\epsilon = \rho^\epsilon \mathcal M - Lf^\epsilon$, we get
	\begin{align*}
		\mathcal B_\epsilon
			&= - \frac{1}{\epsilon} \int_\xSet e^{-2\zeta^\epsilon(t,x)} \abs{\rho^\epsilon(t,x)}^2 \scald{\nabla_x \zeta^\epsilon(t,x)}{\int_\vSet a(v) \mathcal M(v) d\mu(v)} dx\\
			&\quad - \frac{1}{\epsilon} \int_\xSet \scald{\nabla_x \zeta^\epsilon(t,x)}{\int_\vSet \frac{\abs{Lf^\epsilon(t,x,v)}^2}{\mathcal M^\epsilon(t,x,v)} a(v) d\mu(v)} dx\\
			&\quad + \frac{2}{\epsilon} \int\int e^{-2\zeta^\epsilon(t,x)} \scald{\nabla_x \zeta^\epsilon(t,x)}{a(v)} \rho^\epsilon(t,x) Lf^\epsilon(t,x,v) d\mu(v) dx\\
			&= \mathcal B_\epsilon^1 + \mathcal B_\epsilon^2 + \mathcal B_\epsilon^3.
	\end{align*}
	\begin{itemize}
		\item Since $a$ is centered for $\mathcal M \mu$, $\mathcal B_\epsilon^1 = 0$.

		\item For $t \leq \tauLze$, we have $\norm{\zeta^\epsilon(t)}_{C^1_x} \leq \Lambda$ and we assumed $\Lambda \leq \parths{4 \norm{a}_{L^\infty} \epsilon}^{-1}$. Thus, we get
		\begin{equation*}
			\forall t \leq \tauLze, \abs{\mathcal B_\epsilon^2} \leq \frac{1}{4\epsilon^2} \norm{L f^\epsilon(t)}_{L^2(\mathcal M^\epsilon(t)^{-1})}^2.
		\end{equation*}

		\item Using the Young inequality, we have
		\begin{multline*}
			\abs{\mathcal B_\epsilon^3} \leq \frac{1}{4\epsilon^2} \norm{L f^\epsilon(t)}_{L^2(\mathcal M^\epsilon(t)^{-1})}^2\\
			+ 4 \norm{a}_{L^2(\mathcal M)}^2 \norm{\nabla_x \zeta^\epsilon(t)}_{C(\xSet)}^2 \int_\xSet e^{-2\zeta^\epsilon(t,x)}  \abs{\rho^\epsilon(t,x)}^2 dx,
		\end{multline*}
		with $\norm{a}_{L^2(\mathcal M)}^2 \doteq \int_V \abs{a(v)}^2 \mathcal M(v) d\mu(v)$. Now using the Cauchy-Schwarz inequality and the identity $\int_\vSet \mathcal M(v) d\mu(v) = 1$, we have
		\begin{equation*}
			\abs{\rho^\epsilon(t,x)}^2 \leq \int_\vSet \frac{\abs{f^\epsilon(t,x,v)}^2}{\mathcal M^\epsilon(t,x,v)} d\mu(v) \int_\vSet \mathcal M^\epsilon(t,x,v) d\mu(v) = \norm{f^\epsilon(t)}_{L^2(\mathcal M^\epsilon(t)^{-1})}^2 e^{2\zeta^\epsilon(t,x)},
		\end{equation*}
		hence
		\begin{equation*}
			\abs{\mathcal B_\epsilon^3} \leq \frac{1}{4\epsilon^2} \norm{L f^\epsilon(t)}_{L^2(\mathcal M^\epsilon(t)^{-1})}^2 + 4 \norm{a}_{L^2(\mathcal M)}^2 \norm{\nabla_x \zeta^\epsilon(t)}_{C(\xSet)}^2 \norm{f^\epsilon(t)}_{L^2(\mathcal M^\epsilon(t)^{-1})}^2.
		\end{equation*}
	\end{itemize}

	We finally get, for $t \leq \tauLze$,
	\begin{multline*}
		\partial_t \norm{f^\epsilon(t)}_{L^2(\mathcal M^\epsilon(t)^{-1})}^2 \leq - \frac{1}{2\epsilon^2} \norm{L f^\epsilon(t)}_{L^2(\mathcal M^\epsilon(t)^{-1})}^2\\
		+ 4 \norm{a}_{L^2(\mathcal M)}^2 \norm{\nabla_x \zeta^\epsilon(t)}_{C(\xSet)}^2 \norm{f^\epsilon(t)}_{L^2(\mathcal M^\epsilon(t)^{-1})}^2.
	\end{multline*}
	For $t \leq \tauLze$, Gronwall's Lemma implies
	\begin{multline*}
		\norm{f^\epsilon(t)}_{L^2(\mathcal M^\epsilon(t)^{-1})}^2 + \int_0^t \frac{1}{2\epsilon^2} \norm{L f^\epsilon(t)}_{L^2(\mathcal M^\epsilon(t)^{-1})}^2 dt\\
		\leq \norm{f^\epsilon_0}_\fSet^2 e^{4 \norm{a}_{L^2(\mathcal M)}^2 \int_0^t \norm{\nabla_x \zeta^\epsilon(s)}_{C(\xSet)}^2 ds}.
	\end{multline*}
	Since, for $t \in \tSet$, we have
	\begin{equation*}
		\norm{\cdot}_{L^2(\mathcal M^\epsilon(t)^{-1})}^2
		\geq \norm{\cdot}_\fSet^2 e^{-2\norm{\zeta^\epsilon(t)}_{C(\xSet)}},
	\end{equation*}
	we get, for $t \leq \tauLze$,
	\begin{multline*}
		\norm{f^\epsilon(t)}_\fSet^2 + \int_0^t \frac{1}{2\epsilon^2} \norm{L f^\epsilon(t)}_\fSet^2 dt\\
		\leq \norm{f^\epsilon_0}_\fSet^2 \exp\parths{2 \sup_{s \in [0,t]} \norm{\zeta^\epsilon(s)}_{C(\xSet)} + 4 \norm{a}_{L^2(\mathcal M)}^2 \int_0^t \norm{\nabla_x \zeta^\epsilon(s)}_{C(\xSet)}^2 ds}.
	\end{multline*}
	To conclude, it is sufficient to recall that for $t \leq \tauLze$, we have $\norm{\zeta^\epsilon(t)}_{C^1_x} \leq \Lambda$.
\end{proof}

\section{Martingale problems and perturbed test functions} \label{sec:martingale_problems}

The proof of \cref{thm:main_result} heavily relies on the notion of martingale problems as introduced in \cite{stroock2006multidimensional}. To identify a limit point of $\seq{\law{\rho^\epsilon}}{\epsilon > 0}$, we characterize it by a family of martingales and take the limit when $\epsilon \to 0$ in their martingale properties.

The characterization of the distribution of a solution of a SPDE in terms of martingales is based on the Markov property satisfied by this solution. However, we expect a limit point $\rho_\Lambda$ of the stopped process $\rho\stoppedL$ to be stopped at some $\tau_\Lambda(\zeta)$, as mentioned in \cref{subsec:bound_L2}. Since $\tau_\Lambda(\zeta)$ is not a stopping time for the filtration generated by $\rho_\Lambda$, this latter process should not be Markov. Thus, we need to consider the convergence of the couple $(\rho^\epsilon,\zeta^\epsilon)$ instead of just $\rho^\epsilon$. We will see more precisely in \cref{sec:convergence} at which point this matter occurs.

\subsection{Generator and martingales} \label{subsec:generator_martingale}

Also note that $(f^\epsilon,\zeta^\epsilon)$ is not a Markov process. As in \cite{debussche2011diffusion}, we consider the coupled process with $\bar m^\epsilon$ and thus consider the $\fSet \times C^1_x \times E$-valued Markov process $X^\epsilon \doteq (f^\epsilon,\zeta^\epsilon,\bar m^\epsilon)$.

Denote by $\mathcal L^\epsilon$ the infinitesimal generator of $X^\epsilon$. Since $f^\epsilon$ is solution of \cref{eq:SPDE} and since $\partial_t \zeta^\epsilon = \frac{1}{\epsilon} \bar m^\epsilon$, the infinitesimal generator has an expression of the type
\begin{equation} \label{eq:def_L_epsilon}
	\mathcal L^\epsilon = \frac 1 \epsilon \mathcal L_1 + \frac{1}{\epsilon^2} \mathcal L_2
\end{equation}
with
\begin{gather*}
	\mathcal L_1 \phi(f,z,n) = D_f\phi(f,z,n)(- Af + nf) + D_z\phi(f,z,n)(n)\\
	\mathcal L_2 \phi(f,z,n) = D_f\phi(f,z,n)(Lf) + B\phi(f,z,n),
\end{gather*}
where $B$ is the infinitesimal generator of $\bar m$. The domain of this generator contains the class of good test functions defined below. The terminology of "good test function" is inherited from \cite{debussche2011diffusion}, although our definition is a little more restrictive.
\begin{definition} \label{def:good_test_function}
	A function $\phi : \fSet \times C^1_x \times E \to \mathbb R$ is called a good test function if
	\begin{itemize}
		\item It is continuously differentiable on $\fSet \times C^1_x \times E$ with respect to the first and second variables.
		\item For $\ell \in \set{1,2}$, $B(\phi(f,z,\cdot)^\ell)$ is defined for all $(f,z) \in \fSet \times C^1_x$, and
		\begin{equation*}
			B(\phi^\ell) : \fSet \times C^1_x \times E \to \mathbb R
		\end{equation*}
		is continuous.
		\item If we identify the differential $D_f$ with the gradient, then for $f \in \fSet$, $z \in C^1_x$ and $n \in E$, we have
		\begin{equation} \label{eq:good_test_function_grad_H1}
			D_f \phi(f,z,n) \in H^1(\xSet \times \vSet,dx \mathcal M^{-1}(v) d\mu(v)).
		\end{equation}
		\item The functions $\phi$, $D_z \phi$, $D_f \phi$ and $AD_f \phi$ have at most polynomial growth in the following sense: there exists $C_\phi > 0$ such that for $f, h \in \fSet$, $z \in C^1_x$ and $n_1, n_2 \in E$, we have
		\begin{equation}
			\begin{split} \label{eq:estimates_good_test_function}
			\abs{\phi(f,z,n_1)} &\leq C_\phi \parths{1 + S_1^{3}} \parths{1 + S_2^{b+2}}\\
			\abs{D_f \phi(f,z,n_1)(Ah)} &\leq C_\phi \parths{1 + S_1^{3}} \parths{1 + S_2^{b+2}}\\
			\abs{D_f \phi(f,z,n_1)(n_2 h)} &\leq C_\phi \parths{1 + S_1^{3}} \parths{1 + S_2^{b+2}}\\
			\abs{D_f \phi(f,z,n_1)(Lh)} &\leq C_\phi \parths{1 + S_1^{3}} \parths{1 + S_2^{b+2}}\\
			\abs{D_z \phi(f,z,n_1)(n_2)} &\leq C_\phi \parths{1 + S_1^{3}} \parths{1 + S_2^{b+2}},
			\end{split}
		\end{equation}
		where $S_1 = \norm{f}_\fSet \vee \norm{h}_\fSet$ and $S_2 = \norm{n_1}_E \vee \norm{n_2}_E$.
	\end{itemize}
\end{definition}

See \cref{subsec:perturbed_test_functions} for a justification of the need to consider growth as appearing in \cref{eq:estimates_good_test_function}.

A consequence of \cref{eq:good_test_function_grad_H1} is that $A D_f \phi$ is well-defined. Thus, for $f,h \in \fSet$, $z \in C^1_x$ and $n \in E$, we can define
\begin{equation*}
	D_f \phi(f,z,n)(Ah) \doteq -\scalf{AD_f \phi(f,z,n)}{h},
\end{equation*}
even though $Ah$ is not necessarily defined in $\fSet$.

The class of test-function introduced in \cref{def:good_test_function} is chosen such that the \cref{prop:good_test_function_martingale} holds.

\begin{proposition} \label{prop:good_test_function_martingale}
	Let $\phi$ be a good test function in the sense of \cref{def:good_test_function}. Define for all $t \geq 0$
	\begin{equation} \label{eq:def_premartingale}
		M_\phi^\epsilon(t) \doteq \phi(X^\epsilon(t)) - \phi(X^\epsilon(0)) -  \int_0^t \mathcal L^\epsilon \phi (X^\epsilon(s))ds,
	\end{equation}
	and consider the stopping time $\tauLe$ defined by \cref{eq:def_tauLe}.

	Then $M\stoppedL_\phi$ is a càdlàg $\seq{\mathcal F^\epsilon_t}{t}$-martingale and
	\begin{align*}
		\forall t \in \tSet, \esp{\abs{M\stoppedL_\phi(t)}^2}
			&= \esp{\int_0^{t \wedge \tauLe} \parths{\mathcal L^\epsilon (\phi^2) - 2 \phi \mathcal L^\epsilon \phi}(X^\epsilon(s))ds}\\
			&= \frac{1}{\epsilon^2} \esp{\int_0^{t \wedge \tauLe} \parths{B (\phi^2) - 2 \phi B \phi}(X^\epsilon(s))ds}.
	\end{align*}
\end{proposition}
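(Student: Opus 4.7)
My plan is to apply Dynkin's formula to the Markov process $X^\epsilon = (f^\epsilon, \zeta^\epsilon, \bar m^\epsilon)$, once with the good test function $\phi$ (to establish the martingale property of $M^\epsilon_\phi$), and once with $\phi^2$ (to compute its quadratic variation). The stopping at $\tauLe$ combined with the polynomial growth in \cref{eq:estimates_good_test_function} and the moment assumptions provides the integrability required to upgrade local martingales into true $L^2$-martingales. The first observation is that $X^\epsilon$ is Markov with respect to $\seq{\mathcal F^\epsilon_t}{t}$: $f^\epsilon$ is the pathwise solution of \cref{eq:SPDE} from \cref{prop:pathwise_solution}, $\zeta^\epsilon$ evolves deterministically via $\partial_t \zeta^\epsilon = \epsilon^{-1} \bar m^\epsilon$, and $\bar m^\epsilon(t) = \bar m(t/\epsilon^2)$ is Markov with generator $\epsilon^{-2} B$. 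A chain-rule computation identifies the infinitesimal generator of $X^\epsilon$ on good test functions as $\mathcal L^\epsilon = \epsilon^{-1} \mathcal L_1 + \epsilon^{-2} \mathcal L_2$ with the expressions in \cref{eq:def_L_epsilon}, and Dynkin's formula then yields that $M^\epsilon_\phi$ is a local $\seq{\mathcal F^\epsilon_t}{t}$-martingale.

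Next I would upgrade this to a true $L^2$-martingale after stopping at $\tauLe$. On $[0, \tauLe]$, the three components of $X^\epsilon$ satisfy deterministic bounds: $\norm{f\stoppedL(s)}_\fSet \lesssim_\Lambda \norm{f^\epsilon_0}_\fSet$ by \cref{prop:L2_bound}, $\norm{\zeta\stoppedL(s)}_{C^1_x} \leq \Lambda$ by definition of $\tauLze$, and $\norm{\bar m\stopped(s)}_E \leq \epsilon^{-\alpha} \vee \norm{\bar m(0)}_E$ by the remark following \cref{lem:taue_to_infty}. Plugging these into the growth estimates \cref{eq:estimates_good_test_function} and taking expectations with the aid of \cref{hyp:f0,hyp:m_L2_all_starting_point,hyp:m_moments} (the cubic growth in $\norm{f}_\fSet$ is absorbed by the moment of order $24$ of $f^\epsilon_0$, and the $(b+2)$-growth in $\norm{n}_E$ by the moment of order $8(b+2)$ of $\nu$), one obtains that $\phi(X^\epsilon)$, $\mathcal L^\epsilon \phi(X^\epsilon)$, $\phi^2(X^\epsilon)$ and $\mathcal L^\epsilon(\phi^2)(X^\epsilon)$ are uniformly $L^2(\Omega)$-bounded over $s \in [0, T \wedge \tauLe]$ for each fixed $\epsilon$. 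A standard localization argument (approximating $\tauLe$ by an increasing sequence of bounded stopping times and passing to the limit by dominated convergence) then promotes the stopped local martingale $M\stoppedL_\phi$ into a true $L^2$-martingale.

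For the quadratic variation I would apply Dynkin's formula to the function $\phi^2$ and invoke the identity $\esp{\abs{M\stoppedL_\phi(t)}^2} = \esp{\int_0^{t \wedge \tauLe}(\mathcal L^\epsilon(\phi^2) - 2\phi \mathcal L^\epsilon \phi)(X^\epsilon(s))ds}$, which is standard given the $L^2$-integrability of both martingales. The carré du champ simplifies sharply because $\mathcal L_1$ and the piece $D_f \phi(Lf)$ of $\mathcal L_2$ are first-order differential operators in $(f,z)$, hence obey the Leibniz rule $V(\phi^2) = 2\phi V\phi$. Their contributions therefore cancel in $\mathcal L^\epsilon(\phi^2) - 2\phi \mathcal L^\epsilon \phi$, leaving only the genuinely stochastic term $\epsilon^{-2}(B(\phi^2) - 2\phi B\phi)$ coming from the Markov generator of $\bar m$, as claimed.

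The delicate step is the second one: checking that the tight numerology of polynomial growth in \cref{eq:estimates_good_test_function} is compatible with the combined moment bounds on $f^\epsilon_0$, on $\bar m\stopped$, and on $\zeta\stoppedL$, and in particular that $\phi^2(X^\epsilon)$ still lies in $L^2(\Omega)$, which is exactly where the moment of order $24$ on $f^\epsilon_0$ in \cref{hyp:f0} is invoked. Once this integrability is secured, Dynkin's formula and the cancellation of the first-order pieces in the carré du champ complete the argument with only routine manipulations.
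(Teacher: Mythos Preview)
Your outline is correct in spirit and the final carr\'e du champ simplification (cancellation of the first-order pieces $\mathcal L_1$ and $D_f\phi(Lf)$, leaving only $\epsilon^{-2}(B\phi^2 - 2\phi B\phi)$) matches the paper exactly. However, the two sentences ``a chain-rule computation identifies the infinitesimal generator \ldots\ and Dynkin's formula then yields that $M^\epsilon_\phi$ is a local martingale'' hide the entire content of the first half of the proposition. In this setting---an infinite-dimensional coupled process, unbounded test functions, and $\bar m$ only c\`adl\`ag and stochastically continuous---there is no off-the-shelf Dynkin formula to invoke; verifying that $(\phi,\mathcal L^\epsilon\phi)$ lies in the extended generator is precisely what must be proved. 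The paper does this by hand: it discretizes $[s,t]$, splits the increment into three pieces $r_f$, $r_z$, $r_n$ according to which coordinate moves, handles $r_f$ and $r_z$ by the deterministic evolution of $f^\epsilon$ and $\zeta^\epsilon$, handles $r_n$ by the Markov property of $\bar m$ (applied to $\mathcal F_{t_i}$-measurable, not merely deterministic, test functions), and passes to the limit using the growth bounds \cref{eq:estimates_good_test_function} together with \cref{prop:L2_bound} and \cref{eq:taue_bounds_m} to obtain uniform integrability. Your localization argument would be fine once this local-martingale property is actually established.

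Similarly, for the variance formula you write that the identity is ``standard given the $L^2$-integrability of both martingales.'' This is true in principle (it is the Ethier--Kurtz mechanism: if both $M_\phi$ and $M_{\phi^2}$ are martingales then $M_\phi^2 - \int(\mathcal L^\epsilon\phi^2 - 2\phi\mathcal L^\epsilon\phi)\,ds$ is a martingale), but the paper again does not cite this and instead proves it directly via a three-step discretization argument, because $\bar m$ is only c\`adl\`ag and the author explicitly refrains from identifying $N\stoppedL$ as a quadratic variation. So your route is a legitimate higher-level alternative, but to make it rigorous you would need to either justify that the abstract martingale-problem machinery applies to this process (which requires roughly the same work), or fall back on the paper's explicit discretization. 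The moment-counting you do (why order $24$ for $f^\epsilon_0$ and $8(b+2)$ for $\nu$ are the right thresholds) is correct and matches the paper's use of these hypotheses.
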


This result is expected to holds as in the standard framework \cite{debussche2017diffusion}. However, due to the presence of stopping times, the proof is very technical.


\begin{proof}
	Note that in this section, $\epsilon$ is fixed, it is therefore not required to prove bounds which are uniform with respect to $\epsilon$.

	Let $\phi$ be a good test function. Observe that $\phi$ and $\phi^2$ are in the domain of $\mathcal L^\epsilon$, by means of \cref{def:good_test_function}.

	Let $s,t \in \tSet$, $\delta > 0$ and let $s = t_1 < ... < t_n = t$ be a subdivision of $[s,t]$ such that $\max_i \abs{t_{i+1}-t_i} = \delta$. Let $g$ be a $\mathcal F^\epsilon_s$-measurable and bounded function. To simplify notation, let
	\begin{equation*}
		f_i \doteq f\stoppedL(t_i), \zeta_i \doteq \zeta\stoppedL(t_i), m_i \doteq \bar m\stoppedL(t_i).
	\end{equation*}
	Then, we have
	\begin{multline*}
		\esp{\parths{M\stoppedL_\phi(t) - M\stoppedL_\phi(s)}g}\\
		\begin{split}
		&= \esp{\parths{\phi(X\stoppedL(t)) - \phi(X\stoppedL(s)) - \int_{s \wedge \tauLe}^{t \wedge \tauLe} \mathcal L^\epsilon \phi(X^\epsilon(u))du}g}\\
		&= r_f + r_z + r_n,
		\end{split}
	\end{multline*}
	where
	\begin{multline*}
		r_f = \sum_{i=1}^{n-1} \esp{\parths{\phi(f_{i+1},\zeta_{i+1},m_{i+1}) - \phi(f_{i},\zeta_{i+1},m_{i+1}) \vphantom{\int_{t_i \wedge \tauLe}^{t_{i+1} \wedge \tauLe}} \right. \right.\\
		\quad \left. \left. - \int_{t_i \wedge \tauLe}^{t_{i+1} \wedge \tauLe} D_f \phi(X^\epsilon(u))(- \frac 1 \epsilon Af^\epsilon(u) + \frac{1}{\epsilon^2}Lf^\epsilon(u) + \frac 1 \epsilon f^\epsilon(u) \bar m^\epsilon(u)) du}g},
	\end{multline*}
	\begin{multline*}
		r_z = \sum_{i=1}^{n-1} \esp{\parths{\phi(f_{i},\zeta_{i+1},m_{i+1}) - \phi(f_{i},\zeta_{i},m_{i+1}) \vphantom{\int_{t_i \wedge \tauLe}^{t_{i+1} \wedge \tauLe}}\right. \right.\\
		\quad \left. \left. - \int_{t_i \wedge \tauLe}^{t_{i+1} \wedge \tauLe} D_z \phi(X^\epsilon(u))(\frac 1 \epsilon \bar m^\epsilon(u)) du}g},
	\end{multline*}
	and
	\begin{equation*}
		r_n = \sum_{i=1}^{n-1} \esp{\parths{\phi(f_{i},\zeta_{i},m_{i+1}) - \phi(f_{i},\zeta_{i},m_{i}) - \int_{t_i \wedge \tauLe}^{t_{i+1} \wedge \tauLe} \frac{1}{\epsilon^2} B\phi(X^\epsilon(u)) du}g}.
	\end{equation*}
	Straightforward computations lead to
	\begin{equation*}
		r_f = \esp{\parths{\int_s^t r'_f(u)du}g}, r_z = \esp{\parths{\int_s^t r'_z(u)du}g}
	\end{equation*}
	with
	\begin{align*}
		r'_f(u)
			&=\sum_{i=1}^{n-1} \indset{[t_i \wedge \tauLe,t_{i+1} \wedge \tauLe]}(u) \lrbracket{D_f \phi(f\stoppedL(u),\zeta_{i+1},m_{i+1}) - D_f \phi(X\stoppedL(u))}(\partial_t f\stoppedL(u)),\\
		r'_z(u)
			&=\sum_{i=1}^{n-1} \indset{[t_i \wedge \tauLe,t_{i+1} \wedge \tauLe]}(u) \lrbracket{D_z \phi(f_i,\zeta\stoppedL(u),m_{i+1}) - D_z \phi(X\stoppedL(u))}(\partial_t \zeta\stoppedL(u)).
	\end{align*}
	Let us now check that $r_n = \esp{\parths{\int_s^t r'_n(u)du}g}$ with
	\begin{equation*}
		r'_n(u) = \frac{1}{\epsilon^2} \sum_{i=1}^{n-1} \ind{[t_i \wedge \tauLe,t_{i+1} \wedge \tauLe]}(u) \lrbracket{B \phi(f_i,\zeta_i,\bar m\stoppedL(u)) - B \phi(X\stoppedL(u))}.
	\end{equation*}
	For $\theta \in C(E) \cap L^1(E)$, the Markov property for $\bar m$ yields
	\begin{equation*}
		\esp{\theta(\bar m(t)) \mid \mathcal F_s} = P_{t-s}\theta(\bar m(s)).
	\end{equation*}
	Usually, this property is written for $\theta$ deterministic, continuous and bounded, but it is straightforward to check that it is still satisfied when $\theta \in C(E) \cap L^1(E)$ $\mathcal F_s$-measurable. The standard proof to show that $\bar m$ solves the martingale problem associated to $B$ (see for example \cite{debussche2017diffusion}, Theorem B.3) can be applied, and we get that, for $\theta \in D(B)$,
	\begin{equation*}
		t \mapsto \theta(\bar m(t)) - \theta(\bar m(0)) - \int_0^t B\theta(\bar m(u))du
	\end{equation*}
	is an integrable $\mathcal F_t$-martingale. By rescaling the time to retrieve $\bar m^\epsilon$, stopping the martingale at $\tauLe$ and using a conditioning argument ($g$, $f_i$ and $\zeta_i$ are $\mathcal F_{t_i}$-measurable), we get
	\begin{equation*}
		\esp{\parths{\phi(f_{i},\zeta_{i},m_{i+1}) - \phi(f_{i},\zeta_{i},m_{i})}g} = \esp{g \int_{t_i \wedge \tauLe}^{t_{i+1} \wedge \tauLe} \frac{1}{\epsilon^2} B\phi(X^\epsilon(u)) du}.
	\end{equation*}
	Hence, we can write $r_n = \esp{\parths{\int_s^t r'_n(u)du}g}$ as claimed above.

	Since the estimates given by \cref{eq:taue_bounds_m,prop:L2_bound} are uniform for $t \in [0,T]$, we can use \cref{eq:estimates_good_test_function} with $S_1 \lesssim_\Lambda \norm{f^\epsilon_0}_\fSet$ and $S_2 \leq \epsilon^{-\alpha} \vee \norm{\bar m(0)}_E$. This leads to
	\begin{equation} \label{eq:subpol_condition}
		\sup_{u \in [0,T]} \abs{r'_*(u)}^2 \lesssim_{\phi,\Lambda,\epsilon} \parths{1 + \norm{f^\epsilon_0}_\fSet^{6}} \parths{1 + \norm{\bar m(0)}_E^{2(b+2)}},
	\end{equation}
	where $r'_* \in \set{r'_f,r'_z,r'_n}$. Hence, the Cauchy-Schwarz inequality, \cref{hyp:m_L2_all_starting_point,hyp:f0} yield
	\begin{align}
		\esp{\sup_{u \in [0,T]} \abs{r'_*(u)}^2}
			&\lesssim_{\phi,\Lambda,\epsilon} \esp{\parths{1 + \norm{f^\epsilon_0}_\fSet^{12}}}^{\frac 1 2} \esp{\parths{1 + \norm{\bar m(0)}_E^{4(b+2)}}}^{\frac 1 2} \nonumber\\
			&< \infty. \label{eq:subpol_conclusion}
	\end{align}
	Thus, the terms $r'_*$ are uniformly integrable with respect to $(u,\omega)$. Recall that $f\stoppedL$ and $\zeta\stoppedL$ are almost surely continuous and that $\bar m\stoppedL$ is stochastically continuous. Then, the terms $r'_*$ converge to $0$ in probability when $\delta \to 0$. By uniform integrability, the terms $r_*$ converge to $0$, which proves that $M\stoppedL_\phi$ is a $\seq{\mathcal F^\epsilon_t}{t}$-martingale. Note that we only used moments of order $12$ and $4(b+2)$, instead of $24$ and $8(b+2)$ as assumed in \cref{hyp:m_L2_all_starting_point,hyp:f0}. Hence, this proof can be adapted to establish that $M\stoppedL_{\phi^2}$ is also a $\seq{\mathcal F^\epsilon_t}{t}$-martingale.


	It remains to prove the formulas for the variance. This is done in several steps, following Appendix B of \cite{debussche2017diffusion}. Since $\phi$ and $\phi^2$ belong to the domain of $\mathcal L^\epsilon$, the process
	\begin{equation*}
		N^\epsilon(t) = \int_0^t \parths{\mathcal L^\epsilon (\phi^2) - 2 \phi \mathcal L^\epsilon \phi}(X^\epsilon(s))ds = \frac{1}{\epsilon^2} \int_0^t \parths{B (\phi^2) - 2 \phi B \phi}(X^\epsilon(s))ds,
	\end{equation*}
	is well-defined.

	The proof of the second equality is straightforward: since $D = \mathcal L^\epsilon - \frac{1}{\epsilon^2}B$ is a first order differential operator, we have $D(\phi^2) - 2 \phi D \phi = 0$.

	Let $0 = t_0 < t_1 < ... < t_n = T$ be a subdivision of $[0,T]$ of step $\max \abs{t_{i+1}-t_i} = \delta$.

	\paragraph{Step 1:} We claim that the following convergence is satisfied in $\mathbb L^2 \doteq \mathbb L^2(\Omega)$
	\begin{equation} \label{eq1:proof:prop:good_test_function_martingale}
		N\stoppedL(t) = \lim_{\delta \to 0} \sum_i \esp{N\stoppedL(t \wedge t_{i+1}) - N\stoppedL(t \wedge t_i) \mid \mathcal F^\epsilon_{t_i}}.
	\end{equation}
	Let $\Delta_i \doteq N\stoppedL(t \wedge t_{i+1}) - N\stoppedL(t \wedge t_i) - \esp{N\stoppedL(t \wedge t_{i+1}) - N\stoppedL(t \wedge t_i) \mid \mathcal F^\epsilon_{t_i}}$ so that \cref{eq1:proof:prop:good_test_function_martingale} is equivalent to $\sum_i \Delta_i \xrightarrow[\delta \to 0]{} 0$ in $\mathbb L^2$. Note that $\esp{\Delta_i \Delta_j} = 0$ for $i \neq j$. Hence, we have $\esp{\abs{\sum_i \Delta_i}^2} = \esp{\sum_i \abs{\Delta_i}^2}$. Using that a conditional expectation is an orthogonal projection in $\mathbb L^2$, we get
	\begin{align*}
		\esp{\abs{\Delta_i}^2}
			&\leq \esp{\abs{N\stoppedL(t \wedge t_{i+1}) - N\stoppedL(t \wedge t_i)}^2}\\
			&\lesssim_{\epsilon} \esp{\abs{\int_{t \wedge t_{i} \wedge \tauLe}^{t \wedge t_{i+1} \wedge \tauLe} \parths{B (\phi^2) - 2 \phi B \phi}(X^\epsilon(s))ds}^2},
	\end{align*}
	By means of \cref{def:good_test_function,hyp:B_subpolynomial},
	\begin{equation*}
		\abs{\parths{B (\phi^2) - 2 \phi B \phi}(X^\epsilon(s))}^2 \lesssim_{\phi} \parths{1 + \norm{f^\epsilon(s)}_\fSet^{12}} \parths{1 + \norm{\bar m^\epsilon(s)}_E^{4(b+2)}}.
	\end{equation*}
	As in \cref{eq:subpol_condition,eq:subpol_conclusion} (using moments of order $24$ and $8(b+2)$ instead of $12$ and $4(b+2)$), \cref{prop:L2_bound,eq:taue_bounds_m} lead to
	\begin{equation*}
		\esp{\sup_{s \in [0,T]} \abs{\parths{B (\phi^2) - 2 \phi B \phi}(X\stoppedL(s))}^2} \lesssim_{\phi,\Lambda,\epsilon} 1.
	\end{equation*}
	Since $t \wedge t_{i} \wedge \tauLe - t \wedge t_{i+1} \wedge \tauLe \leq t_{i+1} - t_i$, we get
	\begin{equation*}
		\esp{\abs{\Delta_i}^2} \lesssim_{\phi,\Lambda,\epsilon} (t_{i+1} - t_i)^2,
	\end{equation*}
	which then yields $\esp{\abs{\sum_i \Delta_i}^2} \lesssim_{\phi,\Lambda,\epsilon} T \delta \xrightarrow[\delta \to 0]{} 0$, which proves \cref{eq1:proof:prop:good_test_function_martingale}.

	\paragraph{Step 2:} We claim that
	\begin{equation} \label{eq2:proof:prop:good_test_function_martingale}
		\esp{\sum_i \abs {R_{t_i,t_{i+1}}}} \lesssim_{\phi,\Lambda,\epsilon} \delta^{1/2},
	\end{equation}
	where, for $0 \leq t < t' \leq T$,
	\begin{align}
		R_{t,t'}
			&= \abs{M\stoppedL_\phi(t') - M\stoppedL_\phi(t)}^2 - \abs{\phi(X\stoppedL(t')) - \phi(X\stoppedL(t))}^2 \label{eq:def_R_step2}\\
			&= \abs{\int_{t \wedge \tauLe}^{t' \wedge \tauLe} \mathcal L^\epsilon \phi (X^\epsilon(s))ds}^2 - 2 \parths{\phi(X\stoppedL(t')) - \phi(X\stoppedL(t))} \int_{t \wedge \tauLe}^{t' \wedge \tauLe} \mathcal L^\epsilon \phi (X^\epsilon(s))ds. \nonumber
	\end{align}
	We can write
	\begin{equation} \label{eq3:proof:prop:good_test_function_martingale}
		\begin{split}
		\abs{\phi(X\stoppedL(t')) - \phi(X\stoppedL(t))}^2
			&= M\stoppedL_{\phi^2}(t') - M\stoppedL_{\phi^2}(t)\\
			&\quad - 2\phi(X\stoppedL(t))\parths{M\stoppedL_{\phi}(t') - M\stoppedL_{\phi}(t)}\\
			&\quad + \int_{t \wedge \tauLe}^{t' \wedge \tauLe} \mathcal L^\epsilon (\phi^2)(X^\epsilon(s))ds\\
			&\quad - 2 \phi(X\stoppedL(t)) \int_{t \wedge \tauLe}^{t' \wedge \tauLe} \mathcal L^\epsilon \phi(X^\epsilon(s))ds.
		\end{split}
	\end{equation}
	As established in the first part of the proof, $M\stoppedL_{\phi^2}$ and $M\stoppedL_{\phi}$ are $\seq{\mathcal F^\epsilon_s}{s}$-martingales. Moreover, $\phi(X\stoppedL_t)$ is $\mathcal F^\epsilon_t$-measurable. Thus, taking the expectation in \cref{eq3:proof:prop:good_test_function_martingale} yields
	\begin{multline*}
		\esp{\abs{\phi(X\stoppedL(t')) - \phi(X\stoppedL(t))}^2} =\\
		\esp{\int_{t \wedge \tauLe}^{t' \wedge \tauLe} \parths{\mathcal L^\epsilon (\phi^2)(X^\epsilon(s))ds - 2 \phi(X\stoppedL(t)) \mathcal L^\epsilon \phi(X^\epsilon(s))}ds}.
	\end{multline*}
	As in Step 1, by \cref{def:good_test_function,hyp:B_subpolynomial,eq:taue_bounds_m,prop:L2_bound}, the integrand is bounded by $\parths{1 + \norm{f^\epsilon_0}_\fSet^{6}} \parths{1 + \norm{\bar m(0)}_E^{2(b+2)}}$ (up to a constant depending on $\phi$, $\Lambda$ and $\epsilon$) and we get
	\begin{equation} \label{eq4:proof:prop:good_test_function_martingale}
		\esp{\abs{\phi(X\stoppedL(t')) - \phi(X\stoppedL(t))}^2} \lesssim_{\phi,\Lambda,\epsilon} t'-t,
	\end{equation}
	owing to the Cauchy-Schwarz inequality, \cref{hyp:m_L2_all_starting_point,hyp:f0}. Young's inequality with a parameter $\eta > 0$ yields
	\begin{multline*}
		\esp{\abs{R_{t,t'}}} \lesssim_{\phi,\Lambda,\epsilon} (1+ \frac 1 \eta) \esp{\abs{\int_{t \wedge \tauLe}^{t' \wedge \tauLe} \mathcal L^\epsilon \phi (X^\epsilon(s))ds}^2}\\
		+ \eta \esp{\abs{\phi(X\stoppedL(t')) - \phi(X\stoppedL(t))}^2}.
	\end{multline*}
	Similarly, we get
	\begin{equation*}
		\esp{\abs{\int_{t \wedge \tauLe}^{t' \wedge \tauLe} \mathcal L^\epsilon \phi (X^\epsilon(s))ds}^2} \lesssim_{\phi,\Lambda,\epsilon} (t'-t)^2.
	\end{equation*}
	Choosing $\eta = (t'-t)^{1/2}$ yields $\esp{\abs{R_{t,t'}}} \lesssim_{\phi,\epsilon} (t'-t)^{3/2}$, which gives \cref{eq2:proof:prop:good_test_function_martingale}.

	\paragraph{Step 3:} We claim that $\esp{\abs{M\stoppedL_\phi(t)}^2} = \esp{N\stoppedL(t)}$.

	Taking conditional expectation in \cref{eq:def_R_step2} leads to
	\begin{multline*}
		\sum_i \esp{\abs{M\stoppedL_\phi(t \wedge t_{i+1}) - M\stoppedL_\phi(t \wedge t_i)}^2 \mid \mathcal F^\epsilon_{t_i}} =\\
		\sum_i \esp{R_{t \wedge t_i, t \wedge t_{i+1}} \mid \mathcal F^\epsilon_{t_i}}\\
		+ \sum_i \esp{\abs{\phi(X\stoppedL(t \wedge t_{i+1})) - \phi(X\stoppedL(t \wedge t_i))}^2 \mid \mathcal F^\epsilon_{t_i}}.
	\end{multline*}
	Using \cref{eq3:proof:prop:good_test_function_martingale} and the martingale property on $M\stoppedL_{\phi^2}$ and $M\stoppedL_{\phi}$, the last term can be rewritten as
	\begin{equation*}
		\sum_i \int_{t \wedge t_i \wedge \tauLe}^{t \wedge t_{i+1} \wedge \tauLe} \mathcal L^\epsilon (\phi^2)(X^\epsilon(s))ds - 2 \sum_i \phi(X\stoppedL(t \wedge t_i)) \int_{t \wedge t_i \wedge \tauLe}^{t \wedge t_{i+1} \wedge \tauLe} \mathcal L^\epsilon \phi(X^\epsilon(s))ds
	\end{equation*}
	Then, \cref{eq1:proof:prop:good_test_function_martingale} yields
	\begin{equation*}
		\sum_i \esp{\abs{M\stoppedL_\phi(t \wedge t_{i+1}) - M\stoppedL_\phi(t \wedge t_i)}^2 \mid \mathcal F^\epsilon_{t_i}} = N\stoppedL(t) + r_1 + r_2
	\end{equation*}
	where
	\begin{gather*}
		r_1 = \sum_i \esp{R_{t \wedge t_i,t \wedge t_{i+1}} \mid \mathcal F^\epsilon_{t_i}}\\
		r_2 = 2 \sum_i \esp{\int_{t \wedge t_i \wedge \tauLe}^{t \wedge t_{i+1} \wedge \tauLe} \parths{\phi(X\stoppedL(s)) - \phi(X\stoppedL(t \wedge t_i))} \mathcal L^\epsilon \phi (X^\epsilon(s))ds \mid \mathcal F^\epsilon_{t_i}}.
	\end{gather*}
	By means of \cref{eq2:proof:prop:good_test_function_martingale}, $r_1 \to 0$ in $\mathbb L^1$. For $r_2$, we have
	\begin{align*}
		\esp{\abs{r_2}}
			&\leq 2 \esp{\sum_i \int_{t \wedge t_i \wedge \tauLe}^{t \wedge t_{i+1} \wedge \tauLe} \abs{\phi(X\stoppedL(s)) - \phi(X\stoppedL(t_i))} \abs{\mathcal L^\epsilon \phi (X^\epsilon(s))}ds}\\
			&\leq 2 \esp{\sum_i \int_{t_i}^{t_{i+1}} \abs{\phi(X\stoppedL(s)) - \phi(X\stoppedL(t_i))} \abs{\mathcal L^\epsilon \phi (X\stoppedL(s))}ds}\\
			&\leq 2 \sum_i \int_{t_i}^{t_{i+1}} \esp{\abs{\phi(X\stoppedL(s)) - \phi(X\stoppedL(t_i))} \abs{\mathcal L^\epsilon \phi (X\stoppedL(s))}}ds\\
			&\leq 2 \sum_i \int_{t_i}^{t_{i+1}} \esp{\abs{\phi(X\stoppedL(s)) - \phi(X\stoppedL(t_i))}^2}^{1/2} \esp{\abs{\mathcal L^\epsilon \phi (X\stoppedL(s))}^2}^{1/2}ds.
	\end{align*}
	As above, one can show $\esp{\sup_s \abs{\mathcal L^\epsilon \phi (X\stoppedL(s))}^2} \lesssim_{\phi,\Lambda,\epsilon} 1$. Thus, \cref{eq4:proof:prop:good_test_function_martingale} yields
	\begin{align*}
		\esp{\abs{r_2}}
			&\lesssim_{\phi,\Lambda,\epsilon} \sum_i \int_{t_i}^{t_{i+1}} \esp{\abs{\phi(X\stoppedL(s)) - \phi(X\stoppedL(t_i))}^2}^{1/2}ds\\
			&\lesssim_{\phi,\Lambda,\epsilon} \sum_i \int_{t_i}^{t_{i+1}} \parths{s - t_i}^{1/2}ds\\
			&\lesssim_{\phi,\Lambda,\epsilon} \sum_i \parths{t_{i+1} - t_i}^{3/2}ds\\
			&\lesssim_{\phi,\Lambda,\epsilon} T \delta^{1/2} \xrightarrow[\delta \to 0]{} 0.
	\end{align*}

	Thus, in $\mathbb L^1$, we have
	\begin{equation} \label{eq:temptemptemp}
		\lim_{\delta \to 0} \sum_i \esp{\abs{M\stoppedL_\phi(t \wedge t_{i+1}) - M\stoppedL_\phi(t \wedge t_i)}^2 \mid \mathcal F^\epsilon_{t_i}} = N\stoppedL(t).
	\end{equation}
	In particular, the expectation converges. Then, the martingale property and the tower property $\esp{\esp{\cdot \mid \mathcal F_s}} = \esp{\cdot}$ yield
	\begin{align*}
		\esp{N\stoppedL(t)}
			&= \lim_{\delta \to 0} \esp{\sum_i \esp{\abs{M\stoppedL_\phi(t \wedge t_{i+1}) - M\stoppedL_\phi(t \wedge t_i)}^2 \mid \mathcal F^\epsilon_{t_i}}}\\
			&= \lim_{\delta \to 0} \esp{\sum_i \esp{\abs{M\stoppedL_\phi(t \wedge t_{i+1})}^2 - \abs{M\stoppedL_\phi(t \wedge t_i)}^2 \mid \mathcal F^\epsilon_{t_i}}}\\
			&= \lim_{\delta \to 0} \sum_i \esp{\abs{M\stoppedL_\phi(t \wedge t_{i+1})}^2 - \abs{M\stoppedL_\phi(t \wedge t_i)}^2}\\
			&= \esp{\abs{M\stoppedL_\phi(t)}^2}.
	\end{align*}
	 This conclude the proof that
 	\begin{equation*}
 		\forall t \in \tSet, \esp{\abs{M\stoppedL_\phi(t)}^2} = \esp{\int_0^{t \wedge \tauLe} \parths{\mathcal L^\epsilon (\phi^2) - 2 \phi \mathcal L^\epsilon \phi}(X^\epsilon(s))ds}.
 	\end{equation*}
	 and the proof of \cref{prop:good_test_function_martingale}.
\end{proof}

\begin{remark}
	Note that if $\bar m$ had continuous paths, then $M^\epsilon_\phi$ would be a continuous martingale and \cref{eq:temptemptemp} would mean that $N\stoppedL$ is the quadratic variation of $M\stoppedL_\phi$.
\end{remark}

A similar proof leads to the following Proposition, where we take weaker stopping times but add some conditions on $\phi$. The proof is omitted.
\begin{proposition} \label{prop:good_test_function_martingale_cases}
	Let $\phi$ be a good test function. The conclusion of \cref{prop:good_test_function_martingale} holds in the following cases.
	\begin{itemize}
		\item The function $\phi$ does not depend on $f$ and $\tauLe$ is replaced by $\taue$.
		\item The function $\phi$ is bounded uniformly in $n$ and does not depend on $z$ and $\tauLe$ is replaced by $\tauLze$.
		\item The function $\phi$ is bounded and depends only on $n$ and $\tauLe$ is replaced by $+\infty$.
	\end{itemize}
\end{proposition}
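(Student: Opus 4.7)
The plan is to follow line-by-line the three-step structure of the proof of \cref{prop:good_test_function_martingale}, observing that the composite stopping time $\tauLe = \taue \wedge \tauLze$ enters that proof only through two decoupled roles: $\taue$ provides the polynomial control \cref{eq:taue_bounds_m} on $\norm{\bar m\stopped}_E$, while $\tauLze$ provides, via \cref{prop:L2_bound}, a uniform control on $\norm{f\stoppedLz}_\fSet$ (and on $\norm{\zeta\stoppedLz}_{C^1_x}$). These two controls feed independently into the polynomial estimates \cref{eq:estimates_good_test_function} through $S_1$ and $S_2$ respectively, and are combined via Cauchy--Schwarz in the uniform integrability argument \cref{eq:subpol_condition,eq:subpol_conclusion}. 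In each of the three cases listed, the weakening of the hypothesis on $\phi$ kills one of the two polynomial dependences in \cref{eq:estimates_good_test_function}, so that the corresponding stopping time becomes superfluous.

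For the first case ($\phi$ independent of $f$), the term $r_f$ vanishes identically because $D_f\phi\equiv 0$, and the remaining terms $r_z,r_n$ as well as $\mathcal L^\epsilon\phi$ involve only $D_z\phi$, $B\phi$ and $B(\phi^2)$, all of which are bounded by a function of $S_2$ alone. The stopping time $\taue$ together with \cref{eq:taue_bounds_m} and \cref{hyp:m_L2_all_starting_point} suffice to recover the uniform integrability step \cref{eq:subpol_conclusion} with $S_2\leq \epsilon^{-\alpha}\vee\norm{\bar m(0)}_E$. No bound on $f^\epsilon$ enters the argument, so $\tauLze$ can be dropped.

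For the second case ($\phi$ independent of $z$ and bounded uniformly in $n$, meaning all the estimates \cref{eq:estimates_good_test_function} hold with the factor $(1+S_2^{b+2})$ replaced by a constant), the term $r_z$ vanishes because $D_z\phi\equiv 0$, and the polynomial bounds now depend only on $S_1$. The stopping time $\tauLze$ together with \cref{prop:L2_bound} and \cref{hyp:f0} suffice to control $S_1\lesssim_\Lambda \norm{f_0^\epsilon}_\fSet$. Since the estimates no longer involve $\bar m^\epsilon$ polynomially, the stopping time $\taue$ can be removed. For the third case ($\phi$ bounded, depending only on $n$), both $r_f$ and $r_z$ vanish, and $\phi,B\phi,B(\phi^2)$ are uniformly bounded (combining boundedness of $\phi$ with \cref{hyp:B_subpolynomial} applied with $k=0$), so the statement reduces to the classical fact that, for $\phi\in D(B)$ bounded, $t\mapsto\phi(\bar m(t))-\phi(\bar m(0))-\int_0^t B\phi(\bar m(u))du$ is a martingale, a result already invoked in the proof of \cref{prop:good_test_function_martingale} and which requires no stopping at all.

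The main difficulty is not conceptual but is the careful bookkeeping in Steps~1--3: one must re-examine each of the bounds \cref{eq:subpol_condition,eq:subpol_conclusion,eq4:proof:prop:good_test_function_martingale,eq2:proof:prop:good_test_function_martingale} and check that the missing stopping time is systematically matched either by the suppression of the corresponding polynomial factor in \cref{eq:estimates_good_test_function} or by the outright boundedness assumption on $\phi$. Once this matching is verified, Young's inequality and Riemann-sum approximation carry over verbatim, giving both the martingale property of $M^{\epsilon,\tau}_\phi$ and the variance formula. Since the computations are almost identical to those of \cref{prop:good_test_function_martingale}, the proof is omitted as stated in the paper.
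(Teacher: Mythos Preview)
Your proposal is correct and matches the paper's intended approach: the paper itself says ``A similar proof leads to the following Proposition\ldots\ The proof is omitted,'' and your analysis pinpoints exactly why the similar proof works, namely that $\taue$ controls the $S_2$-factor while $\tauLze$ controls the $S_1$-factor in \cref{eq:estimates_good_test_function}, so each additional hypothesis on $\phi$ suppresses one polynomial factor and renders the corresponding stopping time unnecessary. Your case-by-case bookkeeping is precisely the elaboration the paper leaves to the reader.
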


\subsection{The perturbed test functions method} \label{subsec:perturbed_test_functions}

We use the perturbed test functions method as in \cite{papanicolaou1977martingale} to exhibit a generator $\mathcal L$ such that a possible limit point $(\rho_\Lambda,\zeta_\Lambda)$ of $\seq{(\rho\stoppedL,\zeta\stoppedL)}{\epsilon}$ solves the martingale problem associated to $\mathcal L$ until some limit stopping time depending on $\Lambda$. Given a test function $\phi$, two corrector functions $\phi_1$ and $\phi_2$ are constructed, so that
\begin{equation} \label{eq:def_phi_epsilon}
	\forall (f,z,n) \in \fSet \times C^1_x \times E, \phi^\epsilon(f,z,n) = \phi(\rho,z) + \epsilon \phi_1(f,z,n) + \epsilon^2 \phi_2(f,z,n),
\end{equation}
satisfies
\begin{equation} \label{eq:precontrol_perturbed_generator}
	\mathcal L^\epsilon \phi^\epsilon = \mathcal L \phi + o(1),
\end{equation}
when $\epsilon \to 0$. Then, we prove that $\phi^\epsilon$ is a good test function and that we can take the limit when $\epsilon \to 0$ in the martingale problem associated to $\mathcal L^\epsilon$ (\cref{prop:good_test_function_martingale}) to obtain a stopped martingale problem solved by a limit point.

Based on the decomposition \cref{eq:def_L_epsilon}, a sufficient condition to prove \cref{eq:precontrol_perturbed_generator} for $\phi^\epsilon$ of the form \cref{eq:def_phi_epsilon} is to solve the following equations \cref{eq:order-2,eq:order-1,eq:order0} and to check that \cref{eq:order1} holds when $\epsilon \to 0$.
\begin{align}
	\mathcal L_2 \phi &= 0 \label{eq:order-2}\\
	\mathcal L_1 \phi + \mathcal L_2 \phi_1 &= 0 \label{eq:order-1}\\
	\mathcal L_1 \phi_1 + \mathcal L_2 \phi_2 &= \mathcal L\phi \label{eq:order0}\\
	\mathcal L_1 \phi_2 &= O(1). \label{eq:order1}
\end{align}
The properties of the resolvent operators $R_\lambda$ are employed to invert $\mathcal L_2$.

\subsubsection{Framework for the perturbed test functions method}

For a martingale problem to be relevant, it is sufficient that the class of test functions satisfying the martingale problem is separating, namely that if some random variables $X$ and $X'$ satisfy $\esp{\phi(X)} = \esp{\phi(X')}$ for all $\phi \in \Phi$, then we have $X \stackrel{d}{=} X'$. In this work, we use the following class
\begin{equation*} \label{eq:def_regular_test_functions}
	\Theta = \set{(\rho,z) \mapsto \psi\parths{\scalrho{\rho}{\xi}} \chi(z) \mid \psi \in C^3(\mathbb R), \psi'' \in C^1_b(\mathbb R), \xi \in H^3_x, \chi \in C^3_b(C^1_x)},
\end{equation*}
where $\rho = \int_V f d\mu$. The class $\Theta$ is indeed separating because it separates points (see \cite{ethier1986markov}, Theorem 4.5).

Note that the test functions depend only on $\rho$ and $z$, because we expect the limit equation to be satisfied by $\rho$ and $z$. It is confirmed by \cref{subsubsec:order-2}. To simplify the notation, for $\phi \in \Theta$, we sometimes write $\phi(f,z,n) \doteq \phi(\rho,z)$ and $\phi(\rho,z) = \Psi(\rho) \chi(z)$, where $\Psi(\rho) = \psi\parths{\scalrho{\rho}{\xi}}$.

\begin{proposition} \label{prop:corrector}
	There exists an operator $\mathcal L$ whose domain contains $\Theta$ and, for all $\phi \in \Theta$, there exist two good test functions $\phi_1$ and $\phi_2$ such that, for all $(f,z,n) \in \fSet \times C^1_x \times E$, we have
	\begin{gather}
		\label{eq:control_phi_1}
		\abs{\phi_1(f,z,n)} \lesssim_\phi (1 + \norm{f}_\fSet^2) (1 + \norm{n}_E)\\
		\label{eq:control_phi_2}
		\abs{\phi_2 (f,z,n)} \lesssim_\phi (1 + \norm{f}_\fSet^2) (1 + \norm{n}_E^{b+1})\\
		\label{eq:control_perturbed_generator}
		\abs{\mathcal L^\epsilon \phi^\epsilon - \mathcal L \phi}(f,z,n) \lesssim \epsilon (1 + \norm{f}_\fSet^3) (1 + \norm{n}_E^{b+2}).
	\end{gather}
	Moreover, $\phi^\epsilon = \phi + \epsilon \phi_1 + \epsilon^2 \phi_2$ is a good test function.

	Moreover, if $\phi$ depends only on $z$, then $\phi_1$, $\phi_2$ and $\phi^\epsilon$ depend only on $z$ and $n$.
\end{proposition}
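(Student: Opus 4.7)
The plan is to implement the perturbed test function recipe \cref{eq:order-2,eq:order-1,eq:order0,eq:order1} step by step, inverting $\mathcal L_2$ at each stage via the resolvent $R_0$ and invoking \cref{lem:Pt_lip_Pt_quad} for the growth control. Write $\phi(f,z,n) = \psi(\scalrho{\rho}{\xi})\chi(z)$ with $\rho = \int_V f\,d\mu$. \Cref{eq:order-2} is immediate: $\phi$ is independent of $n$, so $B\phi = 0$, and $\int_V Lf\,d\mu = 0$ forces $D_f\phi(Lf) = 0$.

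For \cref{eq:order-1}, using $f = \rho\mathcal M - Lf$, the centering $\int a\mathcal M\,d\mu = 0$ from \cref{hyp:a1}, and one integration by parts in $x$, one computes
\begin{equation*}
\mathcal L_1\phi = -\psi'(\scalrho{\rho}{\xi})\chi(z)\int_\xSet\!\int_V Lf \cdot a \cdot \nabla_x\xi\,d\mu\,dx + \psi'(\scalrho{\rho}{\xi})\chi(z)\psi_{\rho,\xi}(n) + \psi(\scalrho{\rho}{\xi})D\chi(z)(n).
\end{equation*}
I invert $\mathcal L_2$ on each summand separately. The first has no $n$-dependence, so I set $\phi_1^{(1)}(f,z) = \psi'(\scalrho{\rho}{\xi})\chi(z)\int\!\int f\,a\cdot\nabla_x\xi\,d\mu\,dx$; then $D_f\phi_1^{(1)}(Lf)$ reproduces it, the product-rule contribution involving $\psi''$ vanishing because $\int Lf\,d\mu=0$. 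The other two summands are pseudo-linear in $n$ (both $\psi_{\rho,\xi}$ and the restriction of $D\chi(z)$ to $E$ lie in $E^*$), so by \cref{lem:Pt_lip_Pt_quad} the choices
\begin{equation*}
\phi_1^{(2)} = \psi'(\scalrho{\rho}{\xi})\chi(z)R_0\psi_{\rho,\xi}(n), \qquad \phi_1^{(3)} = \psi(\scalrho{\rho}{\xi})R_0[D\chi(z)](n)
\end{equation*}
are well-defined; the identity $BR_0\theta = -\theta$ from \cref{hyp:B_continuity}, combined once more with $\int Lf\,d\mu=0$, yields $\mathcal L_2\phi_1 = -\mathcal L_1\phi$ for $\phi_1 \doteq \phi_1^{(1)}+\phi_1^{(2)}+\phi_1^{(3)}$. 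The bound \cref{eq:control_phi_1} then follows from \cref{lem:Pt_lip_Pt_quad}.

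For \cref{eq:order0}, define the limit generator $\mathcal L\phi \doteq \lrangle{\mathcal L_1\phi_1}$ (the $\nu$-average of $\mathcal L_1\phi_1$ in $n$ at fixed $(f,z)$), which matches the generator already derived in \cite{debussche2011diffusion}. Then $\mathcal L_1\phi_1 - \mathcal L\phi$ is $\nu$-centered in $n$ by construction. When $\mathcal L_1$ acts on $\phi_1^{(2)}$ and $\phi_1^{(3)}$, products of $n$-linear terms appear, such as $\psi_{\rho,\xi}(n)R_0\psi_{\rho,\xi}(n)$ or $D\chi(z)(n)R_0\psi_{\rho,\xi}(n)$, which are pseudo-quadratic in the sense of \cref{def:pseudo-linear_pseudo-quadratic}. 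Inverting $\mathcal L_2$ componentwise via $R_0$ and invoking the pseudo-quadratic part of \cref{lem:Pt_lip_Pt_quad} produces $\phi_2$ with the growth \cref{eq:control_phi_2} (the exponent $b+1$ coming directly from the lemma). By construction, $\mathcal L^\epsilon\phi^\epsilon - \mathcal L\phi = \epsilon\,\mathcal L_1\phi_2$, and one more application of $\mathcal L_1$ to $\phi_2$ gains at most one power of $\norm{f}_\fSet$ (from multiplication by $nf$ or the $Af$ derivative) and one power of $\norm{n}_E$, giving \cref{eq:control_perturbed_generator}. The conditions of \cref{def:good_test_function} for $\phi^\epsilon$ are then checked termwise: smoothness in $(f,z)$ is inherited from the explicit formulas and the linearity of $R_0$ in its argument; \cref{eq:good_test_function_grad_H1} holds since $D_f\phi^\epsilon$ is a pairing against smooth kernels built from $\xi\in H^3_x$ and $a\cdot\nabla_x\xi$; and \cref{eq:estimates_good_test_function} follows from the bounds just obtained together with \cref{hyp:B_subpolynomial} on the resolvent terms. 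If $\phi$ depends only on $z$, then $\psi$ is constant, $\phi_1^{(1)} = \phi_1^{(2)} = 0$, and the construction collapses to $\phi_1 = \phi_1^{(3)}$ together with an analogous $f$-independent $\phi_2$.

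The main obstacle will be the bookkeeping in the $\epsilon^0$ step: one must carefully classify the various components of $\mathcal L_1\phi_1^{(1,2,3)}$ as $n$-constant, pseudo-linear, or pseudo-quadratic, control the $\psi''$ cross-terms produced by the product rule, and verify that the resulting growth exponents match exactly the thresholds $b+1$ and $b+2$ dictated by \cref{def:good_test_function} under \cref{hyp:m_L2_all_starting_point}.
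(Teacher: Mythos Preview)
Your overall strategy matches the paper's: solve the Poisson hierarchy \cref{eq:order-2,eq:order-1,eq:order0} by inverting $\mathcal L_2$, define $\mathcal L\phi$ by the centering condition, and control growth via \cref{lem:Pt_lip_Pt_quad}. Your construction of $\phi_1$ agrees with the paper's formula \cref{eq:def_phi_1} (your three pieces $\phi_1^{(1,2,3)}$ are exactly the three summands there).

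There is, however, a genuine gap in your treatment of $\phi_2$. Two related points:

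\emph{First}, your definition $\mathcal L\phi \doteq \lrangle{\mathcal L_1\phi_1}$ as the $\nu$-average in $n$ at fixed $(f,z)$ is not the right centering. The kernel of $\mathcal L_2$ consists of functions of $(\rho,z)$, so the Fredholm condition for \cref{eq:order0} is that $\mathcal L_1\phi_1 - \mathcal L\phi$ integrate to zero against the invariant measure $\delta_{\rho\mathcal M}\otimes\delta_z\otimes\nu$ of the \emph{full} $\mathcal L_2$-semigroup, not just against $\nu$. Concretely, $\mathcal L_1\phi_1$ contains the $n$-independent piece $c(f,z) = D^2\Psi(\rho)(\bar{Af},\bar{Af})\chi(z) + D\Psi(\rho)(\bar{A^2 f})\chi(z)$, which depends on $f$ through $\bar{Af}$; your $\nu$-average leaves this untouched, so your $\mathcal L\phi$ would depend on $f$, not just $(\rho,z)$, and would not match the generator of \cref{eq:limit_equation}. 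The paper sets $\mathcal L\phi = \lrangle{\mathcal L_1\phi_1}_{\rho,z}$, i.e.\ it also evaluates at $f = \rho\mathcal M$.

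\emph{Second}, and consequently, ``inverting $\mathcal L_2$ componentwise via $R_0$'' does not work for all of $\mathcal L_1\phi_1 - \mathcal L\phi$. The resolvent $R_0$ inverts $B$, not $\mathcal L_2 = D_f(\cdot)(Lf) + B$. The paper decomposes $\mathcal L_1\phi_1 = c + \ell + q$ (constant, pseudo-linear, pseudo-quadratic in $n$) and treats each differently: the $n$-independent part $c - \lrangle{c}_{\rho,z}$ is inverted using the deterministic flow $g(t,f) = \rho\mathcal M + e^{-t}(f-\rho\mathcal M)$, yielding the explicit $\phi_2^c$ in \cref{eq:phi_2_c}; the pseudo-linear part $\ell$ satisfies $\ell(g(t,f),z,n) = e^{-t}\ell(f,z,n)$, so the full $\mathcal L_2$-resolvent reduces to $R_1$ (not $R_0$), giving $\phi_2^\ell = R_1\ell$; only the pseudo-quadratic $q$, which depends on $f$ solely through $\rho$, is inverted by $R_0$ as you wrote. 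This three-way split is precisely the ``bookkeeping'' you flag at the end, but your sketch points to the wrong inverse for two of the three pieces.
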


\subsubsection{Consistency result} \label{subsubsec:order-2}

Since we already expect the limit equation to be satisfied by $\rho$, equation \cref{eq:order-2} will not give us extra information. Hence, this section only present a consistency result, namely that \cref{eq:order-2} forces $\phi$ to depend on $f$ through $\rho$.

In fact, let $\phi$ depend on $f$ and $z$ but not on $n$.
Since $\phi$ does not depend on $n$, $B\phi = 0$. Hence, \cref{eq:order-2} can be written, for all $f \in \fSet$ and $z \in C^1_x$,
\begin{equation} \label{eq:reexpression_of_order-2}
	D_f \phi(f,z,n)(Lf) = 0.
\end{equation}
For $t \in \tSet$ and $f \in \fSet$, define
\begin{equation} \label{eq:def_g}
	g(t,f) = \rho \mathcal M + e^{-t}(f-\rho \mathcal M),
\end{equation}
and observe that $\partial_t g(t,f) = Lg(t,f)$ with $g(0,f) = f$. Owing to \cref{eq:reexpression_of_order-2}, the mapping $t \mapsto \phi(g(t,f),z)$ is constant. Since $g(t,f) \xrightarrow[t \to \infty]{} \rho \mathcal M$, by continuity of $\phi$, we get $\phi(f,z,n) = \phi(\rho \mathcal M,z,n)$, which depends on $f$ only through $\rho$.

\subsubsection{Construction of the first corrector function $\phi_1$} \label{subsubsec:order-1}

The first corrector function $\phi_1$ is defined as the solution of \cref{eq:order-1}: the formal solution to Poisson equation will provide an expression for $\phi_1$, then we will check that this expression indeed solves \cref{eq:order-1}.

Let $g(t,f)$ be defined by \cref{eq:def_g} and $m(t,n)$ be defined in \cref{subsec:driving_random_term} (Markov process of infinitesimal generator $B$ starting from $n$). The process $\seq{(g(t,f),z,m(t,n))}{t \in \tSet}$ is a $\fSet \times C^1_x \times E$-valued Markov process of generator $\mathcal L_2$ starting from $(f,z,n)$. Denote by $\seq{Q_t}{t \in \tSet}$ its transition semi-group. Note that this semi-group does not have a unique invariant distribution, since for any $\rho$ fixed, $\delta_{\rho \mathcal M} \otimes \delta_z \otimes \nu$ is an invariant distribution. However on every space $\set{(f',z',n) \in \fSet \times C^1_x \times E \mid \int_\vSet f' d\mu = \rho, z'=z}$, this measure is the unique invariant distribution. Indeed, $\delta_{\rho \mathcal M}$, $\delta_z$ and $\nu$ are respectively the unique invariant distributions of each marginal process (on the corresponding subspaces), and $\delta_{\rho \mathcal M} \otimes \delta_z \otimes \nu$ is the only coupling of these three marginal distributions.

For $\Phi : \fSet \times C^1_x \times E \to \mathbb R$, denote by
\begin{equation*}
	\lrangle \Phi _{\rho,z} \doteq \int_E \Phi(\rho \mathcal M,z,n)d\nu(n) = \int_{\fSet \times C^1_x \times E} \Phi d(\delta_{\rho \mathcal M} \otimes \delta_z \otimes \nu)
\end{equation*}
the integral against this invariant distribution. For $\phi \in \Theta$, $\phi(\rho,z) = \Psi(\rho) \chi(z)$, let us compute $\mathcal L_1 \phi$.

We have
\begin{align*}
	\mathcal L_1\phi(f,z,n)
		&= D_f\phi(f,z,n)(- Af + nf) + D_z\phi(f,z,n)(n)\\
		&= D\Psi(\rho)(-\bar{Af} + n \rho) \chi(z) + \Psi(\rho) D\chi(z)(n),
\end{align*}
where $\bar h = \int_\xSet h(x)dx$.

Owing to \cref{eq:aM_centered}, one can write, for all $\rho \in {L^2_x}$, $\bar{A(\rho \mathcal M)} = 0$. Moreover, since $\nu$ is centered by \cref{hyp:m_centered}, any term linear in $n$ vanishes when integrating with respect to $\nu$. Hence, we have checked that
\begin{equation*}
	\forall \rho \in {L^2_x}, \forall z \in C^1_x, \lrangle{\mathcal L_1\phi}_{\rho,z} = 0.
\end{equation*}

Using the expansion of $\mathcal L_1 \phi$, for all $f, z, n$, we have
\begin{align*}
	\int_0^\infty Q_t \mathcal L_1 \phi(f,z,n) dt
		&= \int_0^\infty \esp{\mathcal L_1 \phi \parths{g(t,f),z,m(t,n)}} dt\\
	\begin{split}
		&= \int_0^\infty \parths{- D\Psi(\rho)(\bar{Ag(t,f)}) \chi(z) \right.\\
		&\quad + \esp{D\Psi(\rho)(\rho m(t,n))} \chi(z)\\
		&\quad \left. + \Psi(\rho) D\chi(z)(m(t,n)) \vphantom{- D\Psi(\rho)(\bar{Ag(t,f)}) \chi(z)}} dt,
	\end{split}
\end{align*}
owing to the identity $\bar{g(t,f)} = \rho$. Equation \cref{eq:def_g} yields
\begin{equation*}
	\bar{Ag(t,f)} = e^{-t} \bar{Af},
\end{equation*}
since $\bar{A\rho \mathcal M} = 0$. Thus, owing to \cref{def:resolvent}, we define
\begin{align}
	\phi_1(f,z,n)
		&= \int_0^\infty Q_t \mathcal L_1 \phi(f,z,n) dt \nonumber\\
		&= D\Psi(\rho)(- \bar{Af} + R_0(n)\rho) \chi(z) + \Psi(\rho) R_0\lrbracket{D\chi(z)}(n). \label{eq:def_phi_1}
\end{align}
It is straightforward to check that $\phi_1$ defined by \cref{eq:def_phi_1} solves \cref{eq:order-1}. Moreover, it satisfies the condition \cref{eq:control_phi_1}. It remains to prove that $\phi_1$ is a good test function. Owing to \cref{hyp:B_continuity} and \cref{eq:def_phi_1}, $\phi_1 \in D(B)$ and $\phi_1^2 \in D(B)$. For $h \in \fSet$, we have
\begin{multline*}
	D_f \phi_1(f,z,n)(h) = D^2\Psi(\rho)(- \bar{Af} + R_0(n)\rho,\bar h) \chi(z) + D\Psi(\rho)(- \bar{Ah} + R_0(n)\bar h) \chi(z)\\
	+ D\Psi(\rho)(\bar h) R_0\lrbracket{D\chi(z)}(n),
\end{multline*}
hence $D_f \phi_1(f,z,n)(Ah)$ is well-defined (as in \cref{def:good_test_function}) and $\phi_1$, $D_f \phi_1(f,z,n)(h)$ and $D_f \phi_1(f,z,n)(Ah)$ have at most polynomial growth in the sense of \cref{eq:estimates_good_test_function}. For $n_2 \in E$, we have
\begin{multline*}
	D_z \phi_1(f,z,n)(n_2) = D\Psi(\rho)(- \bar{Af} + R_0(n)\rho) D\chi(z)(n_2)\\
	+ \Psi(\rho) D \lrbracket{z' \mapsto R_0\lrbracket{D\chi(z')}(n)}(z)(n_2).
\end{multline*}
Using \cref{lem:Pt_lip_Pt_quad} and the assumption $\chi \in C^3_b(C^1_x)$, we write
\begin{align*}
	D \lrbracket{R_0\lrbracket{D\chi(\cdot)}(n)}(z)(n_2)
		&= D \lrbracket{z' \mapsto \int_0^\infty P_t D\chi(z')(n) dt}(z)(n_2)\\
		&= \int_0^\infty P_t \lrbracket{D^2 \chi(z)(\cdot,n_2)}(n)\\
		&= R_0\lrbracket{D^2\chi(z)(\cdot,n_2)}(n).
\end{align*}
This leads to
\begin{equation*}
	D_z \phi_1(f,z,n)(n_2) = D\Psi(\rho)(- \bar{Af} + R_0(n)\rho) D\chi(z)(n_2) + \Psi(\rho) R_0\lrbracket{D^2\chi(z)(\cdot,n_2)}(n).
\end{equation*}
Once again using \cref{lem:Pt_lip_Pt_quad} and that $\chi \in C^3_b(C^1_x)$, one checks that $D_z \phi_1$ has at most polynomial growth in the sense of \cref{eq:estimates_good_test_function}. Thus $\phi_1$ satisfies \cref{eq:estimates_good_test_function}.

\subsubsection{Construction of the second corrector function $\phi_2$}

The second corrector $\phi_2$ is defined as a solution of \cref{eq:order0}. To solve \cref{eq:order0}, we need the centering condition $\lrangle{\mathcal L\phi - \mathcal L_1 \phi_1}_{\rho,z} = 0$. This identity will be the definition of $\mathcal L \phi$.

First, let us compute $\mathcal L_1 \phi_1$. Using the derivative calculated in \cref{eq:def_phi_1}, $\mathcal L_1 \phi_1$ can be written as
\begin{equation*}
	\mathcal L_1 \phi_1(f,z,n) = c(f,z) + \ell(f,z,n) + q(f,z,n)
\end{equation*}
where $c$, $\ell$ and $q$ are defined by
\begin{gather}
	\begin{split} \label{eq:def_c}
	c(f,z)
		&= D^2\Psi(\rho)(\bar{Af},\bar{Af}) \chi(z)
		+ D\Psi(\rho)(\bar{A^2f}) \chi(z)
	\end{split}\\
	\begin{split} \label{eq:def_l}
	\ell(f,z,n)
		&= - D^2\Psi(\rho)(\bar{Af},R_0(n)\rho + n \rho) \chi(z)\\
		&\quad - D\Psi(\rho)(\bar{A(nf)} + R_0(n)(\bar{Af})) \chi(z)\\
		&\quad - D\Psi(\rho)(\bar{Af}) \parths{R_0\lrbracket{D\chi(z)}(n) + D\chi(z)(n)}
	\end{split}\\
	\begin{split} \label{eq:def_q}
	q(f,z,n)
		&= D^2\Psi(\rho)(n\rho, R_0(n)\rho) \chi(z)
		+ D\Psi(\rho)(R_0(n)(n\rho)) \chi(z)\\
		&\quad + D\Psi(\rho)(n\rho) R_0\lrbracket{D\chi(z)}(n)
		+ D\Psi(\rho)(R_0(n)\rho) D\chi(z)(n)\\
		&\quad + \Psi(\rho) R_0\lrbracket{D^2\chi(z)(\cdot,n)}(n).
	\end{split}
\end{gather}
Note that, for fixed $f$ and $z$, $c$ does not depend on $n$, $\ell$ is pseudo-linear in $n$ and $q$ is pseudo-quadratic in $n$ as introduced in \cref{def:pseudo-linear_pseudo-quadratic}.

The function $\ell(f,z,\cdot)$ is indeed pseudo-linear as a sum of continuous linear and pseudo-linear forms, yielding $\lrangle{\ell}_{\rho,z} = 0$ for all $\rho$ and $z$. Using also that $\bar{A\rho \mathcal M} = 0$, we get an explicit definition of $\mathcal L$:
\begin{equation} \label{eq:def_L_phi}
	\begin{split}
	\mathcal L \phi(\rho,z) \doteq \lrangle{\mathcal L_1 \phi_1}_{\rho,z}
		&= D\Psi(\rho)(\bar{A^2\rho \mathcal M}) \chi(z)\\
		&\quad + \int D^2\Psi(\rho)(n\rho,R_0(n)\rho) d\nu(n) \chi(z)\\
		&\quad + \int D\Psi(\rho)(R_0(n)(n\rho)) d\nu(n) \chi(z)\\
		&\quad + \int D\Psi(\rho)(n\rho) R_0\lrbracket{D\chi(z)}(n) d\nu(n)\\
		&\quad + \int D\Psi(\rho)(R_0(n)\rho) D\chi(z)(n) d\nu(n)\\
		&\quad + \Psi(\rho) \int R_0\lrbracket{D^2\chi(z)(\cdot,n)}(n) d\nu(n).
	\end{split}
\end{equation}
Note that by taking $\chi = 1$, we obtain the same expression of $\mathcal L$ as in \cite{debussche2011diffusion}.

Since the centering condition for the Poisson equation \cref{eq:order0} is satisfied by construction of $\mathcal L$, the second corrector function $\phi_2$ can be defined as follows: for all $f,z,n$,
\begin{align*}
	\phi_2 (f,z,n)
		&\doteq \int_0^\infty Q_t \parths{\mathcal L_1\phi_1 - \lrangle{\mathcal L_1 \phi_1}_{\rho,z}} (f,z,n) dt\\
	\begin{split}
		&= \int_0^\infty Q_t \parths{c - \lrangle c_{\rho,z}} (f,z,n) dt\\
		&\quad + \int_0^\infty Q_t \ell(f,z,n) dt\\
		&\quad + \int_0^\infty Q_t \parths{q - \lrangle q_{\rho,z}} (f,z,n) dt
	\end{split}\\
		&\doteq \phi_2^c(f,z,n) + \phi_2^\ell(f,z,n) + \phi_2^q(f,z,n).
\end{align*}
Once again, one can check that $\phi_2$ satisfies \cref{eq:order0}. It only remains to prove \cref{eq:control_phi_2}, \cref{eq:control_perturbed_generator} and that $\phi^\epsilon$ is a good test function. Since
\begin{equation} \label{eq:diff_Lphie_Lphi}
	\mathcal L^\epsilon \phi^\epsilon = \mathcal L \phi + \epsilon \mathcal L_1 \phi_2,
\end{equation}
equation \cref{eq:control_perturbed_generator} comes from an estimate on $\mathcal L_1 \phi_2(f,n)$ in terms of $f$, $n$, and $\phi$.

\subsubsection{Controls on the second corrector function} \label{subsubsec:control_phi_2}

The aim of this section is to prove some estimates for $\phi_2(f,z,n)$ and its derivatives to establish that \cref{eq:estimates_good_test_function,eq:control_perturbed_generator,eq:control_phi_2} are satisfied. Let $f, h \in \fSet$, $z \in C^1_x$ and $n, n_2 \in E$ and let $S_1 = \norm{f}_\fSet \vee \norm{h}_\fSet$ and $S_2 = \norm{n}_E \vee \norm{n_2}_E$.

\paragraph{Estimates on $\phi_2^c$}

We have, using $\lrangle{c}_{\rho,z} = c(\rho \mathcal M, z)$,
\begin{equation*}
	c(f,z) - c(\rho \mathcal M,z) = D^2\Psi(\rho)(\bar{Af},\bar{Af}) \chi(z)
	+ D\Psi(\rho)(\bar{A^2(f-\rho \mathcal M)}) \chi(z).
\end{equation*}
Recall that $\bar{Ag(t,f)} = e^{-t} \bar{Af}$. Hence, using \cref{eq:def_g}, we get
\begin{align*}
	Q_t \parths{c - \lrangle c_{\rho,z}} (f,z,n)
		&= \esp{c(g(t,f),z) - c(\rho \mathcal M,z)}\\
	\begin{split}
		&= e^{-2t} D^2\Psi(\rho)(\bar{Af},\bar{Af}) \chi(z)\\
		&\quad + e^{-t} D\Psi(\rho)(\bar{A^2(f-\rho \mathcal M)}) \chi(z).
	\end{split}
\end{align*}
By integration, we get
\begin{equation} \label{eq:phi_2_c}
	\phi_2^c (f,z,n) = \frac 1 2 D^2\Psi(\rho)(\bar{Af},\bar{Af}) \chi(z) + D\Psi(\rho)(\bar{A^2(f-\rho \mathcal M)}) \chi(z).
\end{equation}
Moreover, we obtain
\begin{gather*}
	\begin{split}
	D_f \phi_2^c (f,z,n)(h)
		&= \frac 1 2 D^3\Psi(\rho)(\bar{Af},\bar{Af},\bar h) \chi(z)\\
		&\quad + D^2\Psi(\rho)(\bar{Af},\bar{Ah}) \chi(z)\\
		&\quad + D^2\Psi(\rho)(\bar{A^2(f-\rho \mathcal M)},\bar h) \chi(z)\\
		&\quad + D\Psi(\rho)(\bar{A^2(h - \bar h \mathcal M)}) \chi(z),
	\end{split}\\
	\begin{split}
	D_z \phi_2^c (f,z,n)(n_2)
		&= \frac 1 2 D\Psi(\rho)(\bar{Af},\bar{Af}) D\chi(z)(n)\\
		&\quad + D\Psi(\rho)(\bar{A^2(f - \rho \mathcal M)}) D\chi(z)(n).
	\end{split}
\end{gather*}
Recall that $\norm{f - \rho \mathcal M}_\fSet^2 + \norm{\rho}_{L^2_x}^2 = \norm{f}_\fSet^2$, hence $\norm{f - \rho \mathcal M}_\fSet \leq \norm{f}_\fSet$. Then, since $\Psi(\rho) = \psi\parths{\scalrho{\rho}{\xi}}$ and $\psi'' \in C^1_b(\mathbb R)$, we get that $\phi_2^c$ satisfies \cref{eq:estimates_good_test_function}. More precisely, the following estimates hold:
\begin{gather*}
	\abs{\phi_2^c (f,z,n)} \lesssim_\phi 1 + \norm{f}_\fSet^2\\
	\abs{\mathcal L_1 \phi_2^c (f,z,n)} \lesssim_\phi (1 + \norm{f}_\fSet^3) (1+\norm{n}_E).
\end{gather*}

\paragraph{Estimates on $\phi_2^\ell$}

Using \cref{eq:def_g}, \cref{eq:def_l} and that $\bar{A\rho \mathcal M} = \bar{A(n\rho) \mathcal M} = 0$, we get
\begin{equation*}
	\forall (f,z,n), \ell(g(t,f),z,n) = e^{-t} \ell(f,z,n).
\end{equation*}
Thus, we have
\begin{align*}
	Q_t \ell(f,z,n) = \esp{\ell(g(t,f),z,m(t,n))} = e^{-t} \esp{\ell(f,z,m(t,n))} = e^{-t} P_t \ell(f,z,n),
\end{align*}
and by integrating with respect to $t$, we get
\begin{equation*}
	\phi_2^\ell (f,z,n) = R_1 \ell(f,z,n).
\end{equation*}
Moreover, from \cref{lem:Pt_lip_Pt_quad,eq:def_l}, it is straightforward to check that
\begin{equation*}
	\lrbracket{\ell(f,\cdot)}_{\Lip} \lesssim_\phi (1 + \norm{f}_\fSet^2).
\end{equation*}
Hence, \cref{lem:Pt_lip_Pt_quad} yields
\begin{equation*}
	\abs{\phi_2^\ell (f,z,n)} \lesssim_\phi (1 + \norm{f}_\fSet^2) (1 + \norm{n}_E).
\end{equation*}

Since the operator $R_1$ acts only on the variable $n$, it commutes with the derivatives $D_f$ and $D_z$ in the following sense:
\begin{gather*}
	D_f \lrbracket{R_1 \ell}(f,z,n)(h) = R_1 \lrbracket{D_f \ell(f,z,\cdot)(h)}(n)\\
	D_z \lrbracket{R_1 \ell}(f,z,n)(n_2) = R_1 \lrbracket{D_z \ell(f,z,\cdot)(n_2)}(n).
\end{gather*}
Thus, after calculating the derivatives of $\ell$, we get estimates on the derivatives of $\phi_2^\ell$ the same way we got estimates on $\phi_2^\ell$. This leads to
\begin{gather*}
	\abs{D_f \phi_2^\ell (f,z,n)(Ah)} \lesssim_\phi (1 + S_1^3) (1 + S_2)\\
	\abs{D_f \phi_2^\ell (f,z,n)(n_2 f)} \lesssim_\phi (1 + S_1^3) (1 + S_2^2)\\
	\abs{D_z \phi_2^\ell (f,z,n)(n_2)} \lesssim_\phi (1 + S_1^2) (1 + S_2^2),
\end{gather*}
hence $\phi_2^\ell$ satisfies \cref{eq:estimates_good_test_function}. Finally, the following estimates hold
\begin{gather*}
	\abs{\phi_2^\ell (f,z,n)} \lesssim_\phi (1 + \norm{f}_\fSet^2) (1 + \norm{n}_E)\\
	\abs{\mathcal L_1 \phi_2^\ell (f,z,n)} \lesssim_\phi (1 + \norm{f}_\fSet^3) (1+\norm{n}_E^2).
\end{gather*}

\paragraph{Estimates on $\phi_2^q$}

The function $q$ depends of $f$ only through $\rho$. Since $\bar{g(t,f)} = \rho$ does not depend on $t$, we get $Q_t q = P_t q$ and
\begin{equation*}
	\phi_2^q (f,z,n) = R_0 \lrbracket{q - \lrangle{q}_{\rho,z}} (f,z,n)
\end{equation*}

It is straightforward to compute the derivatives of $q$ with respect to $f$ and $z$ from \cref{eq:def_q}. One can deduce estimates for $\lrbracket{q(f,z,\cdot)}_{\psq}$ and for the first order derivatives $\lrbracket{D_f q(f,z,\cdot)(n_2 f)}_{\psq}$, $\lrbracket{D_f q(f,z,\cdot)(Af)}_{\psq}$ and $\lrbracket{D_z q(f,z,\cdot)(n_2)}_{\psq}$. Reasoning as for $R_1$, the resolvent $R_0$ acts only on $n$, and thus commutes with $D_f$ and $D_z$. Thus, \cref{lem:Pt_lip_Pt_quad} with $\lambda = 0$ proves that $\phi_2^q$ satisfies \cref{eq:estimates_good_test_function}. Finally, the following estimates hold
\begin{gather*}
	\abs{\phi_2^q (f,z,n)} \lesssim_\phi (1 + \norm{f}_\fSet^2) (1 + \norm{n}_E^{b+1})\\
	\abs{\mathcal L_1 \phi_2^q (f,z,n)} \lesssim_\phi (1 + \norm{f}_\fSet^3) (1 + \norm{n}_E^{b+2}).
\end{gather*}

This concludes the proof that $\phi_2$ satisfies \cref{eq:estimates_good_test_function} and the proof of the estimates of \cref{prop:corrector} on $\phi_2$ and $\mathcal L_1 \phi_2$.

\subsubsection{Good test function property}

It only remains to prove that $\phi^\epsilon$ is a good test function. The estimates \cref{eq:estimates_good_test_function} are satisfied by $\epsilon \phi_1$ and $\epsilon^2 \phi_2$, hence by their sum $\phi^\epsilon$. Moreover, using the notation introduced in \cref{subsubsec:resolvent}, $\phi^\epsilon$ can be written as
\begin{equation*}
	\begin{split}
	\phi^\epsilon(f,z,n)
		&= \phi(\rho,z) - \epsilon D\Psi(\rho)(\bar{Af}) \chi(z) + \epsilon R_0\lrbracket{D\Psi(\rho)(\cdot \rho)}(n) \chi(z)\\
		&\quad + \epsilon \Psi(\rho) R_0 \lrbracket{D\chi(z)}(n) + \epsilon^2 \phi_2^c(f,z) + \epsilon^2 R_1\ell(f,z,n)\\
		&\quad + \epsilon^2 R_0 \lrbracket{q - \lrangle{q}_{\rho,z}}(f,z,n).
	\end{split}
\end{equation*}
Observe that each term either does not depend on $n$ or can be written $R_\lambda \theta$ with $\theta$ as in \cref{def:resolvent}. As a consequence, owing to \cref{hyp:B_continuity}, any product of at most two of these terms belongs to $D(B)$. Thus, $\phi^\epsilon \in D(B)$ and $(\phi^\epsilon)^2 \in D(B)$. This concludes the proof that $\phi^\epsilon$ is a good test function, and the proof of \cref{prop:corrector}.

\section{Dynamics associated with the limiting equation} \label{sec:limit_equation}

In this section, we show that the operator $\mathcal L$ is the generator of the limit equation \cref{eq:limit_equation} and that the martingale problem associated to $\mathcal L$ characterizes the solution of \cref{eq:limit_equation}.

\begin{definition} \label{def:limit_equation}
	Let $\rho_0 \in L^2_x$ and let $\sigma > 0$. A process $(\rho,\zeta)$ is said to be a weak solution to \cref{eq:limit_equation} in $L^2_x$ if the following assertions are satisfied
	\begin{enumerate}[label=(\roman*)]
		\item $\rho(0) = \rho_0$,
		\item $\rho \in L^\infty([0,T],L^2_x) \cap C([0,T],H^{-\sigma}_x)$ a.s. and $\zeta \in C([0,T],C^1_x)$ a.s.,
		\item there exists $\seq{B_i}{i}$ a sequence of independent standard Brownian motions such that $(\rho,\zeta)$ is adapted to the filtration generated by $\seq{B_i}{i}$ and such that, for all $\xi \in L^2_x$ and $t \in [0,T]$, we have a.s.
		\begin{gather}
			\begin{split}
			\scalrho{\rho(t)}{\xi}
				&= \scalrho{\rho_0}{\xi} + \int_0^t \scalrho{\rho(s)}{\div(K \nabla \xi)} ds + \int_0^t \scalrho{\frac 1 2 F \rho(s)}{\xi} ds \label{eq:limit_equation_weak}\\
				&\quad + \sum_i \sqrt{q_i} \int_0^t \scalrho{F_i \rho(s)}{\xi} dB_i(s)
			\end{split}\\
			\zeta(t) = \sum_i \sqrt{q_i} F_i B_i(t). \label{eq:limit_equation_weak_zeta}
		\end{gather}
	\end{enumerate}
\end{definition}

Note that the sum in \cref{eq:limit_equation_weak_zeta} does converge in $C([0,T],C^1_x)$ owing to \cref{lem:qiFi_summable}.

The solution to this equation exists and is unique in distribution. The existence can be proved using energy estimates, Itô formula and regularization argument. The uniqueness comes from pathwise uniqueness which derives from the same arguments. We do not give details concerning existence and uniqueness, however, in the proof of the following Proposition, we established the aforementioned energy estimate \ref{eq:moment_limit_solution}.

\begin{proposition} \label{prop:limit_generator}
	Let $\sigma > 0$ and let $(\rho,\zeta) \in C([0,T],H^{-\sigma}_x) \times C([0,T],C^1_x)$.

	If $(\rho,\zeta)$ is the weak solution to \cref{eq:limit_equation} in $H^{-\sigma}_x$, then, for any test function $\phi \in \Theta$, the process
	\begin{equation*}
		M_\phi(t) = \phi(\rho(t),\zeta(t)) - \phi(\rho_0,0) - \int_0^t \mathcal L \phi(\rho(s),\zeta(s)) ds
	\end{equation*}
	is a martingale for the filtration generated by $(\rho,\zeta)$.

	Conversely, if for all $\phi \in \Theta$, $M_\phi$ and $M_{\phi^2}$ are martingales, then $(\rho,\zeta)$ is the weak solution of \cref{eq:limit_equation} in $H^{-\sigma}_x$.
\end{proposition}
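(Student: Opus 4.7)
The statement has two implications. For the direct one, suppose $(\rho,\zeta)$ solves \cref{eq:limit_equation} in the sense of \cref{def:limit_equation} and apply Itô's formula to $\phi(\rho(t),\zeta(t)) = \Psi(\rho(t))\chi(\zeta(t))$. By \cref{eq:limit_equation_weak,eq:limit_equation_weak_zeta}, the real semimartingale $t \mapsto \scalrho{\rho(t)}{\xi}$ has drift $\scalrho{\rho}{\div(K\nabla\xi)} + \tfrac{1}{2}\scalrho{F\rho}{\xi}$ and quadratic variation $\sum_i q_i\scalrho{F_i\rho}{\xi}^2\,dt$, while $\zeta = \sum_i \sqrt{q_i}\,F_i\,B_i$. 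Itô's product rule then expresses the drift of $\phi(\rho(t),\zeta(t))$ as a sum of five contributions: the $\xi$-drift of $\scalrho{\rho}{\xi}$ times $\psi'\chi$, the $D^2\Psi$-Itô correction, the $\tfrac{1}{2}F\rho$ drift times $\psi'\chi$, a cross-variation term involving $\psi' \otimes D\chi$, and the $D^2\chi$-Itô correction.

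The technical heart is matching these five contributions with the six integrals in \cref{eq:def_L_phi} defining $\mathcal L\phi$. Integration by parts together with the definition \cref{eq:def_K} of $K$ yields $\bar{A^2(\rho\mathcal M)} = \div(K\nabla\rho)$, handling the first-order term. For the noise-driven contributions I would use the spectral identity $k(x,y) = \sum_i q_i F_i(x) F_i(y)$ together with the resolvent expansion \cref{eq:expr_k}, which imply for instance
\begin{equation*}
    \sum_i q_i\,\scalrho{F_i\rho}{\xi}^2 = 2\int_E \psi_{\rho,\xi}(n)\,R_0\psi_{\rho,\xi}(n)\,d\nu(n),
\end{equation*}
matching the $D^2\Psi(\rho)(n\rho,R_0(n)\rho)$-integral in \cref{eq:def_L_phi}. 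Likewise, splitting $F(x) = k(x,x)$ via \cref{eq:expr_k} and exploiting the $\nu$-stationarity of $\bar m$ to reconstruct $\int_{\mathbb R}$ from $\int_0^\infty$ gives $\tfrac{1}{2}\scalrho{F\rho}{\xi} = \int_E R_0\psi_{n\rho,\xi}(n)\,d\nu(n)$, matching the $R_0(n)(n\rho)$-integral; the two cross $D\Psi \otimes D\chi$ integrals combine via the same identity to match the Itô cross-variation, and the $D^2\chi$-integral is handled analogously. Once this bookkeeping is complete, $M_\phi$ is the martingale part of Itô's decomposition, and a routine $L^2_x$-moment bound on $\rho$ (the energy estimate alluded to just before the statement) upgrades it from a local to a true martingale.

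For the converse, suppose $M_\phi$ and $M_{\phi^2}$ are martingales for every $\phi \in \Theta$. Taking $\phi(\rho,z) = \scalrho{\rho}{\xi}$ (i.e.\ $\psi(x) = x$ and $\chi \equiv 1$) and computing $\mathcal L\phi$ from \cref{eq:def_L_phi} directly yields the drift of $\scalrho{\rho(\cdot)}{\xi}$ prescribed by \cref{eq:limit_equation_weak}. The martingale property of $M_{\phi^2}$ yields $\esp{M_\phi(t)^2} = \esp{\int_0^t [\mathcal L(\phi^2) - 2\phi\,\mathcal L\phi](\rho,\zeta)\,ds}$, so the same kernel--resolvent identifications read in reverse pin down the quadratic variation of $\scalrho{\rho(\cdot)}{\xi}$ as $\sum_i q_i\scalrho{F_i\rho}{\xi}^2\,dt$ and, by polarization in $\xi$, the full covariance structure. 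Applying the same procedure to functionals depending only on $\zeta$ identifies the law of $\zeta$. A standard martingale representation argument, built on the orthonormal system $\set{F_i}$ of $Q^{1/2}$, then produces independent standard Brownian motions $(B_i)_i$ such that \cref{eq:limit_equation_weak,eq:limit_equation_weak_zeta} hold. The main obstacle is clearly the six-term bookkeeping in the forward direction, where every integral in \cref{eq:def_L_phi} must be paired with a specific Itô contribution through the two dual representations of $k$; the converse is essentially the same matching read backwards, combined with a now-standard representation step.
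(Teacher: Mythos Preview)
Your proposal is correct and follows essentially the same route as the paper: both identify the six resolvent integrals in \cref{eq:def_L_phi} with the It\^o drift of the SPDE via $\bar{A^2(\rho\mathcal M)}=\div(K\nabla\rho)$, the Mercer expansion $k(x,y)=\sum_i q_i F_i(x)F_i(y)$, and the stationarity-based doubling $\int_0^\infty\mapsto\tfrac12\int_{\mathbb R}$; both rely on an $L^2_x$ energy estimate for integrability, and for the converse both invoke the standard martingale-problem-to-SDE passage via a representation theorem. The only cosmetic differences are that the paper packages the second-order terms into a single $2\times2$ trace formula rather than matching term by term, and for the martingale property it refers back to the discrete-time argument of \cref{prop:good_test_function_martingale} instead of invoking It\^o's formula directly.
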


\begin{proof}
	Let us first prove that $\mathcal L$ is the generator associated to \cref{eq:limit_equation}. The expression of $\mathcal L \phi$ is given by \cref{eq:def_L_phi}. First note that $\bar{A^2\rho \mathcal M} = \div(K \nabla \rho)$, which is the first term of \cref{eq:limit_equation}. The third term of \cref{eq:def_L_phi} is associated to the second term of \cref{eq:limit_equation}:
	\begin{align*}
		\int D\Psi(\rho)(R_0(n)(n\rho)) d\nu(n)
			&= \esp{\int_0^\infty D\Psi(\rho)(\rho \bar m(0) \bar m(t)) dt}\\
			&= \frac 1 2 \esp{\int_{\mathbb R} D\Psi(\rho)(\rho \bar m(0) \bar m(t)) dt}\\
			&= \frac 1 2 D\Psi(\rho)(\rho F).
	\end{align*}
	To rewrite the second term of \cref{eq:def_L_phi}, assume first that the bilinear form $D^2\Psi(\rho)$ on $L^2_x$ admits a kernel $k_\rho$. Then, we have
	\begin{align*}
		\int D^2\Psi(\rho)(n\rho,R_0(n)\rho) d\nu(n)
			&= \frac 1 2 \esp{\int_0^\infty D^2\Psi(\rho)(\rho \bar m(0), \rho \bar m(t)) dt}\\
			&= \frac 1 2 \esp{\int_0^\infty \int\int k_\rho(x,y) \rho(x) \bar m(0)(x) \rho(y) \bar m(t)(y) dxdydt}\\
			&= \frac 1 2 \int\int k_\rho(x,y) k(x,y) \rho(x) \rho(y) dxdy.
	\end{align*}
	Owing to Mercer's Theorem (see \cite{ferreira2009eigenvalues}), the kernel $k$ can be expressed in terms of the eigenvectors and eigenvalues of $Q$:
	\begin{equation*}
		\forall x, y, k(x,y) = \sum_i q_i F_i(x) F_i(y).
	\end{equation*}
	It is straightforward to check that $k^{(1/2)}(x,y) = \sum_i q_i^{1/2} F_i(x) F_i(y), x, y \in \xSet$, defines a kernel for $Q^{1/2}$ and satisfies $k(x,y) = \int k^{(1/2)}(x,z) k^{(1/2)}(y,z) dz$. Thus, we have
	\begin{align}
		\int D^2\Psi(\rho)(n\rho,R_0(n)\rho) d\nu(n)
			&= \frac 1 2 \int\int\int k_\rho(x,y) k^{(1/2)}(x,z) \rho(x) k^{(1/2)}(y,z) \rho(y) dxdydz \nonumber\\
			&= \frac 1 2 \Tr\lrbracket{(\rho Q^{1/2}) D^2 \Psi(\rho) (\rho Q^{1/2})^*}. \label{temp:eq:prop:limit_generator}
	\end{align}
	By density of the functions whose second derivative admits a kernel $k_\rho$ in $C^2$, this formula holds for all test functions $\phi \in \Theta$. Using similar reasoning for the three remaining terms, we get
	\begin{multline*}
		\mathcal L \phi(\rho,\zeta) = D\Psi(\rho)(\div(K \nabla \rho) + \frac 1 2 F \rho) \chi(\zeta)\\
		+ \frac 1 2 \Tr\lrbracket{(\rho Q^{1/2}, Q^{1/2})
		\begin{pmatrix}
			D^2 \Psi(\rho) \chi(\zeta) & D\Psi(\rho) \otimes D\chi(\zeta)\\
			D\Psi(\rho) \otimes D\chi(\zeta) & \Psi(\rho) D^2 \chi(\zeta)
		\end{pmatrix}
		(\rho Q^{1/2}, Q^{1/2})^*},
	\end{multline*}
	which is the generator of \cref{eq:limit_equation}. Once moment estimates for $\rho$ have been obtained in $L^2_x$, integrability of $M_\phi$ is ensured. In addition, estimates on $\phi(\rho(t),\zeta(t))$ and $\mathcal L \phi(\rho(t),\zeta(t))$ (uniformly in $t \in [0,T]$) are also obtained, since $\phi$ and $\mathcal L \phi$ have at most quadratic growth. Then, the proof that $M_\phi$ is a martingale follows the same strategy as for the proof of \cref{prop:good_test_function_martingale}. This proof is omitted. It thus remains to prove the moment estimates for $\rho$.

	We apply Itô's formula, equation \cref{eq:limit_equation_weak} and we take the expectation (so that the martingale part vanishes), to get
	\begin{multline*}
		\frac 1 2 \esp{\scalrho{\rho(t)}{\xi}^2} = \frac 1 2 \esp{\scalrho{\rho_0}{\xi}^2} + \mathbb E{\int_0^t \scalrho{\rho(s)}{\div(K \nabla \xi)} \scalrho{\rho(s)}{\xi} ds}\\
		 + \mathbb E \int_0^t \scalrho{\frac 1 2 F \rho(s)}{\xi} \scalrho{\rho(s)}{\xi} ds + \frac 1 2 \sum_i q_i \mathbb E \int_0^t \scalrho{F_i \rho(s)}{\xi}^2 ds.
	\end{multline*}
	Then, we evaluate at $\xi = e_\ell$ with $\ell \in \mathbb Z^d$ and $e_\ell$ the Fourier basis $e_\ell(x) = \exp(2 i \pi \scald{\ell}{x})$. Let $\lambda_\ell = 4 \pi^2 \scald{\ell}{K\ell}$ so that $\div(K \nabla e_\ell) = - \lambda_\ell e_\ell$. We sum this formula for $\abs{\ell} \leq L$. Let $P_L$ be the orthogonal projector on the space generated by $\set{e_\ell \mid \abs{\ell} \leq L}$. Since $\lambda_\ell \geq 0$, we get
	\begin{align*}
		\begin{split}
		\frac 1 2 \esp{\norm{P_L \rho(t)}_{L^2_x}^2}
			&\leq \frac 1 2 \esp{\norm{P_L \rho_0}_{L^2_x}^2} + \mathbb E \int_0^t \frac 1 2 \norm{P_L (F\rho(s))}_{L^2_x} \norm{P_L \rho(s)}_{L^2_x} ds\\
			&\quad + \frac 1 2 \sum_i \mathbb E \int_0^t q_i \norm{P_L (F_i \rho(s))}_{L^2_x}^2 ds
		\end{split}\\
			&\leq \frac 1 2 \esp{\norm{P_L \rho_0}_{L^2_x}^2} + \frac 1 2 \parths{\norm{F}_{L^\infty} + \sum_i q_i \norm{F_i}_{L^\infty}^2} \mathbb E \int_0^t \norm{\rho(s)}_{L^2_x}^2 ds.
	\end{align*}
	Taking $L \to \infty$, using \cref{lem:qiFi_summable} and Gronwall's Lemma, we get
	\begin{equation} \label{eq:moment_limit_solution}
		\esp{\norm{\rho(t)}_{L^2_x}^2} \lesssim \esp{\norm{\rho_0}_{L^2_x}^2}.
	\end{equation}
	This concludes the proof of the moment estimates for $\rho$, hence the proof that $M_\phi$ is a martingale.

	Conversely, assume that for all $\phi \in \Theta$, $M_\phi$ and $M_{\phi^2}$ are martingales. It holds in particular for regular and bounded test functions $\phi$. It is then standard that a solution to this martingale problem is the Markov process of generator $\mathcal L$ (see for example chapter 4 of \cite{ethier1986markov}), based on Lévy's martingale representation theorem in Hilbert spaces (see \cite{da2014stochastic}, Theorem 8.2). This concludes the proof since we already proved that $\mathcal L$ is the generator associated to \cref{eq:limit_equation}.
\end{proof}

\section{Tightness of the coupled stopped process} \label{sec:tightness}

In this section, we prove the following Proposition.
\begin{proposition} \label{prop:tightness}
	Let $\Lambda \in (0,\infty)$. The family of processes $\seq{(\rho\stoppedLz,\zeta\stoppedLz)}{\epsilon}$ is tight in the space $C([0,T],H^{-\sigma}_x) \times C([0,T],C^1_x)$ for any $\sigma > 0$. Moreover, the family $\seq{\zeta^\epsilon}{\epsilon}$ is tight in $C([0,T],C^1_x)$.
\end{proposition}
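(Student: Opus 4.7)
Joint tightness in the product space $C([0,T],H^{-\sigma}_x) \times C([0,T],C^1_x)$ reduces to tightness of each marginal. Moreover, since $\zeta\stoppedLz(t) = \zeta^\epsilon(t \wedge \tauLze)$ is obtained from $\zeta^\epsilon$ by stopping, which can only reduce the modulus of continuity and the $C^1_x$-norm along the path, tightness of $\zeta\stoppedLz$ in $C([0,T],C^1_x)$ is automatic once we have tightness of the unstopped $\zeta^\epsilon$. Thus the second assertion of the proposition does double duty, and there are really only two independent statements to prove.

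\textbf{Tightness of $\zeta^\epsilon$ in $C([0,T],C^1_x)$.} I would exploit the representation $\zeta^\epsilon(t) = \epsilon \int_0^{t/\epsilon^2} \bar m(u)du$ to derive a Kolmogorov-type increment estimate
\begin{equation*}
	\esp{\norm{\zeta^\epsilon(t)-\zeta^\epsilon(t')}_{H^k_x}^p} \lesssim \abs{t-t'}^{p/2},
\end{equation*}
for $k = \floor{d/2}+2$ and a sufficiently large integer $p$. The bound would be obtained by expanding the squared norm as a double integral in $(u_1,u_2)$, controlling the covariance using \cref{hyp:mixing} and \cref{hyp:m_L2_all_starting_point} to produce the integrable kernel $\gamma_{mix}(\abs{u_1-u_2})$, then using \cref{hyp:m_moments} to obtain higher moments. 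Since $H^k_x$ is compactly embedded in $C^1_x$, the Kolmogorov continuity criterion then delivers tightness in $C([0,T],C^1_x)$.

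\textbf{Tightness of $\rho\stoppedLz$ in $C([0,T],H^{-\sigma}_x)$.} The plan is to combine a uniform moment estimate with an Aldous criterion established via the perturbed test function machinery. The bound $\norm{\rho\stoppedLz(t)}_{L^2_x} \leq \norm{f\stoppedLz(t)}_\fSet \lesssim_\Lambda \norm{f^\epsilon_0}_\fSet$ given by \cref{prop:L2_bound}, together with the compact embedding $L^2_x \hookrightarrow H^{-\sigma}_x$, immediately gives tightness of the one-dimensional marginals. For equicontinuity, apply the construction of \cref{prop:corrector} to test functions of the form $\phi(\rho,z) = \psi(\scalrho{\rho}{\xi})\chi(z) \in \Theta$. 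By \cref{prop:good_test_function_martingale}, one has the decomposition
\begin{equation*}
	\phi^\epsilon(X\stoppedL(t)) - \phi^\epsilon(X\stoppedL(0)) = M_{\phi^\epsilon}^{\epsilon,\tauLe}(t) + \int_0^{t \wedge \tauLe} \mathcal L^\epsilon \phi^\epsilon(X^\epsilon(s))ds.
\end{equation*}
The estimates \cref{eq:control_phi_1,eq:control_phi_2,eq:control_perturbed_generator} combined with \cref{prop:L2_bound,lem:taue_to_infty} then control the $L^2(\Omega)$ increments of the drift term by $O(\abs{t-t'})$, and those of the martingale term by $O(\abs{t-t'}^{1/2})$ via its quadratic variation, which is exactly the Aldous condition. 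Since $\phi^\epsilon - \phi = O(\epsilon)$ in the relevant moments, the criterion transfers to $\phi(\rho\stoppedL,\zeta\stoppedL)$; taking $\chi\equiv 1$ and letting $\xi$ range over a countable dense family of $H^\sigma_x$ then promotes tightness of the scalar projections to tightness of $\rho\stoppedLz$ in $C([0,T],H^{-\sigma}_x)$, using that bounds depend polynomially in $\norm{\xi}_{H^\sigma_x}$.

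\textbf{Main obstacle.} The most delicate point is reconciling the two distinct stopping times: $\rho\stoppedLz$ is stopped at $\tauLze$, whereas \cref{prop:good_test_function_martingale} provides a martingale stopped at the strictly smaller time $\tauLe = \taue \wedge \tauLze$. I would bridge this gap via \cref{lem:taue_to_infty}: on the event $\{\taue \geq T\}$, which has probability tending to $1$ as $\epsilon \to 0$, the two stopping times agree on $[0,T]$ and all bounds apply verbatim; the complementary event contributes a vanishingly small probability which does not affect tightness. A secondary subtlety is the passage from scalar Aldous estimates (uniform in $\xi$ in the unit ball of $H^\sigma_x$) to an equicontinuity statement for the $H^{-\sigma}_x$-valued process, which should follow by summing over an orthonormal basis of $H^\sigma_x$ of sufficient regularity and exploiting the explicit polynomial dependence of all constants on $\norm{\xi}$.
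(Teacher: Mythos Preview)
Your architecture for $\rho\stoppedLz$ matches the paper's: compact containment via \cref{prop:L2_bound} and $L^2_x\hookrightarrow H^{-\sigma}_x$, then the Aldous criterion driven by the perturbed test functions of \cref{prop:corrector}. The reconciliation of $\tauLze$ and $\tauLe$ through \cref{lem:taue_to_infty} is also what the paper does (stated there via Slutsky's lemma at the outset). Where you and the paper differ is in the passage from scalar tightness to $H^{-\sigma}_x$-valued tightness: the paper invokes Jakubowski's criterion \cite{jakubowski1986on}, which is tailor-made for this step, whereas your ``summing over a basis'' needs care since Aldous controls moduli only in probability and such statements do not sum directly.

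The genuine gap is in your treatment of $\zeta^\epsilon$. A Kolmogorov argument requires $\esp{\norm{\zeta^\epsilon(t)-\zeta^\epsilon(t')}_{H^k_x}^p}\lesssim |t-t'|^{p/2}$ for some $p>2$, but the hypotheses do not deliver this: \cref{hyp:mixing} is an $L^2$ coupling bound and controls only second-order correlations, while \cref{hyp:m_moments} bounds moments of $\bar m$ pointwise in time, not of its time integral. Upgrading to $p>2$ would require a Rosenthal-type inequality for the mixing process, which is not available under the stated assumptions. The paper avoids higher moments entirely: it writes $\partial_x^\beta\zeta^\epsilon(t,x)$ as a square-integrable martingale plus an $O(\epsilon)$ remainder (via the resolvent $\psi_x=-R_0\theta_{x,\beta}$), applies Doob's maximal inequality, and iterates once more on the quadratic variation to obtain $\sup_\epsilon\esp{\sup_t\norm{\zeta\stopped(t)}_{H^k_x}^2}<\infty$ (\cref{prop:zeta_moments}). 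The modulus of continuity for $\zeta^\epsilon$ is then handled by the same Aldous argument, applied to $\chi(\zeta\stopped)$ with $\psi\equiv 1$; since the associated perturbed test function depends only on $(z,n)$, \cref{prop:good_test_function_martingale_cases} permits stopping at $\taue$ alone.
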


To simplify the notation, we write $C_T H^{-\sigma}_x \times C_T C^1_x$ for $C([0,T],H^{-\sigma}_x) \times C([0,T],C^1_x)$.

Owing to Slutsky's Lemma (see \cite{billingsley1999convergence}, Theorem 4.1) and to \cref{lem:taue_to_infty}, \cref{prop:tightness} is equivalent to the tightness of $\seq{(\rho\stoppedL,\zeta\stoppedL)}{\epsilon}$ and $\seq{\zeta\stopped}{\epsilon}$.

Since these processes are pathwise continuous, we have the following inequality between the modulus of continuity $w$ for continuous functions and the modulus of continuity $w'$ for càdlàg functions (see \cite{billingsley1999convergence}, equation 14.11):
\begin{equation*}
	w_X(\delta) \leq 2 w'_X(\delta),
\end{equation*}
with, for a càdlàg function $X$,
\begin{gather*}
	w_X(\delta) \doteq \sup_{0 \leq t \leq s \leq t + \delta \leq T} \norm{X(s)-X(t)}\\
	w'_X(\delta) \doteq \sup_{\seq{t_i}{i}} \max_i \sup_{t_i \leq t \leq s < t_{i+1}} \norm{X(s)-X(t)},
\end{gather*}
where $\seq{t_i}{i}$ is a subdivision of $[0,T]$. Therefore, the tightness in the Skorokhod space $D_T H^{-\sigma}_x \times D_T C^1_x$ (respectively $D_T C^1_x$) implies the tightness in $C_T H^{-\sigma}_x \times C_T C^1_x$ (respectively in $C_T C^1_x$).

Owing to Theorem 3.1. of \cite{jakubowski1986on}, tightness in the Skorokhod space follows from the following claims, which are proved in \cref{subsec:zeta_moments,subsec:Aldous} respectively.
\begin{enumerate}[label=(\roman*)]
	\item \label{item:first_claim} For all $\eta > 0$, there exists some compact sets $K_\eta \subset H^{-\sigma}_x$ and $K'_\eta \subset C^1_x$ such that for all $\epsilon > 0$,
	\begin{gather}
		 \label{eq:aim_rho_tight}
		\proba{\forall t \in [0,T], \rho\stoppedL(t) \in K_\eta} > 1 - \eta\\
		 \label{eq:fake_aim_zeta_tight}
		\proba{\forall t \in [0,T], \zeta\stoppedL(t) \in K'_\eta} > 1 - \eta\\
		 \label{eq:aim_zeta_tight}
		\proba{\forall t \in [0,T], \zeta\stopped(t) \in K'_\eta} > 1 - \eta.
	\end{gather}
	\item \label{item:second_claim} If $\phi$ is a sum of a finite number of bounded functions $\phi_i \in \Theta$, then $\seq{\phi(\rho\stoppedL,\zeta\stoppedL)}{\epsilon}$ is tight in $D([0,T],\mathbb R)$.

	For any $\tilde \phi \in \Theta$ with $\psi = 1$, $\seq{\tilde \phi(\zeta\stopped)}{\epsilon}$ is tight in $D([0,T],\mathbb R)$.
\end{enumerate}

We ask of $\phi$ to be a finite sum of test functions because Theorem 3.1. of \cite{jakubowski1986on} requires the class of test functions to separate points and to be closed under addition, but $\Theta$ does not satisfies the latter condition.

\subsection{Proof of the first claim \cref{item:first_claim}} \label{subsec:zeta_moments}

Using \cref{prop:L2_bound} and the Markov inequality, we have for $K>0$,
\begin{equation*}
	\proba{\exists t \in [0,T], \norm{\rho\stoppedL(t)}_{L^2_x} > K} \leq \frac{\esp{\sup_{t \in [0,T]} \norm{\rho\stoppedL(t)}_{L^2_x}}}{K} \lesssim_\Lambda \frac{\esp{\norm{f^\epsilon_0}_\fSet}}{K}.
\end{equation*}
Note that stopping the processes at $\tauLze$ is necessary at this point. Owing to the compact embedding $L^2_x \subset H^{-\sigma}_x$ for $\sigma > 0$, we get \cref{eq:aim_rho_tight}.

Since \cref{eq:fake_aim_zeta_tight} is a consequence of \cref{eq:aim_zeta_tight}, it remains to prove \cref{eq:aim_zeta_tight}. Owing to Ascoli's Theorem, we have a compact embedding of the Hölder space $C^{1,\delta}_x \subset C^1_x$ for any $\delta > 0$. Moreover, with $s = \floor{d/2}+2$, we have a continuous embedding $H^s_x \subset C^{1,\delta}_x$ for any $\delta \in (0,s - \frac d 2 - 1]$. Then \cref{eq:aim_zeta_tight} is a consequence of \cref{prop:zeta_moments} below and of the Markov inequality.
\begin{proposition} \label{prop:zeta_moments}
	Recall that $\taue$ is defined by \cref{eq:def_taue}. Then, for all $T>0$, we have
	\begin{equation*}
		\sup_\epsilon \esp{\sup_{t \in [0,T]} \norm{\zeta\stopped(t)}_{H^{\floor{d/2}+2}_x}^2} < \infty.
	\end{equation*}
\end{proposition}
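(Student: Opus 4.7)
The plan is to decompose each Fourier coefficient of $\zeta\stopped(t)$ via the martingale resolution of an associated Poisson equation, and then sum the resulting bounds against the Sobolev weights $w_j \doteq (1+\lambda_j)^s$ with $s=\floor{d/2}+2$. Let $\seq{\xi_j}{j}$ be the orthonormal basis of $L^2_x$ consisting of real eigenfunctions of $-\Delta$ on $\xSet$ with eigenvalues $\lambda_j \geq 0$, so that $\norm{\zeta}_{H^s_x}^2 = \sum_j w_j \abs{\scalrho{\zeta}{\xi_j}}^2$. For each $j$, introduce $\psi_j(n) \doteq \scalrho{n}{\xi_j}$; it is a pseudo-linear form with $\lrbracket{\psi_j}_{\Lip} \leq \norm{\xi_j}_{L^1(\xSet)} \lesssim 1$ uniformly in $j$. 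By \cref{lem:Pt_lip_Pt_quad} and \cref{hyp:B_continuity}, $\phi_j \doteq R_0 \psi_j$ satisfies $\phi_j,\phi_j^2 \in D(B)$ with $B\phi_j = -\psi_j$, and, viewed as a function of $(f,z,n)$ depending only on $n$, $\phi_j$ is a good test function in the sense of \cref{def:good_test_function}. The first case of \cref{prop:good_test_function_martingale_cases} then provides a $\seq{\mathcal F^\epsilon_t}{t}$-martingale $M\stopped_{\phi_j}$ together with the variance identity stated there. Unwinding the definition via $\mathcal L^\epsilon \phi_j = \epsilon^{-2} B\phi_j$ and $\int_0^t \psi_j(\bar m^\epsilon(s))ds = \epsilon \scalrho{\zeta^\epsilon(t)}{\xi_j}$ yields
\begin{equation*}
\scalrho{\zeta\stopped(t)}{\xi_j} = \epsilon M\stopped_{\phi_j}(t) - \epsilon \phi_j(\bar m\stopped(t)) + \epsilon \phi_j(\bar m(0)).
\end{equation*}

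The martingale contribution is the easier one: since the carré du champ $B\phi_j^2 - 2\phi_j B\phi_j$ is nonnegative, $\int_E B\phi_j^2\, d\nu = 0$ by $\nu$-invariance, and $\bar m^\epsilon$ is stationary, \cref{prop:good_test_function_martingale_cases} yields
\begin{equation*}
\esp{(\epsilon M\stopped_{\phi_j}(T))^2} \leq 2T \int_E R_0\psi_j \cdot \psi_j\, d\nu = T \scalrho{Q \xi_j}{\xi_j},
\end{equation*}
the last identity coming from the expression of the kernel $k$ in the proof of \cref{lem:kFQ}. Doob's inequality promotes this to $\esp{\sup_{t\leq T}(\epsilon M\stopped_{\phi_j}(t))^2} \leq 4T \scalrho{Q\xi_j}{\xi_j}$, and summation with weights combined with Parseval yields $\sum_j w_j \scalrho{Q\xi_j}{\xi_j} = \sum_i q_i \norm{F_i}_{H^s_x}^2 < \infty$, which is precisely the intermediate estimate established inside the proof of \cref{lem:qiFi_summable}.

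The crucial step is handling the boundary terms, for which the uniform-in-$j$ bound $\abs{\phi_j(n)} \lesssim 1 + \norm{n}_E$ provided by \cref{lem:Pt_lip_Pt_quad} is \emph{not} summable against $w_j$. The remedy is to exploit the fact that the averaged flow preserves the $E$-regularity of its initial datum. Using the coupling of \cref{hyp:mixing} together with Jensen's inequality in $E$, one derives the $E$-valued bound $\norm{\esp{m(t,n)}}_E \leq \gamma_{mix}(t)\parths{\norm{n}_E + \int_E \norm{n'}_E\,d\nu(n')}$, and integrating in $t$ defines the Bochner integral
\begin{equation*}
U(n) \doteq \int_0^\infty \esp{m(t,n)}\,dt \in E, \qquad \norm{U(n)}_E \lesssim 1 + \norm{n}_E.
\end{equation*}
By Fubini and linearity, $\phi_j(n) = \scalrho{U(n)}{\xi_j}$ is the $j$-th coefficient of $U(n)$ in the basis $\seq{\xi_j}{j}$, so
\begin{equation*}
\sum_j w_j \abs{\phi_j(n)}^2 = \norm{U(n)}_{H^s_x}^2 \lesssim \norm{U(n)}_E^2 \lesssim 1 + \norm{n}_E^2,
\end{equation*}
through the continuous embedding $E \subset H^s_x$ on the compact torus. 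Combined with the bound $\norm{\bar m\stopped(t)}_E \leq \epsilon^{-\alpha} \vee \norm{\bar m(0)}_E$ from \cref{lem:taue_to_infty} and $\alpha < 1$ (ensured by \cref{eq:hyp_alpha} and \cref{hyp:m_L2_all_starting_point}), the stopped and initial boundary contributions satisfy
\begin{equation*}
\epsilon^2 \sum_j w_j \esp{\sup_{t\in[0,T]} \abs{\phi_j(\bar m\stopped(t))}^2 + \abs{\phi_j(\bar m(0))}^2} \lesssim \epsilon^{2-2\alpha} + \epsilon^2 \esp{\norm{\bar m(0)}_E^2},
\end{equation*}
which remains uniformly bounded as $\epsilon \to 0$ by \cref{hyp:m_L2_all_starting_point}.

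Assembling the three contributions, and exchanging $\sup_t$ with $\sum_j$ via Tonelli's theorem (all terms being nonnegative), yields $\sup_\epsilon \esp{\sup_{t\in[0,T]} \norm{\zeta\stopped(t)}_{H^s_x}^2} < \infty$. The main obstacle is the summability of the boundary terms: the trivial $j$-uniform bound on $\phi_j$ from \cref{lem:Pt_lip_Pt_quad} is insufficient, and recovering the necessary high-frequency decay requires extracting the $E$-valued smoothness of the averaged trajectories $U(n)$. Once this observation is in place, the remainder is a standard combination of Poisson resolvent computations, Doob's maximal inequality, and the martingale-problem machinery already developed in \cref{subsec:generator_martingale,subsec:pseudolin_pseudoquad}.
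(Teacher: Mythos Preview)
Your proof is correct and takes a genuinely different route from the paper's argument, with interesting trade-offs on both sides.

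The paper works pointwise in $x$: for each multi-index $\beta$ with $\abs{\beta}\le s$ it introduces $\theta_{x,\beta}(n)=\partial_x^\beta n(x)$ and its resolvent $\psi_x=-R_0\theta_{x,\beta}$, obtains the same martingale decomposition you use, but then handles the variance $\esp{\int_0^{T\wedge\taue}(B\psi_x^2-2\psi_x B\psi_x)(\bar m^\epsilon(s))\,ds}$ by a \emph{second} Poisson-equation iteration (a new corrector $\tilde\psi_x$ solving $B\tilde\psi_x=\tilde\theta_{x,\beta}-2\lrangle{\theta_{x,\beta}R_0\theta_{x,\beta}}$ and a new martingale $M_{\epsilon^2\tilde\psi_x}$). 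Because the paper works at fixed $x$, the boundary terms $\epsilon\psi_x(\bar m\stopped(t))$ are controlled trivially by $\epsilon^{1-\alpha}$ and there is no summability issue; the $H^s_x$ estimate follows by integrating in $x$ at the end.

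Your Fourier approach replaces that second iteration by a single observation: the carr\'e du champ $B\phi_j^2-2\phi_j B\phi_j$ is nonnegative, so one may enlarge the stopped time interval $[0,T\wedge\taue]$ to $[0,T]$ and then invoke stationarity of the \emph{unstopped} process $\bar m^\epsilon$ to obtain directly $\esp{(\epsilon M\stopped_{\phi_j}(T))^2}\le T\scalrho{Q\xi_j}{\xi_j}$, which is summable against $w_j$ by the computation inside \cref{lem:qiFi_summable}. This is cleaner than the paper's double iteration, and in fact the same carr\'e-du-champ shortcut would also simplify the paper's pointwise argument. The price you pay is that the boundary terms $\epsilon\phi_j(\bar m\stopped(t))$ no longer come with any high-frequency decay from \cref{lem:Pt_lip_Pt_quad} alone; your remedy of assembling them into the $E$-valued Bochner integral $U(n)=\int_0^\infty\esp{m(t,n)}\,dt$ and reading off $\phi_j(n)=\scalrho{U(n)}{\xi_j}$ is exactly what is needed, and the bound $\norm{U(n)}_E\lesssim 1+\norm n_E$ follows from the coupling in \cref{hyp:mixing} together with the centering $\int_E n'\,d\nu(n')=0$.

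In short: the paper keeps the boundary terms trivial but pays with a second martingale layer; you keep the martingale layer single but must work harder on the boundary terms. Both arguments rely on the same ingredients from \cref{subsec:pseudolin_pseudoquad,subsec:generator_martingale} and on $\alpha<1$.
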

\begin{proof}
	The idea of this proof is to express $\zeta^\epsilon$ (and its derivatives) as a sum of a small term and a martingale, and then to estimate the martingale using Doob's Maximal Inequality. This argument is used two times in a row, and the estimates heavily rely on \cref{hyp:B_continuity,hyp:B_subpolynomial}.

	Since $\zeta^\epsilon(t) \in E = C^{2\floor{d/2}+4}_x \subset H^{\floor{d/2}+2}_x$, it is sufficient to prove that for all multi-indices $\beta$ of length $\abs{\beta} \leq \floor{d/2}+2$, we have
	\begin{equation*}
		\sup_\epsilon \esp{\sup_{t \in [0,T]} \norm{\frac{\partial^{\abs{\beta}} \zeta\stopped(t)}{\partial x^\beta}}_{L^2_x}^2} < \infty.
	\end{equation*}
	Fix such a $\beta$ and let $\epsilon > 0$. First note that
	\begin{equation} \label{eq1:prop_zeta_moment}
		\esp{\sup_{t \in [0,T]} \norm{\frac{\partial^{\abs{\beta}} \zeta\stopped(t)}{\partial x^\beta}}_{L^2_x}^2} \leq \int \esp{\sup_{t \in [0,T]} \abs{\frac{\partial^{\abs{\beta}} \zeta\stopped(t,x)}{\partial x^\beta}}^2}dx.
	\end{equation}
	For $x \in \xSet$, define $\theta_{x,\beta} \in E^*$ by
	\begin{equation*}
		\forall n \in E, \theta_{x,\beta}(n) = \frac{\partial^{\abs{\beta}} n}{\partial x^\beta}(x).
	\end{equation*}
	Since $\bar m^\epsilon$ is almost surely an $E$-valued càdlàg function, the derivative and the integral commute in the following computation:
	\begin{equation*}
		\frac{\partial^{\abs{\beta}} \zeta^\epsilon(t,x)}{\partial x^\beta} = \frac 1 \epsilon \int_0^t \frac{\partial^{\abs{\beta}} \bar m^\epsilon(s,x)}{\partial x^\beta} ds = \frac{1}{\epsilon} \int_0^t \theta_{x,\beta}(\bar m^\epsilon(s)) ds.
	\end{equation*}
	Owing to the identity $\lrangle{\theta_{x,\beta}} = 0$, \cref{lem:Pt_lip_Pt_quad,hyp:B_continuity}, the function $\psi_x \doteq - R_0\theta_{x,\beta}$ is well-defined, is Lipschitz continuous with $\lrbracket{\psi_x}_{\Lip} \lesssim \lrbracket{\theta_{x,\beta}}_{\Lip} = 1$ and $\psi_x, \psi_x^2 \in D(B)$. Therefore \cref{prop:good_test_function_martingale_cases} states that
	\begin{align*}
		M^\epsilon_{\epsilon \psi_x}(t)
			&= \epsilon \psi_x(\bar m^\epsilon(t)) - \epsilon \psi_x(\bar m(0)) - \frac{1}{\epsilon^2} \int_0^t \epsilon B \psi_x(\bar m^\epsilon(s))ds\\
			&= \epsilon \psi_x(\bar m^\epsilon(t)) - \epsilon \psi_x(\bar m(0)) - \frac{\partial^{\abs{\beta}} \zeta^\epsilon(t,x)}{\partial x^\beta}
	\end{align*}
	defines a square-integrable martingale such that
	\begin{equation} \label{eq:prop_zeta_variance_martingale_variance}
		\esp{\abs{M\stopped_{\epsilon \psi_x}(t)}^2} = \esp{\int_0^{t \wedge \taue} \parths{B(\psi_x^2) - 2 \psi_x B\psi_x}(\bar m^\epsilon(s))ds}.
	\end{equation}
	Since $\lrbracket{\psi_x}_{\Lip} \lesssim \lrbracket{\theta_{x,\beta}}_{\Lip} = 1$ and $\alpha < 1$ in \cref{eq:hyp_alpha}, we have
	\begin{equation*}
		\esp{\sup_{t \in [0,T]} \abs{\epsilon \psi_x(\bar m\stopped(t))}^2} \lesssim \epsilon^2 \esp{\epsilon^{-2\alpha} \vee \norm{\bar m(0)}_E^2} \lesssim 1,
	\end{equation*}
	and by Doob's Maximal Inequality, we get
	\begin{align}
		\esp{\sup_{t \in [0,T]} \abs{\frac{\partial^{\abs{\beta}} \zeta\stopped(t,x)}{\partial x^\beta}}^2}^{\frac 1 2}
			&\lesssim 1 + \esp{\sup_{t \in [0,T]} \abs{M\stopped_{\epsilon \psi_x}(t)}^2}^{\frac 1 2} \nonumber\\
			&\lesssim 1 + \esp{\abs{M\stopped_{\epsilon \psi_x}(T)}^2}^{\frac 1 2}. \label{eq0:prop_zeta_moment}
	\end{align}
	Owing to \cref{prop:good_test_function_martingale_cases}, we have
	\begin{equation*}
		\esp{\abs{M\stopped_{\epsilon \psi_x}(T)}^2} = \esp{\int_0^{T \wedge \taue} \parths{B(\psi_x^2) - 2 \psi_x B \psi_x}(\bar m^\epsilon(s))ds},
	\end{equation*}
	For now, we only know that the right-hand side is of order $\epsilon^{-2\alpha}$, by \cref{eq:def_taue,eq:prop_zeta_variance_martingale_variance}. To retrieve an estimate uniform in $\epsilon$, we use the same martingale argument as before. Let
	\begin{equation*}
	\tilde \theta_{x,\beta} \doteq B(\psi_x^2) - 2 \psi_x B \psi_x = B((R_0 \theta_{x,\beta})^2) + 2 \theta_{x,\beta} R_0 \theta_{x,\beta},
	\end{equation*}
	so that
	\begin{equation} \label{eq2:prop_zeta_moment}
		\esp{\abs{M\stopped_{\epsilon \psi_x}(T)}^2} = \esp{\int_0^{T \wedge \taue} \tilde \theta_{x,\beta} (\bar m^\epsilon(s))ds}.
	\end{equation}
	Since $\theta_{x,\beta}$ and $R_0 \theta_{x,\beta}$ are pseudo-linear functions, the function $\theta_{x,\beta} R_0 \theta_{x,\beta}$ is pseudo-quadratic. Thus, by \cref{lem:Pt_lip_Pt_quad,hyp:B_continuity}, the function
	\begin{equation*}
		\tilde \psi_x = (R_0 \theta_{x,\beta})^2 - 2 R_0 \lrbracket{\theta_{x,\beta} R_0 \theta_{x,\beta} - \lrangle{\theta_{x,\beta} R_0 \theta_{x,\beta}}},
	\end{equation*}
	is well-defined and satisfies $\tilde \psi_x, \tilde \psi_x^2 \in D(B)$ and $B \tilde \psi_x = \tilde \theta_{x,\beta} - 2 \lrangle{\theta_{x,\beta} R_0 \theta_{x,\beta}}$. As before, introduce the martingale process
	\begin{align*}
		M^\epsilon_{\epsilon^2 \tilde \psi_x}(t)
			&= \epsilon^2 \tilde \psi_x(\bar m^\epsilon(t)) - \epsilon^2 \tilde \psi_x(\bar m(0)) - \frac{1}{\epsilon^2} \int_0^t \epsilon^2 B \tilde \psi_x(\bar m^\epsilon(s))ds\\
			&= \epsilon^2 \tilde \psi_x(\bar m^\epsilon(t)) - \epsilon^2 \tilde \psi_x(\bar m(0)) - \int_0^t \tilde \theta_{x,\beta} (\bar m^\epsilon(s))ds + 2 t \lrangle{\theta_{x,\beta} R_0 \theta_{x,\beta}}.
	\end{align*}
	Owing to \cref{lem:Pt_lip_Pt_quad}, we have
	\begin{gather*}
		\forall n \in E, \abs{\tilde \psi_x(n)} \lesssim (1 + \norm{n}_E^{b+1} + \norm{n}_E^2)\\
		\lrangle{\theta_{x,\beta} R_0 \theta_{x,\beta}} \lesssim 1.
	\end{gather*}
	Using the conditions $\alpha (b+1) < 2$ and $\alpha <  1$ in \cref{eq:hyp_alpha}, and using the finiteness of moments of order $2(b+1)$ and $4$ of $\bar m(0)$ in \cref{hyp:m_L2_all_starting_point}, we get
	\begin{equation} \label{eq3:prop_zeta_moment}
		\esp{\int_0^{T \wedge \taue} \tilde \theta_{x,\beta} (\bar m^\epsilon(s))ds} \lesssim_T 1 + \esp{\abs{M\stopped_{\epsilon^2 \tilde \psi_x}(T)}},
	\end{equation}
	where, owing to \cref{prop:good_test_function_martingale_cases},
	\begin{equation*}
		\esp{\abs{M\stopped_{\epsilon^2 \tilde \psi_x}(T)}^2} = \epsilon^2 \esp{\int_0^{T \wedge \taue} \parths{B(\tilde \psi_x^2) - 2 \tilde \psi_x B \tilde \psi_x}(\bar m^\epsilon(s))ds}.
	\end{equation*}
	Owing to \cref{hyp:B_subpolynomial}, we have
	\begin{equation*}
	\forall n \in E, \abs{\parths{B(\tilde \psi_x^2) - 2 \tilde \psi_x B \tilde \psi_x}(n)} \lesssim (1 + \norm{n}_E^{2(b+1)} + \norm{n}_E^4).
	\end{equation*}
	Since $\alpha (b+1) < 1$ and $2 \alpha < 1$ in \cref{eq:hyp_alpha} and since $\bar m(0)$ has finite moments of order $2(b+1)$ and $4$ in \cref{hyp:m_L2_all_starting_point}, we get
	\begin{equation} \label{eq4:prop_zeta_moment}
		\esp{\abs{M\stopped_{\epsilon^2 \tilde \psi_x}(T)}^2} \lesssim_T 1.
	\end{equation}
	Gathering the estimates \cref{eq0:prop_zeta_moment}, \cref{eq2:prop_zeta_moment}, \cref{eq3:prop_zeta_moment} and \cref{eq4:prop_zeta_moment}, we obtain the required result
	\begin{equation*}
		\sup_\epsilon \sup_{x \in \xSet} \esp{\sup_{t \in [0,T]} \abs{\frac{\partial^{\abs{\beta}} \zeta\stopped(t,x)}{\partial x^\beta}}^2} \lesssim_T 1.
	\end{equation*}
	This concludes the proof by \cref{eq1:prop_zeta_moment}.
\end{proof}

\Cref{prop:zeta_moments}, together with the compact embedding $H^s_x \subset C^1_x$ and the Markov inequality, proves that \cref{eq:aim_zeta_tight} holds, hence \cref{eq:fake_aim_zeta_tight}. This concludes the proof of \cref{item:first_claim}.

\subsection{Proof of the second claim \cref{item:second_claim}} \label{subsec:Aldous}

As in \cite{debussche2017diffusion}, we prove \cref{item:second_claim} using the Aldous criterion (\cite{jacod2003limit}, Theorem 4.5 p356).

Let $\phi = \sum_i \phi_i$ be the sum of a finite number of bounded functions $\phi_i \in \Theta$. We set $X^\epsilon = (f^\epsilon,\zeta^\epsilon,\bar m^\epsilon)$ and $\bar X^\epsilon = (\rho^\epsilon,\zeta^\epsilon)$. Recall that if $\tilde \phi \in \Theta$ depends only on $z$, then the perturbed test function $\tilde \phi^\epsilon$ defined by \cref{prop:corrector} depends only on $n$ and $z$. Using \cref{prop:good_test_function_martingale_cases}, this allows us to stop the processes only at $\taue$ instead of $\tauLe$ while keeping the same estimates. Therefore, the proof of the tightness of $\seq{\tilde \phi(\zeta\stopped)}{\epsilon}$ is the same as of $\seq{\phi(\bar X\stoppedL)}{\epsilon}$, and is thus omitted. It only remains to prove $\seq{\phi(\bar X\stoppedL)}{\epsilon}$ is tight.

The Aldous criterion gives a sufficient condition for the tightness of the family $\seq{\phi(\bar X\stoppedL)}{\epsilon}$ in $D([0,T],\mathbb R)$: since $\phi$ is bounded, is it sufficient to prove that
\begin{equation} \label{eq:aldous}
	\forall \eta > 0, \lim_{\delta \to 0} \limsup_{\epsilon \to 0} \sup_{\substack{\tau_1,\tau_2 \leq T\\\tau_1 \leq \tau_2 \leq \tau_1 + \delta}} \proba{\abs{\phi(\bar X\stoppedL(\tau_2)) - \phi(\bar X\stoppedL(\tau_1))} > \eta} = 0,
\end{equation}
where $\tau_1,\tau_2$ are any $\seq{\mathcal F^\epsilon_t}{t \in \tSet}$-stopping times.

Define the perturbed test function $\phi^\epsilon = \sum_i \phi_i^\epsilon$. This sum satisfies the estimates \cref{eq:control_phi_1,eq:control_phi_2,eq:control_perturbed_generator}. Then, define
\begin{align}
	\theta^\epsilon(t)
		&= \phi(\bar X^\epsilon(0)) + \phi^\epsilon(X^\epsilon(t)) - \phi^\epsilon(X^\epsilon(0)) \label{eq:def_thetae}\\
		&= \phi(\bar X^\epsilon(0)) + \int_0^t \mathcal L^\epsilon \phi^\epsilon (X^\epsilon(s))ds + M^\epsilon_{\phi^\epsilon}(t), \label{eq:expr_thetae}
\end{align}
where $M^\epsilon_{\phi^\epsilon}$ is defined by \cref{prop:good_test_function_martingale}, so that
\begin{equation*}
	\begin{split}
	\phi(\bar X\stoppedL(\tau_2)) - \phi(\bar X\stoppedL(\tau_1))
		&= \parths{\theta\stoppedL(\tau_2) - \theta\stoppedL(\tau_1)} - \parths{\phi^\epsilon(X\stoppedL(\tau_2)) - \phi(\bar X\stoppedL(\tau_2))}\\
		&\quad + \parths{\phi^\epsilon(X\stoppedL(\tau_1)) - \phi(\bar X\stoppedL(\tau_1))}.
	\end{split}
\end{equation*}
Using \cref{eq:taue_bounds_m}, \cref{prop:corrector,prop:L2_bound}, we get
\begin{multline*}
	\abs{\phi^\epsilon(X\stoppedL(t)) - \phi(\bar X\stoppedL(t))}\\
	\lesssim_{\phi,\Lambda} (1 + \norm{f^\epsilon_0}_\fSet^2) (\epsilon (1 + \epsilon^{-\alpha} \vee \norm{\bar m(0)}) + \epsilon^2 (1 + \epsilon^{-\alpha(b+1)} \vee \norm{\bar m(0)}^{b+1})).
\end{multline*}
Since $\alpha < 1$ and $\alpha (b+1) < 2$ in \cref{eq:hyp_alpha}, we get
\begin{equation*}
	\esp{\sup_{t \in [0,T]} \abs{\phi^\epsilon(X\stoppedL(t)) - \phi(\bar X\stoppedL(t))}} \xrightarrow[\epsilon \to 0]{} 0,
\end{equation*}
hence, when $\epsilon \to 0$,
\begin{equation*}
	\sup_{\tau_1,\tau_2} \esp{\abs{\phi(\bar X\stoppedL(\tau_2)) - \phi(\bar X\stoppedL(\tau_1))}} \leq \sup_{\tau_1,\tau_2} \esp{\abs{\theta\stoppedL(\tau_2) - \theta\stoppedL(\tau_1)}} + o(1).
\end{equation*}
Using the Markov inequality, we get
\begin{equation*}
	\sup_{\tau_1,\tau_2} \proba{\abs{\phi(\bar X\stoppedL(\tau_2)) - \phi(\bar X\stoppedL(\tau_1))} > \eta} \leq \sup_{\tau_1,\tau_2} \frac{\esp{\abs{\theta\stoppedL(\tau_2) - \theta\stoppedL(\tau_1)}}}{\eta} + o(1).
\end{equation*}
Therefore, it is sufficient to prove that
\begin{equation} \label{eq:aim_aldous2}
	\sup_{\tau_1, \tau_2, \epsilon} \esp{\abs{\theta\stoppedL(\tau_2) - \theta\stoppedL(\tau_1)}} \xrightarrow[\delta \to 0]{} 0,
\end{equation}
to deduce \cref{eq:aldous} and then to use Aldous criterion.

Owing to \cref{eq:expr_thetae}, we have
\begin{equation} \label{eq:temp_new_aim_final_cut}
	\abs{\theta\stoppedL(\tau_2) - \theta\stoppedL(\tau_1)} \leq \int_{\tau_1 \wedge \tauLe}^{\tau_2 \wedge \tauLe} \abs{\mathcal L^\epsilon \phi^\epsilon(X^\epsilon(s))} ds + \abs{M\stoppedL_{\phi^\epsilon}(\tau_2) - M\stoppedL_{\phi^\epsilon}(\tau_1)}.
\end{equation}
Using once again \cref{eq:taue_bounds_m}, \cref{prop:corrector,prop:L2_bound}, we get
\begin{align*}
	\abs{\mathcal L^\epsilon \phi^\epsilon (X\stoppedL(t))}
		&\lesssim_{\phi,\Lambda} \abs{\mathcal L \phi(\bar X\stoppedL(s))} + \epsilon (1 + \norm{f^\epsilon_0}_\fSet^3)(1 + \epsilon^{-\alpha(b+2)} \vee \norm{\bar m(0)}^{b+2})\\
		&\lesssim_{\phi,\Lambda} 1 + \norm{f^\epsilon_0}_\fSet^2 + \epsilon (1 + \norm{f^\epsilon_0}_\fSet^3)(1 + \epsilon^{-\alpha(b+2)} \vee \norm{\bar m(0)}^{b+2}).
\end{align*}
Using the Cauchy-Schwarz inequality, the condition $\alpha(b+2)<1$ in \cref{eq:hyp_alpha}, \cref{hyp:m_L2_all_starting_point,hyp:f0}, we get
\begin{equation} \label{eq:temp1}
	\esp{\sup_{t \in [0,T]} \abs{\mathcal L^\epsilon \phi^\epsilon (X\stoppedL(t))}} \lesssim_{\phi,\Lambda} 1.
\end{equation}
Thus, we get
\begin{align*}
	\sup_\epsilon \sup_{\tau_1,\tau_2} \esp{\int_{\tau_1 \wedge \tauLe}^{\tau_2 \wedge \tauLe} \abs{\mathcal L^\epsilon \phi^\epsilon(X^\epsilon(s))} ds}
		&\leq \sup_\epsilon \sup_{\tau_1,\tau_2} \delta \esp{\sup_{t \in [0,T]} \abs{\mathcal L^\epsilon \phi^\epsilon (X\stoppedL(t))}} \xrightarrow[\delta \to 0]{} 0.
\end{align*}
The last term of \cref{eq:temp_new_aim_final_cut} is controlled using martingale arguments. Owing to \cref{prop:good_test_function_martingale}, $M\stoppedL_{\phi^\epsilon}$ is indeed a square-integrable martingale and
\begin{align*}
	\esp{\abs{M\stoppedL_{\phi^\epsilon}(\tau_2) - M\stoppedL_{\phi^\epsilon}(\tau_1)}^2}
		&= \esp{\abs{M\stoppedL_{\phi^\epsilon}(\tau_2)}^2 - \abs{M\stoppedL_{\phi^\epsilon}(\tau_1)}^2}\\
		&= \frac{1}{\epsilon^2} \esp{\int_{\tau_1 \wedge \tauLe}^{\tau_2 \wedge \tauLe} \parths{B((\phi^\epsilon)^2) - 2 \phi^\epsilon B \phi^\epsilon}(X^\epsilon(s)) ds}\\
		&= \esp{\int_{\tau_1 \wedge \tauLe}^{\tau_2 \wedge \tauLe} \sum_{i = -2}^{2}\epsilon^i r_i(X^\epsilon(s)) ds}
\end{align*}
where the terms $r_i$ are obtained by writing $\phi^\epsilon = \phi + \epsilon \phi_1 + \epsilon^2 \phi_2$ and expanding $B((\phi^\epsilon)^2) - 2 \phi^\epsilon B \phi^\epsilon$. The terms containing $\phi$ vanish, using $B\phi = 0$, $B(\phi^2) = 0$ and $B \phi \phi_j = \phi B \phi_j$ (since $\phi$ does not depend on $n$). Using \cref{hyp:B_subpolynomial}, the remaining terms satisfy
\begin{gather*}
	r_{-2} = r_{-1} = 0,\\
	r_0(f,z,n) = \lrbracket{B(\phi_1^2) - 2 \phi_1 B \phi_1}(f,z,n) \lesssim_{\phi} (1 + \norm{f}_\fSet^2)(1 + \norm{n}_E^2),\\
	r_1(f,z,n) = \lrbracket{2B(\phi_1 \phi_2) - \phi_1 B \phi_2 - \phi_2 B \phi_1}(f,z,n) \lesssim_{\phi} (1 + \norm{f}_\fSet^3)(1 + \norm{n}_E^{b+2}),\\
	r_2(f,z,n) = \lrbracket{B(\phi_2^2) - 2 \phi_2 B \phi_2}(f,z,n) \lesssim_{\phi} (1 + \norm{f}_\fSet^4)(1 + \norm{n}_E^{2(b+1)}).
\end{gather*}
As for \cref{eq:temp1}, using that $\alpha(b+2)<1$ in \cref{eq:hyp_alpha}, we have for $i \in \set{1,2}$
\begin{equation*}
	\esp{\sup_{t \in [0,T]} \epsilon^i r_i(X\stoppedL(t))} \lesssim_{\phi,\Lambda} 1,
\end{equation*}
and
\begin{equation*}
	\sup_\epsilon \sup_{\tau_1,\tau_2} \esp{\int_{\tau_1 \wedge \tauLe}^{\tau_2 \wedge \tauLe} \epsilon^i r_i(X^\epsilon(s)) ds} \xrightarrow[\delta \to 0]{} 0.
\end{equation*}
We need to be more cautious when dealing with $r_0$, since there are no $\epsilon$ left to compensate the $\epsilon^{-2\alpha}$ that would appear from bounding $\bar m\stopped$ from above using \cref{prop:L2_bound}. The idea is to use estimates for $f\stoppedL$ and $\bar m^\epsilon$ (instead of $\bar m\stoppedL$), using that for $s \leq \tauLe$, $\bar m\stoppedL(s) = \bar m^\epsilon(s)$. We write
\begin{align*}
	\esp{\int_{\tau_1 \wedge \tauLe}^{\tau_2 \wedge \tauLe} r_0(X^\epsilon(s))ds}
		&\lesssim_{\phi} \esp{\int_{\tau_1 \wedge \tauLe}^{\tau_2 \wedge \tauLe} (1 + \norm{f\stoppedL(s)}_\fSet^2)(1 + \norm{\bar m^\epsilon(s)}_E^2) ds}\\
		&\lesssim_{\phi,\Lambda} \esp{\int_{\tau_1}^{\tau_2} (1 + \norm{f^\epsilon_0}_\fSet^2)(1 + \norm{\bar m^\epsilon(s)}_E^2) ds}\\
		&\lesssim_{\phi,\Lambda} \int_0^T \esp{\indset{[\tau_1,\tau_2]}(s) (1 + \norm{f^\epsilon_0}_\fSet^2)(1 + \norm{\bar m^\epsilon(s)}_E^2)} ds.
\end{align*}
Then, we use the Hölder inequality to write
\begin{multline*}
	\esp{\int_{\tau_1 \wedge \tauLe}^{\tau_2 \wedge \tauLe} r_0(X^\epsilon(s))ds}\\
	\begin{split}
		&\lesssim_{\phi,\Lambda} \int_0^T \esp{\indset{[\tau_1,\tau_2]}(s)}^{\frac 1 3} \esp{1 + \norm{f^\epsilon_0}_\fSet^6}^{\frac 1 3} \esp{1 + \norm{\bar m^\epsilon(s)}_E^6}^{\frac 1 3} ds\\
		&\lesssim_{\phi,\Lambda} \int_0^T \esp{\indset{[\tau_1,\tau_2]}(s)}^{\frac 1 3} ds \esp{1 + \norm{f^\epsilon_0}_\fSet^6}^{\frac 1 3} \esp{1 + \norm{\bar m^\epsilon(0)}_E^6}^{\frac 1 3},
	\end{split}
\end{multline*}
by stationarity of $\bar m$. Using the Cauchy-Schwarz inequality, \cref{hyp:m_L2_all_starting_point,hyp:f0}, we get
\begin{align*}
	\esp{\int_{\tau_1 \wedge \tauLe}^{\tau_2 \wedge \tauLe} r_0(X^\epsilon(s))ds}
		&\lesssim_{\phi,\Lambda} \int_0^T \esp{\indset{[\tau_1,\tau_2]}(s)}^{\frac 1 2} ds\\
		&\lesssim_{\phi,\Lambda,T} \parths{\int_0^T \esp{\indset{[\tau_1,\tau_2]}(s)} ds}^{\frac 1 2}\\
		&\lesssim_{\phi,\Lambda,T} \delta^{\frac 1 2} \to 0,
\end{align*}
uniformly in $\epsilon$, $\tau_1$ and $\tau_2$. This concludes the proof of \cref{eq:aim_aldous2}.

We are now in position to apply Aldous' criterion, which proves that the family $\seq{(\rho\stoppedLz,\zeta\stoppedLz)}{\epsilon}$ is tight in $C_T H^{-\sigma}_x \times C_T C^1_x$. This concludes the proof of \cref{item:second_claim}, and of \cref{prop:tightness}.

\section{Identification of the limit points} \label{sec:convergence}

In this section, we establish the first convergence result stated in \cref{thm:main_result}.

We start by proving the convergence of the auxiliary process $\zeta^\epsilon$ in \cref{subsec:convergence_of_the_auxiliary_process}, using the convergence of a simplified martingale problem. Then, in \cref{subsec:convergence_martingale_problem}, we determine the stopped martingale problem solved by a limit point of the stopped process. In \cref{subsec:identification_limit_point}, we use this stopped martingale to identify the limit point of the stopped process. We conclude on the convergence of the unstopped process in \cref{subsec:convergence}.



\subsection{Convergence of the auxiliary process} \label{subsec:convergence_of_the_auxiliary_process}

Proving the convergence of $\zeta^\epsilon$ is much simpler than for the coupled process $\bar X^\epsilon$. Indeed, as seen in particular in \cref{prop:tightness}, the only stopping time we need is $\taue$, and $\taue \xrightarrow[\epsilon \to 0]{} +\infty$. Therefore, the convergence of martingale problems is a little more intricate than the proof used in \cite{debussche2011diffusion}, but it remains straightforward.

\begin{proposition} \label{prop:convergence_of_the_auxiliary_process}
	The process $\zeta^\epsilon$ converges in distribution in $C_T C^1_x$ to a Wiener process of covariance $Q$ when $\epsilon \to 0$.
\end{proposition}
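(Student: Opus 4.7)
The plan is to combine the tightness of $\{\zeta^\epsilon\}$ already established in \cref{prop:tightness} with a martingale argument restricted to test functions depending only on $z$. By Prokhorov's theorem, it suffices to show that every weak limit point $\zeta$ of a subsequence $\zeta^{\epsilon_i}$ in $C_T C^1_x$ is a Wiener process on $C^1_x$ with covariance $Q$ starting at $0$. The initial condition $\zeta(0)=0$ is inherited from $\zeta^\epsilon(0)=0$.

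For the identification of $\zeta$, I would consider test functions $\tilde\phi \in \Theta$ of the form $\tilde\phi(\rho,z) = \chi(z)$ (i.e.\ $\psi = 1$). Inspection of \cref{eq:def_phi_1,eq:def_l,eq:def_q} shows that all $\rho$-dependent terms involve $D\Psi$ or $D^2\Psi$ and hence vanish, so both correctors $\phi_1$ and $\phi_2$ depend only on $(z,n)$ (as stated in the last sentence of \cref{prop:corrector}), and only the last term of \cref{eq:def_L_phi} survives:
\begin{equation*}
\mathcal L\tilde\phi(z) = \int_E R_0\!\lrbracket{D^2\chi(z)(\cdot,n)}(n)\,d\nu(n).
\end{equation*}
By exactly the Mercer-kernel computation carried out in the proof of \cref{prop:limit_generator}, this equals $\tfrac{1}{2}\Tr\lrbracket{Q^{1/2} D^2\chi(z) (Q^{1/2})^*}$, which is precisely the generator of a Wiener process on $C^1_x$ with covariance $Q$. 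Since $\tilde\phi^\epsilon$ does not depend on $f$, the first case of \cref{prop:good_test_function_martingale_cases} applies with the weaker stopping time $\taue$, yielding that
\begin{equation*}
M\stopped_{\tilde\phi^\epsilon}(t) = \tilde\phi^\epsilon(\zeta^\epsilon(t\wedge\taue),\bar m^\epsilon(t\wedge\taue)) - \tilde\phi^\epsilon(0,\bar m(0)) - \int_0^{t\wedge\taue} \mathcal L^\epsilon \tilde\phi^\epsilon(X^\epsilon(s))\,ds
\end{equation*}
is a martingale, with an analogous identity for the variance via $\tilde\phi^{\epsilon,2}$.

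Next I would pass to the limit $\epsilon_i \to 0$ along the chosen subsequence. By \cref{lem:taue_to_infty}, $\taue \to \infty$ in probability, so the cut-off at $\taue$ disappears. The bounds \cref{eq:control_phi_1,eq:control_phi_2}, combined with the polynomial growth in $\norm{n}_E$ of the correctors and the constraint $\alpha(b+2)<1$ from \cref{eq:hyp_alpha}, show that $\sup_{t\in[0,T]}|\tilde\phi^\epsilon(X^\epsilon(t)) - \chi(\zeta^\epsilon(t))| \to 0$ in $\mathbb L^1$; similarly \cref{eq:control_perturbed_generator} gives $\mathcal L^\epsilon\tilde\phi^\epsilon(X^\epsilon) \to \mathcal L\tilde\phi(\zeta^\epsilon)$ in $\mathbb L^1$ uniformly on $[0,T]$. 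Since $\chi$, $D\chi$, $D^2\chi$ are bounded and the moments of $\bar m(0)$ of orders appearing in the estimates are finite by \cref{hyp:m_L2_all_starting_point}, by stationarity one obtains the uniform integrability required to take the limit in the martingale relation tested against bounded continuous functionals of $\zeta^{\epsilon_i}\!\restriction_{[0,s]}$ (using the Skorokhod representation on the tight family $\zeta^{\epsilon_i}$ to convert weak convergence into almost sure convergence in $C_T C^1_x$). The result is that the process
\begin{equation*}
t \mapsto \chi(\zeta(t)) - \chi(0) - \int_0^t \mathcal L\tilde\phi(\zeta(s))\,ds
\end{equation*}
is a martingale for the filtration generated by $\zeta$, and similarly for $\chi^2$ in place of $\chi$.

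By the second part of \cref{prop:limit_generator} applied to test functions depending only on $z$ (equivalently, by Lévy's characterization on $C^1_x$ using the expression of $\mathcal L$ above), any process satisfying this martingale problem with initial condition $0$ is a Wiener process on $C^1_x$ with covariance $Q$. In particular, its law is uniquely determined, so every subsequential limit coincides and the whole family $\zeta^\epsilon$ converges in distribution to such a Wiener process. The main obstacle I expect is the uniform integrability required to pass to the limit, since the $\Lambda$-stopping time is unavailable here and the only $f$-independent bounds at our disposal come from moments of $\bar m$; the key point is that the $f$-independence of $\tilde\phi^\epsilon$ means only polynomial bounds in $\norm{n}_E$ are needed, and these combine well with \cref{hyp:m_L2_all_starting_point} and the inequality $\alpha(b+2)<1$.
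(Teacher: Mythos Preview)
Your proposal is correct and follows essentially the same route as the paper: extract a subsequential limit by tightness, restrict to test functions $\tilde\phi(\rho,z)=\chi(z)$ so that the correctors depend only on $(z,n)$ and only the stopping time $\taue$ is needed, pass to the limit in the martingale relation using $\taue\to\infty$ together with the estimates \cref{eq:control_phi_1,eq:control_phi_2,eq:control_perturbed_generator} and $\alpha(b+2)<1$, and conclude via \cref{prop:limit_generator}. The only cosmetic difference is that the paper avoids Skorokhod representation and explicit uniform integrability by observing that $\Phi$ is continuous and \emph{bounded} on $C_TC^1_x$ (since $\chi\in C^3_b$ makes both $\chi$ and $\mathcal L\tilde\phi$ bounded), so weak convergence of $\zeta^{\epsilon_i}$ already gives $\esp{\Phi(\zeta^{\epsilon_i})}\to\esp{\Phi(\zeta)}$ directly; the remaining error terms $r_1,\dots,r_4$ are then handled exactly as you describe.
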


\begin{proof}
	Owing to the tightness established in \cref{prop:tightness}, there exists a sequence $\epsilon_i \xrightarrow[i \to \infty]{} 0$ and $\zeta \in C_T C^1_x$ such that $\zeta^{\epsilon_i}$ converges in distribution to $\zeta$ when $i \to \infty$. We start by proving that $\zeta$ solves the martingale problem associated with the generator $\mathcal L$. 

	Let $\phi \in \Theta$ with $\psi = 1$. Let $0 \leq s \leq s_1 \leq ... \leq s_n \leq t$, let $g$ be a continuous bounded function and for $z \in C_T C^1_x$, let $G(z) = g(z(s_1), ..., z(s_n))$ and
	$$\Phi(z) = \parths{\phi(z(t)) - \phi(z(s)) - \int_s^t \mathcal L \phi(z(u)) du}G(z).$$
	Note that $G$ and $\Phi$ are continuous and bounded on $C_T C^1_x$, so $\esp{\Phi(\zeta^{\epsilon_i})} \xrightarrow[i \to \infty]{} \esp{\Phi(\zeta)}$. Let us establish that $\esp{\Phi(\zeta^{\epsilon_i})}$ also converges to $0$.

	Let $\phi^{\epsilon_i}$ be the perturbed test function introduced in \cref{prop:corrector} associated to $\phi$. Since $\phi^{\epsilon_i}$ is a good test function, and since $G(\zeta\stoppedi)$ is $\mathcal F^{\epsilon_i}_s$-measurable, \cref{prop:good_test_function_martingale} yields
	\begin{equation*}
			\esp{\parths{\phi^{\epsilon_i}(\zeta\stoppedi(t)) - \phi^{\epsilon_i}(\zeta\stoppedi(s)) - \int_{s \wedge \tauei}^{t \wedge \tauei} \mathcal L^{\epsilon_i} \phi^{\epsilon_i}(\zeta^{\epsilon_i}(u))du} G(\zeta\stoppedi)} = 0.
	\end{equation*}
	Owing to \cref{eq:def_phi_epsilon}, this leads to
	\begin{equation*}
		\abs{\esp{\Phi(\zeta\stoppedi)}} \lesssim_g \sum_{j=1}^4 \esp{\abs{r_j}},
	\end{equation*}
	with
	\begin{gather*}
		r_1 = {\epsilon_i} (\phi_1(\zeta\stoppedi(t),\bar m\stoppedi(t)) - \phi_1(\zeta\stoppedi(s),\bar m\stoppedi(s))),\\
		r_2 = {\epsilon_i}^2 (\phi_2(\zeta\stoppedi(t),\bar m\stoppedi(t)) - \phi_2(\zeta\stoppedi(s),\bar m\stoppedi(s))),\\
		r_3 = - \int_{s \wedge \tauei}^{t \wedge \tauei} \parths{\mathcal L^{\epsilon_i} \phi^{\epsilon_i}(\zeta^{\epsilon_i}(u)) - \mathcal L \phi (\zeta^{\epsilon_i}(u))} du,\\
		r_4 = \int_{t \wedge \tauei}^{t} \mathcal L \phi(\zeta\stoppedi(u)) du - \int_{s \wedge \tauei}^{s} \mathcal L \phi(\zeta\stoppedi(u)) du.
	\end{gather*}
	Using \cref{eq:hyp_alpha}, \cref{eq:control_phi_1}, \cref{eq:control_phi_2}, \cref{eq:control_perturbed_generator}, \cref{hyp:m_L2_all_starting_point,hyp:f0}, we have for $j \in \set{1,2,3}$, $\esp{\abs{r_j}} \xrightarrow[\epsilon \to 0]{} 0$. It remains to prove that $\esp{\abs{r_4}} \to 0$. The term $r_4$ does not appear in \cite{debussche2011diffusion}, but is simple to manage since $\taue \xrightarrow[\epsilon \to 0]{} \infty$. The Cauchy-Schwarz inequality and \cref{lem:taue_to_infty} lead to
	\begin{align*}
		\esp{\abs{r_4}}^2
			&\lesssim_{\phi} \esp{\abs{t - t \wedge \tauei}^2 + \abs{s - s \wedge \tauei}^2}\\
			&\lesssim_{\phi} T^2 \proba{\tauei < T} \xrightarrow[i \to \infty]{} 0
	\end{align*}
	Thus, we get $\esp{\Phi(\zeta\stoppedi)} \xrightarrow[i \to \infty]{} 0$, hence $\esp{\Phi(\zeta)} = 0$. The same proof can be adapted when replacing $\phi$ by $\phi^2$. Therefore, the processes $M_\phi$ and $M_{\phi^2}$ defined in \cref{prop:limit_generator} are martingales. Owing to \cref{prop:limit_generator}, $\zeta$ satisfies \cref{eq:limit_equation_weak_zeta} and is a $Q$-Wiener process.

	This limit point being unique in distribution, $\zeta^\epsilon$ converges in distribution to this Wiener process.
\end{proof}

\subsection{Convergence of the stopped martingale problems} \label{subsec:convergence_martingale_problem}

In this section, we use \cref{prop:convergence_of_the_auxiliary_process} to establish the convergence of the stopped martingale problems satisfied by $X\stoppedL$. The proof is similar to the proof of \cref{prop:convergence_of_the_auxiliary_process}, but this time the stopping time persists when $\epsilon \to 0$ because of the fixed threshold $\Lambda$.

Let us introduce the path space $\ul \Omega = C_T H^{-\sigma}_x \times C_T C^1_x \times C_T C^1_x$, equipped with its Borel $\sigma$-algebra. We denote by $(\ul \rho,\ul \zeta,\ul \zeta')$ the canonical process on $\ul \Omega$ and by $\seq{\ul{\mathcal F}_t}{t \in \tSet}$ its associated filtration.

Define $\law{\epsilon,\Lambda}$ the distribution of $(\rho\stoppedL,\zeta\stoppedL,\zeta^\epsilon)$ and $\mathbb E_{\epsilon,\Lambda}$ the expectation under this distribution (on $\ul \Omega$). By \cref{prop:tightness}, the family $\seq{\law{\epsilon,\Lambda}}{\epsilon}$ is tight. Thus, in this section, we consider a sequence $\seq{\epsilon_i}{i \in \mathbb N}$ such that $\epsilon_i \to 0$ and $\law{\epsilon_i,\Lambda} \to \law{0,\Lambda}$ weakly when $i \to \infty$, for some limit point $\law{0,\Lambda}$. Note that under $\law{0,\Lambda}$, owing to \cref{prop:convergence_of_the_auxiliary_process}, $\ul \zeta'$ is a $Q$-Wiener process whose distribution $\law{Q}$ does not depend on $\Lambda$.

We now state two continuity lemmas.
\begin{lemma} \label{lem:tauL_continuity}
	For any fixed $\Lambda \in \tSet$, the mapping $\tau_\Lambda(\cdot)$ defined by \cref{eq:def_tauLze} is lower semi-continuous on $C_T C^1_x$. Moreover, it is continuous at every $z$ such that $\tau_\cdot(z)$ is continuous at $\Lambda$.
\end{lemma}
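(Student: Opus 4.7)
My plan is to split the lemma into two claims: lower semi-continuity of $\tau_\Lambda$ on all of $C_T C^1_x$, and the matching upper semi-continuity at any $z$ such that $\Lambda' \mapsto \tau_{\Lambda'}(z)$ is continuous at $\Lambda$. Both rely on one elementary observation: for $z \in C_T C^1_x$, the real-valued map $s \mapsto \norm{z(s)}_{C^1_x}$ is continuous on $[0,T]$, so the sub-level set $\{s : \norm{z(s)}_{C^1_x} < \Lambda\}$ is open and, whenever $\tau_\Lambda(z) < \infty$, the defining infimum is attained: $\norm{z(\tau_\Lambda(z))}_{C^1_x} \geq \Lambda$.

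For the lower semi-continuity, I take $z_n \to z$ in $C_T C^1_x$ and set $t^\star = \liminf_n \tau_\Lambda(z_n)$. If $t^\star = +\infty$ there is nothing to prove; otherwise I extract a subsequence along which $\tau_\Lambda(z_{n_k}) \to t^\star$, and the observation above gives $\norm{z_{n_k}(\tau_\Lambda(z_{n_k}))}_{C^1_x} \geq \Lambda$. The uniform convergence $z_{n_k} \to z$ combined with the uniform continuity of $s \mapsto \norm{z(s)}_{C^1_x}$ on $[0,T]$ lets me pass to the limit and yields $\norm{z(t^\star)}_{C^1_x} \geq \Lambda$, whence $\tau_\Lambda(z) \leq t^\star$.

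For the continuity statement, it remains to prove the upper bound $\limsup_n \tau_\Lambda(z_n) \leq \tau_\Lambda(z)$. The case $\tau_\Lambda(z) = +\infty$ is automatic from lower semi-continuity. Otherwise, I fix $\eta > 0$: since $\Lambda' \mapsto \tau_{\Lambda'}(z)$ is nondecreasing and continuous at $\Lambda$, there exists $\delta > 0$ with $\tau_{\Lambda+\delta}(z) \leq \tau_\Lambda(z) + \eta < \infty$. Writing $t_\delta = \tau_{\Lambda+\delta}(z)$, the attainment observation applied to the level $\Lambda+\delta$ gives $\norm{z(t_\delta)}_{C^1_x} \geq \Lambda+\delta$, so for $n$ large the uniform convergence ensures $\norm{z_n(t_\delta)}_{C^1_x} \geq \Lambda + \delta/2 > \Lambda$, and hence $\tau_\Lambda(z_n) \leq t_\delta \leq \tau_\Lambda(z) + \eta$. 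Letting $\eta \to 0$ closes the argument.

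The only real subtlety, and the reason the hypothesis is needed, is the following: a discontinuity of $\tau_\cdot(z)$ at $\Lambda$ corresponds precisely to the degenerate situation where $\norm{z(\cdot)}_{C^1_x}$ touches the level $\Lambda$ from below without crossing it, and in that case an arbitrarily small perturbation $z_n$ can miss the threshold altogether, making $\tau_\Lambda(z_n)$ leap far past $\tau_\Lambda(z)$. The continuity assumption excludes this scenario by supplying a slightly higher level $\Lambda + \delta$ which is genuinely reached by $z$, which is exactly what the perturbation argument needs.
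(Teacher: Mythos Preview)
Your proof is correct. The paper itself does not prove this lemma in the text; it simply refers to \cite{hofmanova2012weak} (Lemmas~3.5, 3.6 and Appendix) for the arguments, so your write-up is strictly more explicit than what the paper provides. The route you take --- attainment of the infimum by continuity of $s\mapsto\norm{z(s)}_{C^1_x}$, a compactness/subsequence argument for lower semi-continuity, and the ``bump up the level to $\Lambda+\delta$'' trick for the upper bound --- is the standard one and is essentially what one finds in the cited reference.

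Two cosmetic remarks. First, in the case $\tau_\Lambda(z)=+\infty$ you write that the upper bound is ``automatic from lower semi-continuity''; in fact $\limsup_n \tau_\Lambda(z_n)\leq +\infty$ is trivially true with no hypothesis at all, so lower semi-continuity is not what makes this case free (though invoking it does give the full two-sided limit at once). Second, when you pick $\delta>0$ with $\tau_{\Lambda+\delta}(z)\leq \tau_\Lambda(z)+\eta$, you are implicitly using right-continuity of $\Lambda'\mapsto\tau_{\Lambda'}(z)$ at $\Lambda$, which of course follows from the assumed continuity there; it might be worth saying so explicitly, since the map $\Lambda'\mapsto\tau_{\Lambda'}(z)$ is always left-continuous (being nondecreasing with closed sub-level sets), and the content of the hypothesis is precisely the right-continuity you exploit.
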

\begin{lemma} \label{lem:tauLze_as_continuity}
	The set $\set{\Lambda \geq 0 \mid \probal{Q}{\tau_\cdot(\ul \zeta') \mbox{ is not continuous at $\Lambda$}} >0}$ is at most countable. Let $\mathfrak L$ be its complementary.
\end{lemma}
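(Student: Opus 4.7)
The plan rests on two deterministic facts about the map $\Lambda \mapsto \tau_\Lambda(z)$ for each continuous path $z \in C_T C^1_x$, together with a standard monotonicity-under-expectation argument.

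First, for every $z \in C_T C^1_x$ and $\Lambda \leq \Lambda'$, the inclusion $\set{t : \norm{z(t)}_{C^1_x} \geq \Lambda'} \subseteq \set{t : \norm{z(t)}_{C^1_x} \geq \Lambda}$ makes $\Lambda \mapsto \tau_\Lambda(z)$ non-decreasing on $[0,\infty)$, with values in $[0,T] \cup \set{+\infty}$. Continuity of $t \mapsto \norm{z(t)}_{C^1_x}$ moreover forces $\tau_\cdot(z)$ to be left-continuous: for any $\eta \in (0,\tau_\Lambda(z))$, the compact set $[0,\tau_\Lambda(z) - \eta]$ carries a maximum value $M_\eta < \Lambda$ of the norm, so $\tau_{\Lambda'}(z) > \tau_\Lambda(z) - \eta$ for every $\Lambda' \in (M_\eta,\Lambda)$ (the cases $\tau_\Lambda(z) \in \set{0,+\infty}$ being trivial). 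Consequently, every discontinuity of $\tau_\cdot(z)$ is a right-jump $\tau_{\Lambda+}(z) > \tau_\Lambda(z)$.

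I then introduce the non-decreasing bounded function
\begin{equation*}
F(\Lambda) \doteq \espl{Q}{\tau_\Lambda(\ul\zeta') \wedge (T+1)}, \qquad \Lambda \geq 0,
\end{equation*}
whose set $D$ of discontinuities is at most countable. For $\Lambda \notin D$, monotone convergence combined with the pointwise bound $\tau_{\Lambda+}(\ul\zeta') \geq \tau_\Lambda(\ul\zeta')$ forces $\tau_{\Lambda+}(\ul\zeta') \wedge (T+1) = \tau_\Lambda(\ul\zeta') \wedge (T+1)$ $\law Q$-a.s. Distinguishing whether both sides are $<T+1$ (giving $\tau_{\Lambda+}(\ul\zeta') = \tau_\Lambda(\ul\zeta')$) or both equal $T+1$ (giving $\tau_\Lambda(\ul\zeta') = \tau_{\Lambda+}(\ul\zeta') = +\infty$) rules out any right-jump $\law Q$-a.s.; combined with the deterministic left-continuity above, $\tau_\cdot(\ul\zeta')$ is $\law Q$-a.s. continuous at $\Lambda$. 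The ``bad'' set is therefore contained in $D$, hence at most countable.

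The only mild obstacle is dealing with paths that never reach level $\Lambda$ on $[0,T]$, which is addressed by the truncation at $T+1$; no property of $\law Q$ beyond continuity of its paths is used.
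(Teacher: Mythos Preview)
Your argument is correct. The monotonicity and left-continuity of $\Lambda \mapsto \tau_\Lambda(z)$ for continuous $z$ are verified as you describe, and the truncation at $T+1$ cleanly separates the two possible regimes $\tau_\Lambda \in [0,T]$ versus $\tau_\Lambda = +\infty$, so that equality of the truncated expectations together with the pointwise inequality indeed forces $\tau_{\Lambda+}(\ul\zeta') = \tau_\Lambda(\ul\zeta')$ $\law Q$-almost surely.

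As for comparison: the paper does not give its own proof here but simply refers to \cite{hofmanova2012weak} (Lemmas~3.5--3.6 and Appendix). The argument there is essentially the same monotonicity-plus-countable-discontinuity idea you implement; your write-up has the virtue of being self-contained and of making explicit that nothing about $\law Q$ is used beyond path continuity.
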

We refer to \cite{hofmanova2012weak} (Lemma 3.5, 3.6 and Appendix) for the proofs of \Cref{lem:tauL_continuity,lem:tauLze_as_continuity}. These results can be applied here since $\norm{\ul \zeta'}_{C^1_x}$ is a continuous finite dimensional process and its distribution $\law{Q}$ under $\law{0,\Lambda}$ does not depend on $\Lambda$.

Owing to \cref{lem:tauLze_as_continuity}, there exist arbitrarily large numbers $\Lambda \in \mathfrak L$ and for all $\Lambda \in \mathfrak L$, $\tau_\cdot(\ul \zeta')$ is $\law{0,\Lambda}$-almost surely continuous at $\Lambda$ and by \cref{lem:tauL_continuity}, $\tau_\Lambda(\cdot)$ is $\law{0,\Lambda}$-a.s. continuous at $\ul \zeta'$. From now on, it is assumed that $\Lambda \in \mathfrak L$.

\begin{proposition} \label{prop:limit_martingale}
	Let $\Lambda \in \mathfrak L$. For all $\phi \in \Theta$, the process
	\begin{equation*}
		t \mapsto \phi(\ul \rho(t),\ul \zeta(t)) - \phi(\ul \rho(0),\ul \zeta(0)) - \int_0^{t \wedge \tau_\Lambda(\ul \zeta')} \mathcal L \phi(\ul \rho(u),\ul \zeta(u)) du
	\end{equation*}
	is a $\seq{\ul{\mathcal F}_t}{t \in \tSet}$-martingale under $\law{0,\Lambda}$.
\end{proposition}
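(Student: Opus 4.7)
The approach is to apply the perturbed test function method to obtain pre-limit stopped martingales for each $\epsilon_i$ and then pass to the weak limit along the chosen subsequence $\law{\epsilon_i,\Lambda}\to\law{0,\Lambda}$. Given $\phi\in\Theta$, let $\phi^{\epsilon_i}=\phi+\epsilon_i\phi_1+\epsilon_i^2\phi_2$ be the perturbed test function provided by \cref{prop:corrector}. Since $\phi^{\epsilon_i}$ is a good test function, \cref{prop:good_test_function_martingale} asserts that $M^{\epsilon_i,\tauLei}_{\phi^{\epsilon_i}}$ is a martingale. Hence for $0\leq s\leq t\leq T$ and any bounded continuous cylindrical functional $G$ depending only on the values of $(\rho\stoppedLi,\zeta\stoppedLi,\zeta^{\epsilon_i})$ at finitely many times $\leq s$,
\begin{equation*}
	\esp{\parths{\phi^{\epsilon_i}(X\stoppedLi(t))-\phi^{\epsilon_i}(X\stoppedLi(s))-\int_{s\wedge\tauLei}^{t\wedge\tauLei}\mathcal L^{\epsilon_i}\phi^{\epsilon_i}(X^{\epsilon_i}(u))du}G}=0.
\end{equation*}

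The second step is to replace $\phi^{\epsilon_i}$ by $\phi$ and $\mathcal L^{\epsilon_i}\phi^{\epsilon_i}$ by $\mathcal L\phi$ in the above identity. Combining the corrector estimates \cref{eq:control_phi_1,eq:control_phi_2,eq:control_perturbed_generator}, the bound \cref{eq:taue_bounds_m}, the constraint \cref{eq:hyp_alpha}, \cref{prop:L2_bound}, and the moment assumptions \cref{hyp:m_L2_all_starting_point,hyp:f0}, all error terms generated by these substitutions vanish in $L^1$ as $\epsilon_i\to 0$; this is essentially the same computation as in the tightness proof of \cref{subsec:Aldous}. Moreover, since $\tauei\to\infty$ in probability by \cref{lem:taue_to_infty}, the stopping time $\tauLei=\tauei\wedge\tau_\Lambda(\zeta^{\epsilon_i})$ can be replaced by $\tau_\Lambda(\zeta^{\epsilon_i})$ up to an error bounded in $L^1$ by a constant times $\proba{\tauei<T}^{1/2}$ via Cauchy-Schwarz and the uniform-in-$\epsilon_i$ $L^2$-bound on $\mathcal L\phi(\rho\stoppedLi,\zeta\stoppedLi)$. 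Translated to the canonical space, the identity above becomes
\begin{equation*}
	\mathbb E_{\epsilon_i,\Lambda}\parths{\parths{\phi(\ul\rho(t),\ul\zeta(t))-\phi(\ul\rho(s),\ul\zeta(s))-\int_{s\wedge\tau_\Lambda(\ul\zeta')}^{t\wedge\tau_\Lambda(\ul\zeta')}\mathcal L\phi(\ul\rho(u),\ul\zeta(u))du}G}=o(1).
\end{equation*}

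The final step is to pass to the weak limit in the expectation. The integrand is a functional $H(\ul\rho,\ul\zeta,\ul\zeta')$ that I claim is $\law{0,\Lambda}$-a.s.\ continuous on $\ul\Omega$. Continuity of $\phi$ on $H^{-\sigma}_x\times C^1_x$ follows from $\phi\in\Theta$ and $\xi\in H^3_x$, and inspection of the explicit formula \cref{eq:def_L_phi} (using $\chi\in C^3_b(C^1_x)$ and that the $R_0$-terms act only on the $n$-variable over which one has already integrated) yields continuity of $\mathcal L\phi$. The delicate point is that the map $z\mapsto\tau_\Lambda(z)$ is only lower semi-continuous in general; this is where the restriction $\Lambda\in\mathfrak L$ is crucial. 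By \cref{lem:tauL_continuity,lem:tauLze_as_continuity} and the fact that $\ul\zeta'$ is a $Q$-Wiener process under $\law{0,\Lambda}$ (by \cref{prop:convergence_of_the_auxiliary_process}), $\tau_\Lambda(\cdot)$ is continuous at $\ul\zeta'$ for $\law{0,\Lambda}$-almost every path, and hence so is $H$. Uniform integrability of $H$ along the sequence follows because $\phi$ and $\mathcal L\phi$ have at most quadratic growth in $\norm{\rho}_{L^2_x}$, and \cref{prop:L2_bound} combined with \cref{hyp:f0} bounds $\norm{\ul\rho}_{L^2_x}$ uniformly in $L^p$ for $p$ large enough. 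The continuous mapping and portmanteau theorems then yield the same identity with $\mathbb E_{0,\Lambda}$ in place of $\mathbb E_{\epsilon_i,\Lambda}$. A standard monotone class argument extends the identity from cylindrical $G$ to all bounded $\ul{\mathcal F}_s$-measurable $G$, which is exactly the desired martingale property.

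The main obstacle is precisely this last step: in the presence of a stopping time inside the integral, one is forced to invoke an almost-everywhere (rather than everywhere) continuity argument. Without the restriction $\Lambda\in\mathfrak L$ the functional $H$ could be discontinuous on a $\law{0,\Lambda}$-positive set (paths that graze the level $\Lambda$), and weak convergence would not transfer the identity. \Cref{lem:tauLze_as_continuity} sidesteps this by showing the set of bad thresholds is at most countable, so arbitrarily large $\Lambda$ may be chosen in $\mathfrak L$, which is exactly what is needed downstream for the identification argument of \cref{subsec:identification_limit_point}.
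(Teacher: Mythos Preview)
Your proposal is correct and follows essentially the same route as the paper: use the stopped martingale for the perturbed test function $\phi^{\epsilon_i}$, replace $\phi^{\epsilon_i}$ by $\phi$ and $\mathcal L^{\epsilon_i}\phi^{\epsilon_i}$ by $\mathcal L\phi$ via the corrector estimates, replace $\tauLei$ by $\tau_\Lambda(\zeta^{\epsilon_i})$ using $\tauei\to\infty$, and then pass to the limit with $\law{0,\Lambda}$-a.s.\ continuity of $\tau_\Lambda(\cdot)$ (this is exactly where $\Lambda\in\mathfrak L$ enters) together with uniform integrability from \cref{prop:L2_bound} and \cref{hyp:f0}. The paper organizes these steps as two simultaneous computations of $\mathbb E_{\epsilon_i,\Lambda}[\Phi]$ (one showing it tends to $\mathbb E_{0,\Lambda}[\Phi]$, the other to $0$) with the four error terms $r_1,\dots,r_4$, but the content is the same as what you outline.
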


\begin{proof}
	Let $\phi \in \Theta$. As for \cref{prop:convergence_of_the_auxiliary_process}, let $0 \leq s \leq s_1 \leq ... \leq s_n \leq t$, let $g$ be a continuous bounded function, and let
	\begin{equation*}
		G(\ul \rho,\ul \zeta,\ul \zeta') = g(\ul \rho(s_1), \ul \zeta(s_1), \ul \zeta'(s_1), ..., \ul \rho(s_n), \ul \zeta(s_n), \ul \zeta'(s_n)),
	\end{equation*}
	and
	\begin{equation*}
		\Phi(\ul \rho,\ul \zeta,\ul \zeta') = \parths{\phi(\ul \rho(t),\ul \zeta(t)) - \phi(\ul \rho(s),\ul \zeta(s)) - \int_{s \wedge \tau_\Lambda(\ul \zeta')}^{t \wedge \tau_\Lambda(\ul \zeta')} \mathcal L \phi(\ul \rho(u),\ul \zeta(u)) du}G(\ul \rho,\ul \zeta,\ul \zeta').
	\end{equation*}
	As for \cref{prop:convergence_of_the_auxiliary_process}, we establish that $\espl{\epsilon_i,\Lambda}{\Phi(\ul\rho,\ul\zeta,\ul\zeta')}$ converges, when $i \to \infty$, to both $\espl{0,\Lambda}{\Phi(\ul\rho,\ul\zeta,\ul\zeta')}$ and $0$.

	On the one hand, since $\Phi$ is continuous $\law{0,\Lambda}$-almost everywhere, $\law{\epsilon_i,\Lambda} \circ \Phi^{-1} \to \law{0,\Lambda} \circ \Phi^{-1}$ weakly when $i \to \infty$ (see \cite{bourbaki2004integrationII} Proposition IX.5.7). Moreover, $\seq{\law{\epsilon_i,\Lambda} \circ \Phi^{-1}}{\epsilon_i}$ is uniformly integrable. Indeed, using \cref{eq:def_L_phi}, we have
	\begin{equation*}
		\sup_\epsilon \espl{\epsilon,\Lambda}{\abs{\Phi(\ul\rho,\ul\zeta,\ul\zeta')}^2} \lesssim_{T,\Lambda,\phi,g} \sup_\epsilon \esp{1 + \norm{f^\epsilon_0}_\fSet^4} < \infty.
	\end{equation*}
	Uniform integrability and convergence in distribution yield (see \cite{billingsley1999convergence}, Theorem 5.4)
	\begin{equation*}
		\espl{\epsilon_i,\Lambda}{\Phi(\ul\rho,\ul\zeta,\ul\zeta')} \xrightarrow[i \to \infty]{} \espl{0,\Lambda}{\Phi(\ul\rho,\ul\zeta,\ul\zeta')}.
	\end{equation*}

	On the other hand, define the perturbed test function $\phi^{\epsilon_i}$ as in \cref{prop:corrector}. As for \cref{prop:convergence_of_the_auxiliary_process}, we have
	\begin{multline*}
			\esp{\parths{\phi^{\epsilon_i}(X\stoppedLi(t)) - \phi^{\epsilon_i}(X\stoppedLi(s)) - \int_{s \wedge \tauLei}^{t \wedge \tauLei} \mathcal L^{\epsilon_i} \phi^{\epsilon_i}(X^{\epsilon_i}(u))du} \right. \\
			\left. \vphantom{\parths{\int_{s \wedge \tauLei}^{t \wedge \tauLei} \mathcal L^{\epsilon_i} \phi^{\epsilon_i}(X^{\epsilon_i}(u))du}} G(\rho\stoppedLi,\zeta\stoppedLi,\zeta\stoppedi)} = 0,
	\end{multline*}
	and
	\begin{equation*}
		\abs{\espl{{\epsilon_i},\Lambda}{\Phi(\ul\rho,\ul\zeta,\ul\zeta')}} = \abs{\esp{\Phi(\rho\stoppedLi,\zeta\stoppedLi,\zeta^{\epsilon_i})}} \lesssim_g \sum_{j=1}^4 \esp{\abs{r_j}},
	\end{equation*}
	with
	\begin{gather*}
		r_1 = {\epsilon_i} (\phi_1(X\stoppedLi(t)) - \phi_1(X\stoppedLi(s))) \to 0\\
		r_2 = {\epsilon_i}^2 (\phi_2(X\stoppedLi(t)) - \phi_2(X\stoppedLi(s))) \to 0\\
		r_3 = - \int_{s \wedge \tauLei}^{t \wedge \tauLei} \parths{\mathcal L^{\epsilon_i} \phi^{\epsilon_i}(X\stoppedLi(u)) - \mathcal L \phi (\bar X\stoppedLi(u))} du \to 0\\
		r_4 = \int_{t \wedge \tauLei}^{t \wedge \tauLzei} \mathcal L \phi(\bar X\stoppedi(u)) du - \int_{s \wedge \tauLei}^{s \wedge \tauLzei} \mathcal L \phi(\bar X\stoppedi(u)) du.
	\end{gather*}
	For the last term $r_4$, we have
	\begin{align*}
		\esp{\abs{r_4}}^2
			&\lesssim_{\phi,\Lambda} \esp{\abs{t \wedge \tauLzei - t \wedge \tauLei}^2 + \abs{s \wedge \tauLzei - s \wedge \tauLei}^2}\\
			&\lesssim_{\phi,\Lambda} T^2 \proba{\tauei < T \wedge \tau_\Lambda(\zeta^{\epsilon_i})} \mbox{ using \cref{eq:def_tauLe}}\\
			&\lesssim_{\phi,\Lambda} T^2 \proba{\tauei < T} \xrightarrow[i \to \infty]{} 0.
	\end{align*}
	Thus, we get $\espl{\epsilon_i,\Lambda}{\Phi(\ul\rho,\ul\zeta,\ul\zeta')} \xrightarrow[i \to \infty]{} 0$, which concludes the proof of \cref{prop:limit_martingale}.
\end{proof}

\subsection{Identification of the limit point} \label{subsec:identification_limit_point}

In \cref{subsec:convergence_of_the_auxiliary_process}, solving the martingale problem is sufficient to characterize the distribution of the Markov process as a solution of a limit equation, under a uniqueness condition. However, the limit point $\law{0,\Lambda}$ solves a martingale problem only until a stopping time $\tau_\Lambda(\ul \zeta')$. The goal of this section is to explain how to identify $\law{0,\Lambda}$ using this stopped martingale problem.

Let us come back to the space $\Omega$ to state more precise results. Recall that the distribution of $(\rho\stoppedL,\zeta\stoppedL,\zeta^\epsilon)$ is $\law{\epsilon,\Lambda}$, and define $(\rho_\Lambda,\zeta_\Lambda,\zeta')$ following the limit distribution $\law{0,\Lambda}$ (we assume $\Omega$ is large enough to define such a process). Recall that $\bar X\stoppedL \doteq (\rho\stoppedL,\zeta\stoppedL)$. Define $\bar X_\Lambda \doteq (\rho_\Lambda,\zeta_\Lambda)$ and $\bar X$ a solution of \cref{eq:limit_equation}.

In this section, we construct a process $Y_\Lambda$ that extends $\bar X_\Lambda$ after the stopping time $\tau_\Lambda(\zeta')$ (in distribution) and that solves the martingale problem associated to $\mathcal L$. It is similar to the proof of Theorem 6.1.2 in \cite{stroock2006multidimensional}, but we adapt this proof to see precisely how $\tau_\Lambda(\zeta')$ is linked to the extended process.

\paragraph{Extension after a stopping time}

We first need a result to assert that $\tau_\Lambda(\zeta')$ is a hitting time for $\bar X_\Lambda$. Note that until here, we did not use $\zeta^\epsilon$  when considering the coupled process $(\rho^\epsilon,\zeta^\epsilon)$.
But had we considered $\rho^\epsilon$ alone, the stopping time $\tau_\Lambda(\zeta')$ would not be a hitting time for $\rho_\Lambda$ (as a matter of fact, $\tau_\Lambda(\zeta')$ is not even a stopping time for the filtration generated by $\rho_\Lambda$).
\begin{lemma} \label{lem:zeta_lambda_rho_lambda}
	Let $\Lambda \in \mathfrak L$.

	The processes $\zeta_\Lambda$ and $(\zeta')^{\tau_\Lambda(\zeta')}$ are indistinguishable. In particular, $\tau_\Lambda(\zeta_\Lambda) = \tau_\Lambda(\zeta')$. Moreover, the processes $\rho_\Lambda$ and $\rho_\Lambda^{\tau_\Lambda(\zeta')}$ are indistinguishable.
\end{lemma}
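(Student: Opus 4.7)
The plan is to reduce both indistinguishability claims to a continuous mapping argument on the path space $\ul\Omega$. I introduce two functionals
\begin{equation*}
	F_1(\ul\rho,\ul\zeta,\ul\zeta') \doteq \sup_{t\in[0,T]}\norm{\ul\rho(t)-\ul\rho(t\wedge\tau_\Lambda(\ul\zeta'))}_{H^{-\sigma}_x},
\end{equation*}
\begin{equation*}
	F_2(\ul\rho,\ul\zeta,\ul\zeta') \doteq \sup_{t\in[0,T]}\norm{\ul\zeta(t)-\ul\zeta'(t\wedge\tau_\Lambda(\ul\zeta'))}_{C^1_x},
\end{equation*}
and show that both are $\law{0,\Lambda}$-a.s. equal to $0$.

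Under $\law{\epsilon,\Lambda}$, the functional $F_1$ vanishes \emph{identically}: since $\tauLe=\taue\wedge\tauLze\le\tauLze$, stopping $\rho\stoppedL$ once more at $\tauLze$ adds nothing, so $\rho\stoppedL(t\wedge\tauLze)=\rho\stoppedL(t)$. The functional $F_2$ vanishes \emph{on the event} $\set{\taue\ge T}$: there, $t\wedge\tauLe=t\wedge\tauLze$ for every $t\in[0,T]$, hence $\zeta\stoppedL(t)=\zeta^\epsilon(t\wedge\tauLze)$; by \cref{lem:taue_to_infty} this event has probability tending to $1$.

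Next I argue that $F_1$ and $F_2$ are continuous at every triple whose $\ul\zeta'$-component is a continuity point of $\tau_\Lambda(\cdot)$. This is where \cref{lem:tauL_continuity,lem:tauLze_as_continuity} enter: if $\ul\zeta'_n\to\ul\zeta'$ in $C_TC^1_x$ and $\tau_\cdot(\ul\zeta')$ is continuous at $\Lambda$, then $\tau_\Lambda(\ul\zeta'_n)\to\tau_\Lambda(\ul\zeta')$; combined with the uniform convergence of $\ul\rho_n,\ul\zeta_n,\ul\zeta'_n$ and the modulus of continuity of the limit paths (elements of $C_TH^{-\sigma}_x$ and $C_TC^1_x$), this yields uniform-in-$t$ convergence of $\ul\rho_n(\cdot\wedge\tau_\Lambda(\ul\zeta'_n))$ and $\ul\zeta'_n(\cdot\wedge\tau_\Lambda(\ul\zeta'_n))$. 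Since $\Lambda\in\mathfrak L$ and the marginal of $\ul\zeta'$ under $\law{0,\Lambda}$ is $\law{Q}$, the continuity set of $F_1$ and $F_2$ has full $\law{0,\Lambda}$-measure. Applying the continuous mapping theorem along the converging subsequence $\law{\epsilon_i,\Lambda}\to\law{0,\Lambda}$ gives $F_j\circ\pi \to F_j\circ\pi$ in distribution; the pre-limit distributions concentrate on $\set{0}$, so $\law{0,\Lambda}(F_1=0)=\law{0,\Lambda}(F_2=0)=1$. This is exactly the two indistinguishability statements.

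The assertion $\tau_\Lambda(\zeta_\Lambda)=\tau_\Lambda(\zeta')$ is an immediate pathwise consequence: having identified $\zeta_\Lambda=(\zeta')^{\tau_\Lambda(\zeta')}$ almost surely, continuity of $\zeta'$ ensures that on $\set{\tau_\Lambda(\zeta')\le T}$ one has $\norm{\zeta'(\tau_\Lambda(\zeta'))}_{C^1_x}=\Lambda$, so the stopped path hits level $\Lambda$ at the same time as $\zeta'$; on the complement both stopping times exceed $T$. The main obstacle in this argument is the third step, i.e.\ establishing joint continuity of $F_1,F_2$ on a set of full limit measure; this is precisely what forces the use of $\Lambda\in\mathfrak L$ and of the coupled convergence of the triple $(\rho\stoppedL,\zeta\stoppedL,\zeta^\epsilon)$ rather than the pair $(\rho\stoppedL,\zeta\stoppedL)$ alone, as keeping $\zeta^\epsilon$ unstopped in the limit is what allows $\tau_\Lambda(\zeta')$ to be a well-defined stopping time at the limit level.
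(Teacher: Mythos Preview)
Your proof is correct and follows essentially the same continuous-mapping argument as the paper. The only cosmetic difference is that the paper first applies Slutsky's Lemma to pass to the augmented triple $(\zeta\stoppedLi,\zeta\stoppedi,\zeta^{\epsilon_i})\to(\zeta_\Lambda,\zeta',\zeta')$ so that the functional $\Phi(z_1,z_2,z_3)=\norm{z_1-z_2^{\tau_\Lambda(z_3)}}_{C_TC^1_x}$ vanishes \emph{identically} on the pre-limits, whereas you work directly with the canonical triple and use that your $F_2$ vanishes on the high-probability event $\set{\taue\ge T}$; both routes yield the same conclusion.
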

This result was expected, given the construction of the stopping times and the fact that $\zeta_\Lambda$ and $\zeta'$ are the limit of the same process, respectively with and without a stopping time. The choice $\Lambda \in \mathfrak L$ is here necessary to retrieve this result by taking the limit $\epsilon \to 0$.
\begin{proof}
	Since $\tauei \to \infty$ in probability by \cref{lem:taue_to_infty}, Slutsky's Lemma yields the following convergence in distribution
	\begin{equation*}
	(\zeta\stoppedLi,\zeta\stoppedi,\zeta^{\epsilon_i}) \xrightarrow[i \to \infty]{} (\zeta_\Lambda, \zeta', \zeta')
	\end{equation*}

	Now, for $z_1,z_2,z_3 \in C_T C^1_x$, let
	$$\Phi(z_1,z_2,z_3) = \norm{z_1 - z_2^{\tau_\Lambda(z_3)}}_{C_T C^1_x}.$$
	Owing to \cref{lem:tauL_continuity}, the mapping $\Phi$ is almost surely continuous at $(\zeta_\Lambda, \zeta', \zeta')$. Thus $\Phi(\zeta\stoppedLi, \zeta\stoppedi, \zeta^{\epsilon_i}) = 0$ converges in distribution to $\Phi(\zeta_\Lambda, \zeta', \zeta')$. Hence, we have almost surely $\zeta_\Lambda = (\zeta')^{\tau_\Lambda(\zeta')}$.

	The proof for $\rho_\Lambda$ uses similar arguments with $\Phi(\rho,z) = \norm{\rho - \rho^{\tau_\Lambda(z)}}_{C_T H^{-\sigma}_x}$.
\end{proof}

From now on, for any process $Y = (\rho,\zeta)$, we write $\tau_\Lambda(Y) = \tau_\Lambda(\zeta)$ so that $\tau_\Lambda(\bar X_\Lambda) = \tau_\Lambda(\zeta') \in [0,\infty]$. We shorten the notation to $\tau_\Lambda \doteq \tau_\Lambda(\bar X_\Lambda)$. Introduce the measurable function $S_\Lambda$ that stops a process at the level $\Lambda$, namely $S_\Lambda(Y) = Y^{\tau_\Lambda(Y)}$. Owing to \cref{lem:zeta_lambda_rho_lambda}, we have $S_\Lambda(\bar X_\Lambda) = \bar X_\Lambda$.

This section is devoted to \emph{extend} $\bar X_\Lambda$ after $\tau_\Lambda$ into a solution of the martingale problem associated to $\mathcal L$. Namely, we define a process $Y_\Lambda$ such that $S_\Lambda(Y_\Lambda) \stackrel{d}{=} \bar X_\Lambda$ and such that $Y_\Lambda$ solves the aforementioned martingale problem.

Fix $\omega' \in \Omega$. Define the process $\bar X_{\Lambda,\omega'}$ as follows:
\begin{itemize}
	\item $\forall \omega \in \Omega, \forall t \leq \tau_\Lambda\parths{\omega'}, \bar X_{\Lambda,\omega'}(t)\parths{\omega} = \bar X_\Lambda(t)\parths{\omega'}$. Note that $\tau_\Lambda(\bar X_{\Lambda,\omega'}) = \tau_\Lambda\parths{\omega'}$ almost surely. In particular, the distribution of $S_\Lambda(\bar X_{\Lambda,\omega'})$ is the Dirac distribution at $\bar X_\Lambda\parths{\omega'}$.
	\item On $[\tau_\Lambda\parths{\omega'},T]$ (this interval can be empty), $\bar X_{\Lambda,\omega'}\parths{\omega}$ is the solution of \cref{eq:limit_equation} starting at time $\tau_\Lambda\parths{\omega'}$ from the initial state $\bar X_\Lambda(\tau_\Lambda\parths{\omega'})\parths{\omega'}$.
\end{itemize}
It is straightforward to check that
\begin{equation*}
	\omega' \mapsto \proba{\bar X_{\Lambda,\omega'} \in \mathcal C}
\end{equation*}
is measurable for $\mathcal C = \set{Y \in C_T H^{-\sigma}_x \times C_T C^1_x \mid Y(t_1) \in \Gamma_1, ..., Y(t_n) \in \Gamma_n}$ with $0 \leq t_1 < ... < t_n \leq T$ and $\Gamma_i$ measurable. Since those sets generate the Borel $\sigma$-algebra of $C_T H^{-\sigma}_x \times C_T C^1_x$, and since a pointwise limit of measurable functions is measurable, we can take the limit when the subdivision become thiner to get that the mapping is still measurable for any measurable $\mathcal C$. Thus, we can define a mapping $\mathcal C \mapsto \mathbb E' \proba{\bar X_{\Lambda,\omega'} \in \mathcal C}$, where $\mathbb E'$ denotes the integration with respect to $\omega'$. It is also straightforward to check that this mapping is a probability measure, thus we can define on $\Omega$ a process $Y_\Lambda$ following this distribution, namely
\begin{equation*}
	\proba{Y_\Lambda \in \mathcal C} = \mathbb E' \proba{\bar X_{\Lambda,\omega'} \in \mathcal C}.
\end{equation*}
In particular, since $S_\Lambda^{-1}(\mathcal C)$ is a measurable set, we have
\begin{align*}
	\proba{S_\Lambda(Y_\Lambda) \in \mathcal C}
		&= \mathbb E' \proba{S_\Lambda(\bar X_{\Lambda,\omega'}) \in \mathcal C}\\
		&= \mathbb E' \ind{\bar X_{\Lambda}\parths{\omega'} \in \mathcal C}\\
		&= \mathbb P \parths{\bar X_{\Lambda} \in \mathcal C},
\end{align*}
hence $Y_\Lambda$ extends $\bar X_\Lambda$ as announced beforehand, in the sense that $S_\Lambda(Y_\Lambda) \stackrel{d}{=} \bar X_\Lambda$. Moreover, for any measurable function $\Phi$ such that $\mathbb E' \esp{\abs{\Phi(\bar X_{\Lambda,\omega'})}} < \infty$, we have
\begin{equation} \label{eq:law_Y}
	\esp{\Phi(Y_\Lambda)} = \mathbb E' \esp{\Phi(\bar X_{\Lambda,\omega'})}.
\end{equation}


\paragraph{Identification of the extended process}

It remains to prove that $Y_\Lambda$ solves the martingale problem associated to $\mathcal L$.

For $\phi \in \Theta$, and a process $Y \in C_T H^{-\sigma}_x \times C_T C^1_x$, define the process
\begin{equation*}
	M^{Y}(t) = \phi(Y(t)) - \phi(Y(0)) - \int_0^t \mathcal L \phi(Y(u))du.
\end{equation*}
Let $0 \leq s_1 \leq ... \leq s_n \leq s < t$ and $g$ be a bounded measurable function. Let $G : Y \mapsto g(Y(s_1),...,Y(s_n))$.

Owing to \cref{prop:limit_generator}, for almost all $\omega' \in \Omega$, the process
\begin{equation*}
	N_{\Lambda,\omega'}(t) = M^{\bar X_{\Lambda,\omega'}}(t) - M^{\bar X_{\Lambda,\omega'}}(t \wedge \tau_\Lambda\parths{\omega'})
\end{equation*}
satisfies the martingale property
\begin{equation*}
	\esp{N_{\Lambda,\omega'}(t) G(\bar X_{\Lambda,\omega'})} = \esp{N_{\Lambda,\omega'}(s) G(\bar X_{\Lambda,\omega'})}.
\end{equation*}
Indeed, for $t \in [0,\tau\parths{\omega'}]$, $N_{\Lambda,\omega'}(t) = 0$ and after the time $\tau\parths{\omega'}$, this process solves the martingale problem starting at time $\tau\parths{\omega'}$ by construction. Using \cref{eq:moment_limit_solution} and that $\phi$ and $\mathcal L \phi$ have at most quadratic growth, it is straightforward to establish
$$\mathbb E' \esp{\abs{N_{\Lambda,\omega'}(t)  G(\bar X_{\Lambda,\omega'})}} < \infty.$$
Thus, \cref{eq:law_Y} and the identity above yield
\begin{equation*}
	\esp{(M^{Y_\Lambda}(t) - M^{Y_\Lambda}(t \wedge \tau_\Lambda(Y_\Lambda))) G(Y_\Lambda)} = \esp{(M^{Y_\Lambda}(s) - M^{Y_\Lambda}(s \wedge \tau_\Lambda(Y_\Lambda))) G(Y_\Lambda)},
\end{equation*}
which can be rewritten as
\begin{multline} \label{eq:mart_temp}
	\esp{M^{Y_\Lambda}(t) G(Y_\Lambda)} =\\
	\esp{M^{Y_\Lambda}(s) \ind{\tau_\Lambda(Y_\Lambda) \leq s} G(Y_\Lambda)} + \esp{M^{Y_\Lambda}(t \wedge \tau_\Lambda(Y_\Lambda)) \ind{\tau_\Lambda(Y_\Lambda) > s} G(Y_\Lambda)}.
\end{multline}
Using that the process $Y_\Lambda$ and $S_\Lambda(Y_\Lambda)$ are equal until the time $\tau_\Lambda(Y_\Lambda) = \tau_\Lambda(S_\Lambda(Y_\Lambda))$, and that $S_\Lambda(Y_\Lambda)$ and $\bar X_\Lambda$ are equal in distribution, we get for the second term
\begin{multline*}
	\esp{M^{Y_\Lambda}(t \wedge \tau_\Lambda(Y_\Lambda)) \ind{\tau_\Lambda(Y_\Lambda) > s} G(Y_\Lambda)}\\
	\begin{split}
		&= \esp{M^{S_\Lambda(Y_\Lambda)}(t \wedge \tau_\Lambda(S_\Lambda(Y_\Lambda))) \ind{\tau_\Lambda(S_\Lambda(Y_\Lambda)) > s} G(S_\Lambda(Y_\Lambda))}\\
		&= \esp{M^{\bar X_\Lambda}(t \wedge \tau_\Lambda) \ind{\tau_\Lambda > s} G(\bar X_\Lambda)}.
	\end{split}
\end{multline*}
Owing to \cref{prop:limit_martingale}, $t \mapsto M^{\bar X_\Lambda}(t \wedge \tau_\Lambda)$ is a martingale for the filtration $\mathcal F^{\bar X_\Lambda}$ generated by $\bar X_\Lambda$. Moreover $\ind{\tau_\Lambda > s} G(\bar X_\Lambda)$ is $\mathcal F^{\bar X_\Lambda}_s$-measurable, hence the martingale property yields
\begin{equation*}
	\esp{M^{\bar X_\Lambda}(t \wedge \tau_\Lambda) \ind{\tau_\Lambda > s} G(\bar X_\Lambda)} = \esp{M^{\bar X_\Lambda}(s) \ind{\tau_\Lambda > s} G(\bar X_\Lambda)}.
\end{equation*}
Using again that $S_\Lambda(Y_\Lambda) \stackrel{d}{=} \bar X_\Lambda$, we get
\begin{equation*}
	\esp{M^{\bar X_\Lambda}(s) \ind{\tau_\Lambda > s} G(\bar X_\Lambda)} = \esp{M^{Y_\Lambda}(s) \ind{\tau_\Lambda(Y_\Lambda) > s} G(Y_\Lambda)}.
\end{equation*}
Finally, owing to \cref{eq:mart_temp}, we have
\begin{align*}
	\esp{M^{Y_\Lambda}(t) G(Y_\Lambda)}
		&= \esp{M^{Y_\Lambda}(s) \ind{\tau_\Lambda(Y_\Lambda) \leq s} G(Y_\Lambda)} + \esp{M^{Y_\Lambda}(s) \ind{\tau_\Lambda(Y_\Lambda) > s} G(Y_\Lambda)}\\
		&= \esp{M^{Y_\Lambda}(s) G(Y_\Lambda)},
\end{align*}
which proves that $Y_\Lambda$ solves the martingale associated to $\mathcal L$. Owing to \cref{prop:limit_generator}, it solves \cref{eq:limit_equation} and since the solution is unique $Y_\Lambda \stackrel{d}{=} \bar X$ the solution of \cref{eq:limit_equation}. Therefore, the limit point is unique (and does not depend on $\Lambda$). This concludes the proof that $X\stoppedL$ converges in distribution to $\bar X$.


\subsection{Convergence of the unstopped process} \label{subsec:convergence}

This section is devoted to the proof that the process $\bar X^\epsilon \doteq (\rho^\epsilon,\zeta^\epsilon)$ converges in distribution to $\bar X \doteq (\rho,\zeta)$ solution of \cref{eq:limit_equation}, in $C_T H^{-\sigma}_x \times C_T C^1_x$.

Let $\Phi$ be a continuous bounded mapping from $C_T H^{-\sigma}_x \times C_T C^1_x$ to $\mathbb R$. There exists a sequence $\epsilon_i$ such that $\epsilon_i \to 0$ when $i \to \infty$ and
\begin{gather*}
	\limsup_{\epsilon \to 0} \abs{\esp{\Phi(\bar X^{\epsilon})} - \esp{\Phi(\bar X)}} = \lim_{i \to \infty} \abs{\esp{\Phi(\bar X^{\epsilon_i})} - \esp{\Phi(\bar X)}}.
\end{gather*}
Let $\Lambda \in \mathfrak L$. Owing to \cref{prop:tightness}, up to the extraction of another subsequence, we can assume that $(\bar X\stoppedLi,\zeta^{\epsilon_i})$ converges in distribution to some $(\bar X_\Lambda,\zeta)$ in $(C_T H^{-\sigma} \times C_T C^1_x) \times C_T C^1_x$. Now we write
\begin{multline*}
	\abs{\esp{\Phi(\bar X^{\epsilon_i})} - \esp{\Phi(\bar X)}} \leq \abs{\esp{\Phi(\bar X^{\epsilon_i})} - \esp{\Phi(\bar X\stoppedLzi)}}\\
	+ \abs{\esp{\Phi(\bar X\stoppedLzi)} - \esp{\Phi(\bar X)}}.
\end{multline*}

First, we have
\begin{equation*}
	\abs{\esp{\Phi(\bar X^{\epsilon_i})} - \esp{\Phi(\bar X\stoppedLzi)}} \lesssim_\Phi \proba{\tauLzei \leq T}.
\end{equation*}
By \cref{lem:tauL_continuity,lem:tauLze_as_continuity}, since $\Lambda \in \mathfrak L$, $\tauLzei \wedge 2T$ converges in distribution to $\tau_\Lambda(\zeta') \wedge 2T$. Then, by Portmanteau's Theorem for closed sets, we have
\begin{equation*}
	\limsup_i \proba{\tauLzei \leq T} \leq \proba{\tau_\Lambda(\zeta') \leq T}.
\end{equation*}

Since $\Phi$ is a continuous bounded function, we have
\begin{equation*}
	\lim_i \abs{\esp{\Phi(\bar X\stoppedLzi)} - \esp{\Phi(\bar X)}} = \abs{\esp{\Phi(\bar X_\Lambda)} - \esp{\Phi(\bar X)}}.
\end{equation*}
Recall that $\bar X_\Lambda \stackrel{d}{=} S_\Lambda(Y_\Lambda)$, and that $Y_\Lambda \stackrel{d}{=} \bar X$ (by \cref{subsec:identification_limit_point}). Thus, we get
\begin{align*}
	\abs{\esp{\Phi(\bar X_\Lambda)} - \esp{\Phi(\bar X)}}
		&= \abs{\esp{\Phi(S_\Lambda(Y_\Lambda))} - \esp{\Phi(Y_\Lambda)}}\\
		&\lesssim_\Phi \proba{\tau_\Lambda(Y_\Lambda) \leq T}.
\end{align*}
Since $\tau_\Lambda(Y_\Lambda) \stackrel{d}{=} \tau_\Lambda(\bar X_\Lambda) = \tau_\Lambda(\zeta')$ by \cref{lem:zeta_lambda_rho_lambda}, we finally get for $\Lambda \in \mathfrak L$
\begin{equation*}
	\limsup_{\epsilon \to 0} \abs{\esp{\Phi(\bar X^{\epsilon})} - \esp{\Phi(\bar X)}} \lesssim_\Phi \proba{\tau_\Lambda(\zeta') \leq T}.
\end{equation*}
Since $\zeta' \in C_T C^1_x$, we have $\proba{\tau_\Lambda(\zeta') \leq T} \xrightarrow[\Lambda \to \infty]{} 0$. Recall that we can take this limit since $\mathcal L$ contains arbitrarily large $\Lambda$'s. Therefore, we have
\begin{equation*}
	\esp{\Phi(\bar X^{\epsilon})} \xrightarrow[\epsilon \to 0]{} \esp{\Phi(\bar X)}.
\end{equation*}
This concludes the proof that $\bar X^\epsilon$ converges in distribution to $\bar X$, and in particular that $\rho^\epsilon$ converges in distribution to $\rho$ in $C_T H^{-\sigma}_x$.

\section{Strong convergence} \label{sec:strong_tightness}

In this section, we establish the second convergence result stated in \cref{thm:main_result}, namely the convergence in $L^2_T L^2_x$. Given \cref{sec:convergence} and \cref{prop:tightness}, it is sufficient to prove that the sequence $\seq{\rho\stoppedL}{\epsilon>0}$ is tight in $L^2_T L^2_x$.

Recall that $w_\rho$ denotes the modulus of continuity of a $H^{-\sigma}_x$-valued continuous process $\rho$. Then, using Theorem 5 in \cite{simon1987compact}, the set
\begin{equation*}
	K_R \doteq \set{\rho \in L^2_T L^2_x \mid \norm{\rho}_{L^2_T H^{\sigma'}_x} \leq R \mbox{ and } \forall \delta > 0, w_\rho(\delta) < \eta(\delta)}
\end{equation*}
where $R>0$, ${\sigma'}>0$ and $\eta(\delta) \xrightarrow[\delta \to 0]{} 0$, is compact in $L^2_T L^2_x$. Using Prokhorov's Theorem, the tightness of $\seq{\rho\stoppedL}{\epsilon>0}$ in $L^2_T L^2_x$ will follow if we prove that for all $\eta > 0$, there exists $R > 0$ and ${\sigma'} > 0$ such that
\begin{equation} \label{eq:strong_tightness_claim2}
	\lim_{\delta \to 0} \limsup_{\epsilon \to 0} \proba{w_{\rho\stoppedL}(\delta) > \eta} = 0,
\end{equation}
and
\begin{equation} \label{eq:strong_tightness_claim1}
	\sup_\epsilon \proba{\norm{\rho\stoppedL}_{L^2_T H^{\sigma'}_x} > R} < \eta.
\end{equation}

Equation \cref{eq:strong_tightness_claim2} is a direct consequence of \cref{eq:aldous}. It remains to prove \cref{eq:strong_tightness_claim1}. Owing to the Markov Inequality, it is sufficient to prove that, for some ${\sigma'} > 0$, we have
\begin{equation} \label{eq:strong_tightness_claim1bis}
	\sup_\epsilon \esp{\norm{\rho\stoppedL}_{L^2_T H^{\sigma'}_x}} \lesssim_\Lambda 1.
\end{equation}
Let $g^\epsilon = \epsilon \partial_t f^\epsilon + \scald{a(v)}{\nabla_x f^\epsilon}$. Owing to \cref{hyp:a2}, we can use an averaging lemma (Theorem 2.3 in \cite{bouchut1999averaging} with $f(t) = f^{\epsilon}(\epsilon t)$, $g(t) = g^{\epsilon}(\epsilon t)$ and $h = 0$ until the time $T \wedge \tauLe$) and by rescaling the time, we get
\begin{align*}
	\norm{\rho\stoppedL}_{L^2_T H^{1/4}_x}^2
		&= \int_0^{T \wedge \tauLe} \norm{\rho\stoppedL(t)}_{H^{\sigma^*/4}_x}^2 dt\\
		&\lesssim \epsilon \norm{f_0^\epsilon}_{L^2_x}^2 + \int_0^{T \wedge \tauLe} \norm{f\stoppedL(t)}_{\fSet}^2 dt + \int_0^{T \wedge \tauLe}\norm{g\stoppedL(t)}_{\fSet}^2 dt,
\end{align*}
where, using the Cauchy-Schwarz inequality,
\begin{align*}
	\norm{g\stoppedL(t)}_{\fSet}
		&= \norm{f\stoppedL(t) \bar m\stoppedL(t) + \frac 1 \epsilon Lf\stoppedL(t)}_{\fSet}\\
		&\leq \norm{f\stoppedL(t)}_{\fSet} \norm{\bar m\stoppedL(t)}_{L^2_x} + \frac 1 \epsilon \norm{Lf\stoppedL(t)}_{\fSet}.
\end{align*}
Then \cref{hyp:f0,eq:taue_bounds_m,prop:L2_bound} lead to \cref{eq:strong_tightness_claim1bis} with $\sigma' = \frac{\sigma^*} 4$. Since the sets $K_R$ are compacts, Prokhorov's Theorem yields, using \cref{eq:strong_tightness_claim1,eq:strong_tightness_claim2}, that $\seq{\rho\stoppedL}{\epsilon>0}$ is tight in $L^2_T L^2_x$.

Given \cref{sec:convergence}, this concludes the proof of the convergence in distribution of $\rho^\epsilon$ in $L^2_T L^2_x$ to $\rho$ the solution of \cref{eq:limit_equation}.

\bibliographystyle{plain}
\bibliography{fullbiblio}
\end{document}